\newcommand{\ubar}[1]{\underaccent{\bar}{#1}}
\newcommand{\EE}{\mathbb{E}}
\newcommand{\mm}{m}
\newcommand{\cadlag}{càdlàg }
\newcommand{\banachk}{\mathbb{D}^k([0, \eta])}
\newcommand{\FF}{\mathscr{F}}
\newcommand{\vv}{\mathrm{v}}
\newcommand{\F}{\mathcal{F}}
\newcommand{\QQ}{Z}
\newcommand{\R}{\mathbb{R}}
\newcommand{\N}{\mathbb{N}}
\newcommand{\1}{\mathbb{1}}
\newcommand{\logloss}[1]{\ensuremath{\mathscr{L}_{t,#1}}}
\newcommand{\intlogloss}[1]{\ensuremath{\bar{\mathscr{L}}_{#1}}}
\newcommand{\tmax}{\tau}
\newcommand{\minus}{-}
\newcommand{\white}{\color{white}}
\newcommand{\logit}{\text{logit}}
\newcommand{\expit}{\text{expit}}
\newcommand{\argmax}{\text{argmax}}
\newcommand{\argmin}{\text{argmin}}
\newcommand{\eps}{\varepsilon}
\newcommand\independent{\protect\mathpalette{\protect\independenT}{\perp}}\def\independenT#1#2{\mathrel{\rlap{$#1#2$}\mkern2mu{#1#2}}}
\newtheorem{thm}{Theorem}
\newtheorem{cor}{Corollary}
\newtheorem{defi}{Definition}
\newtheorem{lemma}{Lemma}
\newtheorem{remark}{Remark}
\newtheorem{assumption}{Assumption}
\def\:{\hskip0pt}
\begin{document}

\title{Continuous-time targeted minimum loss-based estimation of intervention-specific mean outcomes}
  % \runtitle{Continuous-time TMLE}
\author{\small Helene C. W. Rytgaard$^{1,*}$, 
Thomas A. Gerds$^{1}$, and Mark J. van der Laan$^{2}$ \\
\small $^{1}$Section of Biostatistics, University of Copenhagen, Denmark\\ \small$^{2}$Devision of Biostatistics, University  of California, Berkeley}
  
% \address{H. C. Rytgaard, T. A. Gerds\\
  % Section of Biostatistics \\
  % University of Copenhagen \\ 
  % Øster Farimagsgade 5 \\
  % 1014 København K\\
  % Denmark \\
  % \printead{e1}\\\phantom{E-mail: } 
  % \printead*{e3}}

% \address{M. J. van der Laan\\
  % Division of Biostatistics\\
  % and\\
  % Center for Targeted Machine \\
  % Learning and Causal Inference\\
  % 101 Haviland Hall \\
  % Berkeley, California, 94720 \\
  % USA \\
  % \printead{e2}\\
  % \phantom{E-mail:\ }}

\maketitle
\begin{abstract}
  \noindent This paper studies the generalization of the targeted
  minimum loss-based estimation (TMLE) framework to estimation of
  effects of time-varying interventions in settings where both
  interventions, covariates, and outcome can happen at subject-specific
  time-points on an arbitrarily fine time-scale. TMLE is a general
  template for constructing asymptotically linear substitution
  estimators for smooth low-dimensional parameters in
  infinite-dimensional models. Existing longitudinal TMLE methods are
  developed for data where observations are made on a discrete
  time-grid.

  We consider a continuous-time counting process model where intensity
  measures track the monitoring of subjects, and focus on a
  low-dimensional target parameter defined as the
  intervention-specific mean outcome at the end of follow-up. To
  construct our TMLE algorithm for the given statistical estimation
  problem we derive an expression for the efficient influence curve
  and represent the target parameter as a functional of intensities
  and conditional expectations. The high-dimensional nuisance
  parameters of our model are estimated and updated in an iterative
  manner according to separate targeting steps for the involved
  intensities and conditional expectations.

  The resulting estimator solves the efficient influence curve
  equation. We state a general efficiency theorem and describe a
  highly adaptive lasso estimator for nuisance parameters that allows
  us to establish asymptotic linearity and efficiency of our estimator
  under minimal conditions on the underlying statistical model.
\end{abstract}

% \begin{keyword}
  % \kwd{targeted minimum loss-based estimation (TMLE)}
  % \kwd{time-varying confounding} \kwd{continuous-time interventions}
  % \kwd{semiparametric model} \kwd{efficient estimation} \kwd{causal
    % inference.}
% \end{keyword}

\section{Introduction}
\label{sec:introduction}

We consider a continuous-time longitudinal data structure of
\(n \in \N\) independent and identically distributed observations of a
multivariate process on a bounded interval of time \([0,\tmax]\) with
distribution \(P_0\) belonging to a semiparametric statistical model
\(\mathcal{M}\). We are interested in assessing the effect of
interventions on an outcome of interest under interventions that can
happen at arbitrary points in time and are subject to time-dependent
confounding. Our focus is on the construction of an asymptotically
efficient substitution estimator of intervention-specific mean
outcomes represented as a parameter
\(\Psi \, : \, \mathcal{M} \rightarrow \R\).

In all that follows, we use the words `intervention' and `treatment'
synonymously. Recent developments in the field of causal inference
have produced numerous methods to deal with effects of time-varying
treatments in presence of time-varying confounding
\citep{robins1986new,robins1987addendum,
  robins1989analysis,robins1989control,
  robins1992estimation,robins1998marginal,robins2000marginal,robins2000marginala,robins2000robust,bang2005doubly,
  robins2008estimation,van2010targeted,van2010targetedII,petersen2014targeted}.
These methods deal with settings with a fixed number of time-points at
which subjects of a population are all measured and can be intervened
upon.

In the present work we consider a continuous-time model, utilizing a
counting process framework \citep{andersen2012statistical} where
intensity processes define the rate of a finite number of continuous
monitoring times for each subject conditional on their observed
history, see Figure \ref{fig:time:illustration}. Our approach is
closely related to the work of \cite{lok2008statistical} and of
\cite{roysland2011martingale,roysland2012counterfactual}, who propose
continuous-time versions of structural nested models and marginal
structural models
\citep{robins1989analysis,robins1989control,robins1992estimation,robins1998marginal,robins2000robust,robins2000marginal,robins2000marginala,robins2008estimation},
respectively, using counting processes and martingale theory.  Our
parameter \(\Psi(P_0)\) is defined via the g-computation formula
\citep{robins1986new} in terms of interventions on the product
integral representing the data-generating distribution.

To estimate \(\Psi(P_0)\), we proceed on the basis of the targeted
minimum loss-based estimation (TMLE) framework \citep{van2006targeted,
  van2011targeted, van2018targeted}. TMLE is a general methodology for
constructing regular and asymptotically linear substitution estimators
for smooth low-dimensional parameters in infinite-dimensional models,
combining flexible ensemble learning and semiparametric efficiency
theory in a two-step procedure. The earliest of the TMLE developments
for estimation of effects of time-varying treatments in longitudinal
data structures (LTMLE) involve full likelihood estimation and
targeting
\citep{van2010targeted,van2010targetedII,stitelman2012general} whereas
the later \citep{van2012targeted,petersen2014targeted} are based on
the techniques of sequential regression originating from
\cite{bang2005doubly}. All these methods rely on a discrete
time-scale.

To construct our continuous-time TMLE algorithm we derive an
expression for the efficient influence curve.  The efficient influence
curve is the canonical gradient of the functional \(\Psi\) and is a
central component in the construction of locally efficient estimators
of a target parameter in general
\citep{bickel1993efficient}. Semiparametric efficiency theory yields
that, given a statistical model and a target parameter, a regular and
asymptotically linear estimator is efficient if and only if its
influence curve is equal to the efficient influence curve. Our
estimation procedure is based on a representation of the target
parameter in terms of intensities and conditional expectations for
which we need initial estimators and a targeting updating
algorithm. We propose such a targeting algorithm based on separate
targeting steps for the intensities and for the conditional
expectations that are iterated until convergence.  The resulting
estimator solves the efficient influence curve equation.

Our TMLE relies on initial estimators for the intensities and the
conditional expectations that constitute our nuisance parameters. The
TMLE framework allows us to take advantage of flexible and
data-adaptive nuisance parameter estimation through super learning
\citep{van2007super}. A super learner is based on a library of
candidate estimators for each nuisance parameter, and uses
cross-validation to select the best combination of estimators. The
general oracle inequality for cross-validation shows that the super
learner performs asymptotically as well as the best combination of
estimators in the library \citep{van2003unified,van2006oracle}. In
particular, we discuss the highly adaptive lasso (HAL)
\citep{van2017generally} for all likelihood components. If this HAL
estimator is included in the library of the super learner used for
initial estimation, we can show that our TMLE estimator is
asymptotically linear and efficient under minimal conditions on the
model \(\mathcal{M}\).

\begin{figure}
  \centering \includegraphics[width=1\textwidth,angle=0]
  {./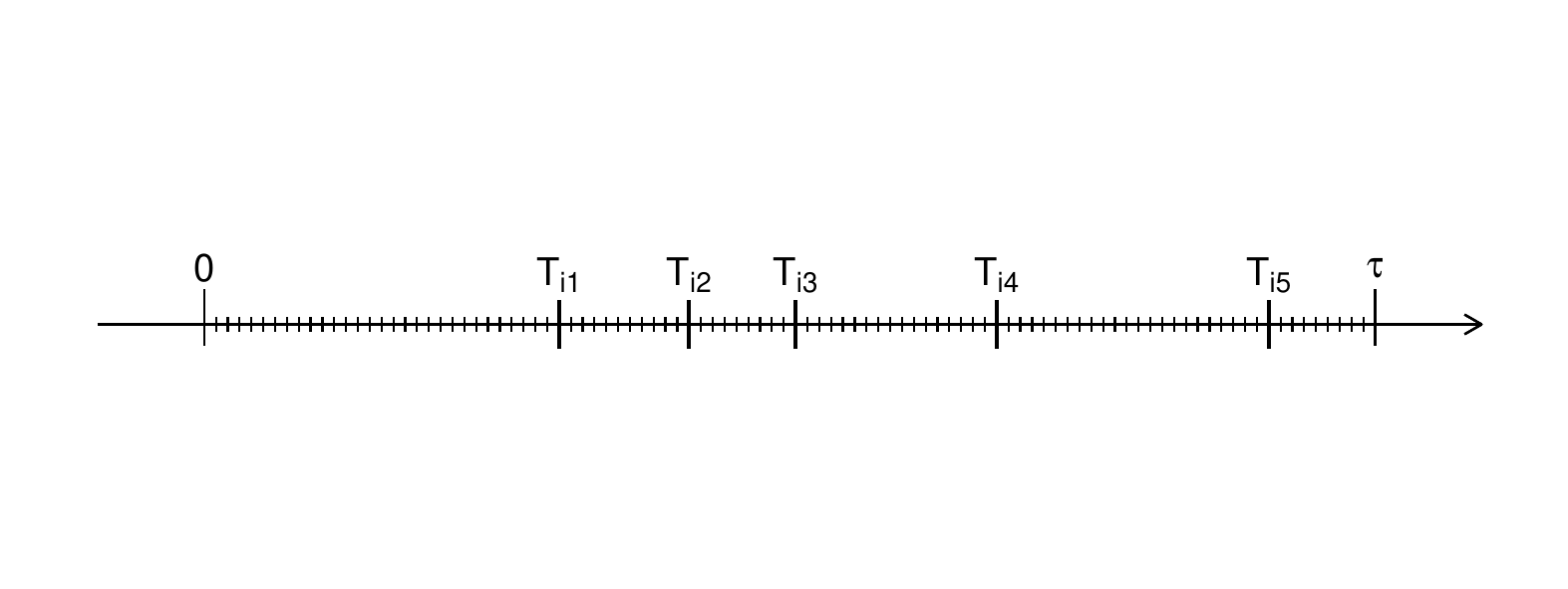}
  \caption{Observations measured on a continuous scale, with five
    random time-points \(T_{i1}, \ldots, T_{i5}\) between time \(0\)
    and time \(\tau\), referred to as monitoring times, where actual
    changes for the given subject \(i\) are measured.}
  \label{fig:time:illustration}
\end{figure}

Our methods extend the existing longitudinal TMLE (LTMLE) to the
continuous\hyp{}time case. While the theory and asymptotic performance
of LTMLE remain valid on any arbitrarily fine time-scale as long as it
is discrete, we note the following. Sequential regression based LTMLE
works by iterating through a sequence of regressions across all
time-points and has been a popular choice over the full
likelihood-based LTMLE that requires modeling of densities of
potentially high-dimensional covariates.  Our framework provides a
unified methodology that covers both the continuous and the
discrete-time case, where intensities of monitoring times are modeled
separately and the regression approach can be used to deal with
high-dimensional covariates.  The substantial difference between LTMLE
and our continuous-time TMLE lies in the limitation of LTMLE that one
needs sufficiently many events at each time-point to fit the
regressions. In contrast, our method can be applied when there are
monitoring times with few events, or just one event.

\subsection{Motivating applications}
\label{sec:motivating:example}

The methods developed here are applicable to a large variety of
problems in pharmacoepidemiology. In this field of research, hazard
ratios are often used as measures of the association of time-dependent
exposure with time-to-event outcomes \cite[see,
e.g.,][]{andersen2013trimethoprim,karim2016comparison,kessing2019depression}.
However, the interpretation of hazard ratios as the measure of causal
treatment effects is hampered for many reasons
\citep{hernan2010hazards,martinussen2018subtleties}. 
Furthermore, the time-dependent Cox model cannot be used in the
presence of time-dependent confounding
\citep{robins1986new}.  In many applications, it is thus
of great interest to formulate and estimate statistical parameters
under hypothetical treatment interventions that have a causal
interpretation under the right assumptions.

Specifically, it may be of interest to assess the % causal
effect of a (dynamic) drug treatment regime on the \(\tau\)-year risk
of death. As an example, let \(N^a(t)\) denote the process counting
visits to a medical doctor who can prescribe the drug. Let the process
\(A(t)\) be information on the type and dose level of the drug
prescribed at patient-specific times
\(T_1^a < T_2^a< \ldots < T^a_{N^{a}({\tmax})}\) during the study
period \([0,\tau]\). Further, let \(N^{\ell}(t)\) denote the process
counting doctor visits where health information \(L(t)\) is
collected. At visit \(T^{a}_{k}\), the doctor considers the baseline
characteristics \(L_0\) of the patient, the treatment so far
\(\{(N^{a}(t), A(t)):t\in[0,T^{a}_{k-1}]\}\), and evaluates changes of
covariates \(\{(N^{\ell}(t), L(t)):t\in[0,T^{a}_k)\}\) to decide to
continue or to change the treatment. We are interested in the result
of a hypothetical experiment that we would have liked to have
conducted but did not. In our example, the causal effect resulting
from such a hypothetical experiment could be the difference of the
\(\tau\)-year risk of death under different (dynamic) treatment
regimes.

A treatment regime is any a priori defined rule which can be applied
at doctor visits during the study period to decide if the treatment
should be changed or continued. For example, the intervention could
state that the patient stays on an initially randomized drug
throughout the entire study period. A dynamic treatment regime is an
example of an adaptive intervention which allows the decision to
depend on the current history of the patient.  The rule could be that
whenever the measurement of a blood marker value exceeds a certain
threshold the dose level of the drug should be adapted. By comparison
of the effects of different regimes we may further learn optimal
strategies for treatment interventions based on patients' medical
past.  In this article, we focus on interventions of the treatment
process keeping the doctor visits where they naturally occur.

The data analysis is complicated by the fact that the observed data
are subject to time-dependent confounding: At any point in time,
doctors and patients make treatment decisions for reasons depending on
treatment and covariate history. Furthermore, past treatment may
further influence future values of covariates, such as biomarkers and
diagnoses, and future treatment decisions. Additionally, a proportion
of subjects in the population may be lost to follow-up (censored)
which can likewise depend on prior treatment decisions and covariate
values. Our estimation framework allows us to control for the history
of all observed time-varying covariates and treatment choices that we
believe could be predictive of both treatment decisions and censoring,
and of the risk of death.  Importantly, in the case where neither the
treatment decision nor the censoring mechanism depend on any
unrecorded health information, i.e., the observed history at any point
in time is sufficient to predict the next treatment decision and the
censoring at that time, the no unmeasured confounding assumption
holds, and the estimated effects of hypothetical interventions based
on the observed data can be interpreted causally.

\subsection{Organization of article}

The article is organized as follows. We first define the statistical
estimation problem. Section \ref{sec:setting:notation} introduces the
continuous-time longitudinal setting and presents the model of a
single subject along with the likelihood. In particular, Section
\ref{sec:interventions} defines interventions on the likelihood,
Section \ref{sec:target:parameter} defines the target parameter, and
Section \ref{sec:eff:influence:function} provides the efficient
influence function for the estimation problem.  In Section
\ref{sec:causal}, we briefly review the assumptions under which the
target parameter identifies the causal effect of interest.  Section
\ref{sec:target:parameter:parametrization} presents a representation
of the target parameter in terms of intensities and conditional
expectations for which we need to construct an estimation
procedure. Section \ref{sec:eff:thm:tmle} states a general theorem for
asymptotically efficient substitution estimation.  Section
\ref{sec:motivating:tmle} introduces targeted minimum loss-based
estimation (TMLE) for the considered estimation problem. Section
\ref{sec:initial:estimation} describes initial estimation of the
likelihood components needed for the targeting updating steps and for
estimation of the target parameter that fulfills the criteria of the
efficiency theorem.  Section \ref{sec:TMLE} presents a particular TMLE
algorithm for estimation of the target parameter in the
continuous-time setting that involves separate targeting for
conditional expectations and intensities, pooled over time.  Section
\ref{sec:TMLE:inference} reviews inference for the TMLE estimator.
Section \ref{sec:simulation:study} presents the results of a
simulation study as a demonstration of the methods and a
proof-of-concept. Section \ref{sec:discussion:future} closes with a
discussion.

\section{Formulation of the statistical estimation problem}
\label{sec:setting:notation}

We start out defining the data structure, the statistical estimation
problem and the target parameter.  An overview of our notation can be
found in Appendix \ref{app:sub:overview:notation}.

\subsection{Notation and setup}

We represent the subject-specific information in terms of a counting
process model \citep{andersen2012statistical}. Suppose \(n\in\N\)
subjects of a population are followed in a bounded interval of time
\([0, \tmax]\). Let \((N^a ,N^{\ell} , N^c, N^d)\) be a multivariate
counting process generating random times at which treatment,
covariates, censoring status and survival status may change. At jump
times of \(N^a\) we observe changes in a treatment regime \(A(t)\)
taking values in finite set \(\mathcal A\) and at jump times of
\(N^{\ell}\) we observe changes of a covariate vector \(L(t)\) with
values in a compact subset of \(\R^d\). The processes \(N^c\) and
\(N^d\) generate changes in censoring status and death status,
respectively. Furthermore, \(L_0\) denotes a vector of baseline
covariates measured at time \(0\), and \(Y\) a real valued outcome
variable measured at time \(\tmax\). We assume that there are no
events at time zero, and that realizations of all processes are càdlàg
functions on \([0,\tmax]\). Let \((\F_t)_{t\ge 0 }\) denote the
filtration generated by the history of the observed processes up to
time \(t\). Specifically, we have \(\F_0=\sigma(L_0)\).

Let \(T_1^a < T_2^a< \ldots < T^a_{N^{a}({\tmax})}\) and
\(T_1^{\ell} < T_2^{\ell}< \ldots < T^{\ell}_{N^{\ell}({\tmax})} \) be
the random times at which the treatment regime \(A\) and the covariate
process \(L\) may change, and let \(T^c\) and \(T^d\) be the
right-censoring time and the survival time,
respectively. By definition, \(N^{a}(t)\) and
\(N^{\ell}(t)\) are the subject-specific counts of treatment and
covariate monitoring times in \([0,t]\).  The actual end
of follow-up for a subject is \(\min(T^c,T^d,\tmax)\) and the total
number of unique event times before time \(t\) is,
\begin{equation}
  K(t)=
  \# \big\lbrace\lbrace T_1^a, \ldots, T^a_{N^{a}(t)}\rbrace \cup \lbrace T_1^{\ell},
  \ldots, T^{\ell}_{N^{\ell}(t)}\rbrace \cup  \lbrace T^c, T^d\rbrace\big\rbrace.
    \label{eq:K:t}
\end{equation}
For each subject, the number of change points of the multivariate
counting process on \([0,\tmax]\), \(K=K({\tau})\), is finite.

For \(x=a,\ell,d,c\), we denote by \(\Lambda_0^x\) the cumulative
intensity that characterizes the compensator of \(N^x\). We further
denote by \(M^x= N^x-\Lambda_0^x\) the corresponding
martingale.  Heuristically, we have that
\begin{align*}
  P( N^x(dt)=1 \, \vert \, \F_{t-}) = \EE[ N^x(dt) \, \vert \,
  \F_{t-}]
  = d\Lambda_0^x (t \,
  \vert \, \F_{t-}),
\end{align*}
where the increment \( N^x(dt) \) is non-zero and equal to 1 if and
only if there is a jump of \(N^x\) in the infinitesimal interval
\([t,t+dt)\) \citep{andersen2012statistical,gill1994lectures}.

We assume that the processes \(A\) and \(L\) only change at monitoring
times. The distribution of the treatment \(A(t)\) at any time \(t\)
where \(N^a(t)\) jumps is denoted
\(\pi_{0,t}(a \,|\, \F_{t-} ) =P(A(t)=a \,|\, \F_{t-})\),
\(a\in\mathcal{A}\), and the distribution of covariates \(L(t)\) at
any time \(t\) where \(N^{\ell}(t)\) jumps is characterized by a
conditional density \(\mu_{0,t}(\ell \, \vert\, \F_{t-} ) \) with
respect to a dominating measure \(\nu_L\).  We assume that
\(A(t)=A(t-)\) and \(L(t)=L(t-)\) otherwise.  Lastly, the probability
distribution of baseline covariates \(L_0\) is characterized by the
conditional density \(\mu_{0,L_0}\) with respect to a dominating
measure \(\nu_{L_0}\).

\subsubsection{Observations}
\label{sec:obs}

For subject \(i\), with \(i=1,\dots,n\) independent subjects, let
\(\{T_{i,k}\}_{k=1}^{K_{i}(t)}\) denote the ordered set of unique
event times up to time \(t\). The observed data for subject \(i\) in a
bounded interval \([0,t]\), \(t \le \tau\), is given by:
\begin{align}
  \bar{O}_i(t) = \big\lbrace \big(L_{0,i}, s, N_i^a(s),A_i(s),N_i^{\ell}(s)
  ,L_i(s),N_i^d(s),N_i^c(s)\big) \, : \, s\in \{T_{i,k}\}_{k=1}^{K_{i}(t)}
  \big\rbrace.
  \label{eq:observed:data}
\end{align}
We also use the shorthand notation \(O_i=\bar{O}_i(\tmax)\) and denote
by \(\mathcal{O}\) the space where \( O_i\) takes its values. Let
\( \mathbb{P}_n\) denote the empirical distribution of the data
\(\{O_i\}_{i=1}^n\).  For a dataset with \(n\) observations, we
use,
\begin{align}
  \qquad 0={t}_0 <{t}_1 < \cdots <{t}_{{K}_n} ,
  \label{eq:observed:event:times}
\end{align}
with \({K}_n= \sum_{i=1}^n K_{i}\), to denote the ordered sequence of
unique times of changes
\(\cup_{i=1}^n \cup_{k=1}^{K_{i}}\{T_{i,k}\} \).

The distribution \(P_0\) of the observed data \(O\) factorizes
according to the time-ordering, going from one infinitesimal time
interval to the next with \( d\Lambda_0^x (t \, \vert \, \F_{t-})\)
representing the conditional probability of an event of \(N^x\) in
\([t,t+dt)\), for \(x=a, \ell, d,c\) \citep[][Section
II.7]{andersen2012statistical}. Accordingly, we express the likelihood
as
\begin{align}
  \begin{split} 
    dP_0(O) = \qquad\qquad\qquad\qquad\qquad\qquad\qquad\qquad
    \qquad\qquad\qquad\qquad\qquad\qquad\qquad \\ \mu_{0,L_0}(L_0)
    \Prodi_{t \in (0,\tmax]} \big( d\Lambda_0^{\ell} (t \, \vert \,
    \F_{t-}) \, \mu_{0,t} (L(t)\, \vert \, \F_{t-})
    \big)^{N^{\ell}(dt)} \big( 1-d\Lambda_0^{\ell} (t \, \vert \,
    \F_{t-})  \big)^{1-N^{\ell}(dt)}\\
    \Prodi_{t \in (0,\tmax]} \big( d\Lambda_0^{a} (t \, \vert \,
    \F_{t-})  \, \pi_{0,t} (A(t)\, \vert \, \F_{t-}) \big)^{N^{a}(dt)}
    \big( 1-d\Lambda_0^{a} (t \, \vert \,
    \F_{t-}) \big)^{1-N^{a}(dt)}\\
    \Prodi_{t \in (0, \tmax]} \big(d\Lambda_0^{c} (t \, \vert \,
    \F_{t-})\big)^{N^c(dt)} \big( 1 - d\Lambda_0^{c} (t \, \vert \,
    \F_{t-}) \big)^{1-N^c(dt)} \\
    \Prodi_{t \in (0, \tmax]} \big(d\Lambda_0^{d} (t \, \vert \,
    \F_{t-})\big)^{N^d(dt)} \big( 1 - d\Lambda_0^{d} (t \, \vert \,
    \F_{t-}) \big)^{1-N^d(dt)},
    \end{split}\label{eq:like}
\end{align}
with \( \prodi\) denoting the product integral \citep[][Section
II.6]{gill1990survey,andersen2012statistical}. A
particularly nice aspect of the product-integral representation is
that it gives a unified presentation for the discrete and the
continuous time case.

\subsection{Interventions}
\label{sec:interventions}

We are interested in estimating the effect of dynamic treatment
regimes
\citep{robins2002analytic,murphy2001marginal,hernan2006comparison,van2007causal}
corresponding to hypothetical experiments, under hypothetical
interventions, where data had been generated differently. An
intervention defines a rule specifying treatment at each intervention
time point given the data so far.  In our setting, we allow the number
and schedule of the intervention time-points to be subject-specific
and to occur in continuous time. We thus distinguish between
interventions that control the treatment decision mechanism, but not
the conditional distribution governing the schedule of the
intervention time points, and interventions that control the
distribution of treatment decisions as well as the distribution of the
intervention times.   In our motivating
pharmacoepidemiological applications from Section
\ref{sec:motivating:example}, an intervention only on the treatment
decision mechanism could for example specify a treatment regime where
a particular drug treatment should be continued or discontinued, while
the frequency of doctor visits are kept as they would naturally occur
for the subjects of the population had they followed that treatment
regime.  In addition to the treatment regimes of interest,
interventions always control the censoring mechanism, such that, in
the hypothetical experiment, all subjects are followed for the entire
study period \([0,\tau]\).

The observed data are generated by the distribution \(P_0\) which
factorizes as displayed in \eqref{eq:like}.  We define interventions
directly on \(P_0\), by replacing a subset of its components by an
intervention-specific choice. To formulate this, we decompose the
observed data distribution \(P_0\) into two parts which we refer to as
the interventional part (\(G_0\)) and the non-interventional part
(\(Q_0\)), respectively. We parametrize \(P_0\) accordingly,
\begin{align*}
  d{P}_0=d{P_{Q_0,G_0}} = \Prodi_{t \in [0, \tmax]} dQ_{0,t} \, dG_{0,t},
\end{align*}
with \(dG_{0,t}(o)\) and \(dQ_{0,t}(o)\) denoting the conditional
measures, given the observed history, corresponding to the
interventional part and the non\hyp{}interventional part at time
\(t\), respectively.  We use \(g_{0,t}\) to denote the density of
\(G_{0,t}\) and likewise \(q_{0,t}\) to denote the density of
\(Q_{0,t}\), both with respect to appropriate dominating measures.

Now, an intervention involves replacing \(G_0\) by some \(G^*\)
encoding how treatment and censoring is generated conditional on the
available history in the hypothetical experiment.  In its generality,
this is what is referred to as a \textit{randomized plan} in
\citet[][Sections 6 and 7]{gill2001causal}, or a \textit{stochastic}
intervention \citep{robins2004effects,dawid2010identifying}, but it
includes \textit{static} and \textit{dynamic} interventions
\citep{hernan2006comparison,chakraborty2013statistical} plans as
special cases as we explain below.  Which components we include in
\(G_0\), and thus intervene upon, depends on what kinds of effects we
are interested in and thus what scientific question we wish to
address. Consider the following options.

\begin{defi}[Interventions on treatment assigned]
  An intervention on treatment assigned involves replacing
  \(\pi_{0,t}\) by some choice \(\pi_{t}^*\). Thus, the interventional
  part includes the treatment and the censoring mechanism:
  \begin{align}
  \begin{split}
  dG_{0,t}(O) = \big(  \pi_{0,t} (A(t)\, \vert \, \F_{t-})
  \big)^{N^{a}(dt)} \big(d\Lambda_0^{c} (t \, \vert \,
  \F_{t-}) \big)^{N^c(dt)} \\
  \big( 1 - d\Lambda_0^{c} (t \, \vert \,
  \F_{t-}) \big)^{1-N^c(dt)}.
  \end{split}
  \label{eq:g:0}
\end{align}
The intervention prevents censoring and specifies the treatment regime
\(\pi_t^*\), such that
\begin{equation*}
  dG_t^*(O) = \big( 1- N^c(t)) \,\pi_{t}^* (A(t)\, \vert \, \F_{t-}).
\end{equation*}
The distribution of the intervention times is not intervened upon,
i.e., \(\Lambda_0^{a}\) is included in the non-interventional part.
\label{defi:treatment:assigned}
\end{defi}

When the interventional treatment distributions are degenerated and,
for example, set to a single value \(a^* \in\mathcal{A}\) throughout
the entire study period,
\begin{align}
  \pi_{t}^* (A(t)\, \vert \, \F_{t-}) = \1 \lbrace A(t)=a^*\rbrace .
  \label{eq:static} 
\end{align}
We refer to \(\pi_{t}^*\) as a static intervention since it
deterministically sets \(A(t)=a^*\) (at treatment monitoring times). A
dynamic intervention, on the other hand, defines \(\pi^*_t\) that
assigns \(A(t)\) deterministically dependent on the subject's observed
past.

\begin{defi}[Intervention on treatment and schedule]
  The interventional part includes the treatment, the schedule of
  intervention times, and the censoring mechanism:
  \begin{align*}
  \begin{split}
  dG_{0,t}(O) =  \big( d\Lambda_0^{a} (t \, \vert \,
    \F_{t-})  \, \pi_{0,t} (A(t)\, \vert \, \F_{t-}) \big)^{N^{a}(dt)}
    \big( 1-d\Lambda_0^{a} (t \, \vert \,
    \F_{t-}) \big)^{1-N^{a}(dt)} \\
    \big(d\Lambda_0^{c} (t \, \vert \,
  \F_{t-}) \big)^{N^c(dt)} 
  \big( 1 - d\Lambda_0^{c} (t \, \vert \,
  \F_{t-}) \big)^{1-N^c(dt)}.
  \end{split}
\end{align*}
The intervention prevents censoring and specifies a treatment regime
by \(\pi_t^*\) and \(\Lambda^{a,*}\), such that,
\begin{align*}
  dG^*_{t}(O) = \big( d\Lambda^{a,*} (t \, \vert \,
    \F_{t-})  \, \pi^*_{t} (A(t)\, \vert \, \F_{t-}) \big)^{N^{a}(dt)}
    \big( 1-d\Lambda^{a,*} (t \, \vert \,
    \F_{t-}) \big)^{1-N^{a}(dt)}.
\end{align*}
 \label{defi:treatment:times}
\end{defi}
An intervention on the schedule of treatment decisions involves
replacing the intensity \(\Lambda_0^a\) by some choice
\(\Lambda^{a,*}\). In the context of the applications described in
Section \ref{sec:motivating:example}, this could be to decrease or
increase the frequency of doctor visits, for instance to ensure at
least a monthly visit.

To focus our presentation, we continue with \(dG_{0,t}\) as defined by
\eqref{eq:g:0} according to Definition
\ref{defi:treatment:assigned}, considering interventions only on
the treatment decision \(\pi_t\) and on the censoring mechanism
\(\Lambda^c\).  Note that this defines the non-interventional part as
\begin{align*}
  dQ_{0,t}(O) &=  \big( d\Lambda_0^{a} (t \, \vert \,
                \F_{t-}) 
                \big)^{N^{a}(dt)} \big( 1-d\Lambda_0^{a} (t \, \vert \,
                \F_{t-}) \big)^{1-N^{a}(dt)}\\
              & \qquad  \big( d\Lambda_0^{\ell} (t \, \vert \,
                \F_{t-}) \, \mu_{0,t} (L(t)\, \vert \,
                \F_{t-}) \big)^{N^{\ell}(dt)} \big(
                1-d\Lambda_0^{\ell} (t \, \vert \,
                \F_{t-}) \big)^{1-N^{\ell}(dt)}\\
              &\qquad  \big(d\Lambda_0^{d} (t \, \vert \,
                \F_{t-})\big)^{N^d(dt)} \big( 1 - d\Lambda_0^{d} (t \, \vert \,
                \F_{t-}) \big)^{1-N^d(dt)},
\end{align*}
for \(t>0\).

\subsection{Statistical model}
\label{sec:statistical:model}

Let \(\mathcal{Q}\) denote the parameter set for the
non\:-\:interventional part \(Q= ( \mu_{t}, {\Lambda}^{a}(t),\)
\(\Lambda^{\ell}(t), \Lambda^d(t) )_{t\in [0,\tau]}\) and
\(\mathcal{G}\) the parameter set for the interventional part
\(G= ( \pi_{t}, {\Lambda}^{c}(t) )_{t\in [0,\tau]}\). We consider a
statistical model \(\mathcal{M}\) as follows:
\begin{align}
  \mathcal{M} = \bigg\lbrace P \, : \, dP = dP_{Q,G}= \Prodi_{t \in [0, \tmax]}  dQ_t dG_t ,\,  \, 
  G \in \mathcal{G}, \, Q \in \mathcal{Q}\bigg\rbrace . 
  \label{eq:statistical:model}
\end{align}
In Section \ref{sec:initial:estimation} we summarize results that
require \(Q,G\) to be contained in the set of \cadlag functions, i.e.,
functions that are right-continuous with left-hand limits, with
bounded sectional variation norm.  For now we leave
\(\mathcal{Q},\mathcal{G}\) unspecified.

\subsection{Target parameter}
\label{sec:target:parameter}

Suppose an intervention \(G^*\) is given.  We define the
post-interventional distribution for any \(P\in\mathcal{M}\) by
replacing \(G\) by \(G^*\) in \(P_{Q,G}\). The resulting \(P_{Q,G^*}\)
is commonly referred to as the \textit{g-computation formula}
\citep{robins1986new,gill2001causal}. We will also denote this by
\(P^{G^*}\). Based on the data \(O=\bar{O}(\tmax)\), our overall aim
is to estimate the expectation of the outcome of interest \(Y\) under
the g-computation formula. That is, we are interested in the parameter
\(\Psi^{G^*} \, : \, \mathcal{M} \rightarrow \R\) given by
\begin{align}
  \Psi^{G^*} (P)  = \EE_{P^{G^*}} \big[ Y \big]
  = \int_{{\mathcal{O}}} y \, \Prodi_{t \in [0, \tmax]} dQ_{ t}(o) \, dG_t^*(o),
  \label{eq:target:parameter}
\end{align}
where the notation \(\EE_{P^{G^*}}\) refers to the expectation
operator with respect to the post-interventional measure
\(P^{G^*}= P_{Q,G^*}\).

In this paper we focus on an all-cause mortality outcome,
\(Y= {N}^d(\tmax)\), so that \eqref{eq:target:parameter} is the
expected risk of dying by time \(\tau\) under the distribution defined
by the g-computation formula. We emphasize, however, that in principle
\(Y\) can be defined as any mapping of the observed past \(\F_{\tau}\)
as long as \(Y\) takes value in a compact set. We denote
by \(\psi_0 := \Psi^{G^*} (P_0)\), the true value of the target
parameter.  The target parameter is identifiable from the observed
data under the following positivity assumption.

\begin{assumption}[Positivity]
  We assume absolute continuity of \(P_{Q,{G}^{*}}\) with respect to
  \(P_{Q,{G}}\), i.e., \(P_{Q,{G}^{*}} \ll P_{Q,{G}}\). This implies
  existence of the Radon-Nikodym derivative
  \(dP_{Q,{G}^{*}} / dP_{Q,{G}} = \prodi_{s < \tau} dG^*_s /
  dG_s\). 
  \label{ass:identifiability}
\end{assumption}

The g-computation formula arising from replacing the observed \(G_0\)
by \(G^*\) and the resulting target parameter in
\eqref{eq:target:parameter} are well-defined statistical quantities by
Assumption \ref{ass:identifiability}.

\subsubsection{Causal parameter and causal interpretability}
\label{sec:causal}

Causal interpretability of the g-computation formula in the setting of
discrete data is provided by Robins' work
\citep{robins1986new,robins1987addendum,robins1989analysis} under the
assumptions of no unmeasured confounding and positivity \cite[for a
nice review, see,][Part III]{hernanrobins}. This work is further
generalized by \cite{gill2001causal} to continuously varying
covariates and treatments. Our setting differs from theirs in that
subjects are measured at random times on a continuous scale: Our
g-computation formula is represented as a product integral over times
where something actually happens, whereas the g-computation formula of
\cite{gill2001causal} consists of a finite product over times of a
discrete grid.  Nevertheless, note that the counting processes only
have finitely many changes in the compact time interval \([0,\tau]\),
and that interventions on the treatment decision are in fact only
applied at a finite number of (random) times.  Thus, the `traditional'
causal assumptions as stated by \cite{gill2001causal} can be applied
at the random times to ensure the causal interpretation of the
g-computation formula \citep[][Theorem
2]{gill2001causal}.

Particularly, consider \(G^*\) that defines an intervention on the
treatment assigned according to a distribution \(\pi^*_t\). Let
\(O^{G^*}\) be the counterfactual random variable representing the
data that would have been observed, had \(G^*\) been adhered to rather
than the factual \(G\) throughout the follow-up period. The causal
parameter of interest is \(\EE [ Y^{G^*}]\), the expected value of
\(Y\) when imposing the intervention \(G^*\). For our setting with
random treatment monitoring times,
\(T_1^a < T_2^a< \ldots < T^a_{N^{a}({\tmax})}\), we formulate the no
unmeasured confounding assumption for the treatment decision as
follows:

\begin{assumption}[No unmeasured confounding]
  \(Y^{G^*} \! \independent  A(T^a_k) \, \vert \, \bar{O}(T^a_k-)\), for all
  \(k=1,\ldots, N^{a}({\tmax})\). 
  \label{ass:sra}
\end{assumption}

Assumption \ref{ass:identifiability} and Assumption \ref{ass:sra}
together yield that our target parameter identifies the expected value
of the counterfactual outcome, i.e.,
\(\Psi^{G^*} (P) =\EE [ Y^{G^*}]\), had the treatment decisions been
governed by \(\pi^*_t\).

In settings where one intervenes not only on the treatment decisions
but also on the mechanism of the timing of treatment monitoring, an
additional assumption equivalent to Assumption \ref{ass:sra} is
needed.  Specifically, causal interpretability of \(\Psi^{G^*} (P) \)
is in that case achieved if the counterfactual outcome \(Y^{G^*}\) is
independent of the time to the next treatment monitoring event
conditional on the observed history up to any event time \(T_k\).

\subsection{Canonical gradient}
\label{sec:eff:influence:function}

The canonical gradient of the pathwise derivative of the target
parameter characterizes the information bound of the estimation
problem relative to the statistical model \(\mathcal{M}\)
\citep{bickel1993efficient}.  The canonical gradient is also known as
the \textit{efficient influence curve}.  The following theorem
provides a representation of the canonical gradient for our
statistical model \(\mathcal{M}\) and target parameter
\(\Psi^{G^*} \, : \, \mathcal{M} \rightarrow \R\).  We will use this
representation to construct an asymptotically efficient estimator.

We follow a general recipe to derive our expression for the canonical
gradient. For this, consider the submodel
\(\mathcal{M}_G \subset \mathcal{M}\) that assumes \(G\) to be known
and let \(\mathscr{T}_G(P)\) denote the tangent space at \(P\) in the
submodel \(\mathcal{M}_G\). The canonical gradient can be found as the
projection of any other gradient of the pathwise derivative of
\(\Psi \, : \, \mathcal{M}_G\rightarrow \R\) at \(P\) onto the tangent
space \(\mathscr{T}_G(P)\) \citep{vanRobins2003unified}. Thus, we need
to characterize the tangent space \(\mathscr{T}_G(P)\) and construct
an initial gradient to project onto \(\mathscr{T}_G(P)\). Note that
the tangent space \(\mathscr{T}_G(P)\) is generated by fluctuations
only of the conditional distributions of \(Q\) since \(G\) is
known. To construct an initial gradient, we can use the influence
curve of an inverse probability weighted estimator of \(\psi_0\) in
\(\mathcal{M}_G\). The detailed derivations can be found in Appendix
\ref{app:canonical:gradient}.

\begin{thm}[Canonical gradient] The canonical gradient \(D^*(P)\) at
  \(P\) in \(\mathcal{M}\) can be represented as follows:
\begin{align*}
  &D^*(P) =  \EE_{P^{G^*}} [ Y \, \vert \,  \F_0] - \Psi^{G^*} (P) \\
  & \,\,\, + \,  \int_0^{\tmax}\Prodi_{s <t } \frac{dG^*_s}{dG_s}
    \Big(  \EE_{P^{G^*}} [Y \, \vert\,  L(t), N^{\ell}(t)
    , \F_{t-}] -
    \EE_{P^{G^*}} [Y \, \vert\,  N^{\ell}(t),
    \F_{t-}] \Big) N^{\ell}(dt)   \\
  & \, + \, 
    \int_0^{\tmax}\Prodi_{s <t } \frac{dG^*_s}{dG_s}
    \Big(\EE_{P^{G^*}} [Y \, \vert\, \Delta N^{\ell}(t)=1, \F_{t-}]  - 
    \EE_{P^{G^*}} [Y \, \vert\,  \Delta N^{\ell}(t)=0,  \F_{t-}]\Big)
    M^{\ell}(dt) 
  \\
  & \, + \, 
    \int_0^{\tmax}\Prodi_{s <t } \frac{dG^*_s}{dG_s}
    \Big(  \EE_{P^{G^*}} [Y \, \vert\, \Delta  N^{a}(t)=1, \F_{t-}] - 
    \EE_{P^{G^*}} [Y \, \vert\, \Delta N^{a}(t)=0,  \F_{t-}]\Big)
    M^{a}(dt) 
  \\
  & \, + \, 
    \int_0^{\tmax}\Prodi_{s <t } \frac{dG^*_s}{dG_s}
    \Big(
    1 -  \EE_{P^{G^*}} [Y \, \vert\,  N^{d}(t)=0,  \F_{t-}] \Big)
    M^{d}(dt) . 
\end{align*}
We here follow notation of \cite{andersen2012statistical} and use
\(\Delta\) to denote the difference operator defined by
\(\Delta X = X-X_{-} \) for a \cadlag process \(X\).
\label{thm:eff:ic}
\end{thm}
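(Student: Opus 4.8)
The plan is to follow the recipe sketched just before the theorem statement: reduce to the submodel $\mathcal{M}_G$ in which the interventional part $G$ is treated as known, characterize the tangent space $\mathscr{T}_G(P)$ there, exhibit one gradient (the influence curve of an IPW estimator), and then project that gradient onto $\mathscr{T}_G(P)$ to obtain the canonical gradient; finally I would argue that the resulting object is in fact a gradient in the full model $\mathcal{M}$ as well, so it is the efficient influence curve. The reason the first two parts suffice is the standard fact (invoked in the excerpt via \citet{vanRobins2003unified}) that the canonical gradient in $\mathcal{M}_G$ is the projection of any gradient onto the closure of $\mathscr{T}_G(P)$, and since $G$ is ancillary for $\Psi$, a gradient in $\mathcal{M}_G$ supported on scores of $Q$ only is automatically a gradient in $\mathcal{M}$.

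First I would set up one-dimensional fluctuations of each component of $Q = (\mu_{L_0}, \mu_t, \Lambda^a, \Lambda^\ell, \Lambda^d)$ and read off the tangent space. Because $dP_{Q,G}$ factorizes as a product integral over infinitesimal intervals, the log-likelihood splits additively into a baseline score (a function of $L_0$ with mean zero under $\mu_{L_0}$), scores for the conditional covariate densities $\mu_t$ at jump times of $N^\ell$, and martingale-type scores for each counting process $N^x$, $x=a,\ell,d$, of the form $\int_0^\tau h^x(t,\F_{t-})\,M^x(dt)$ for predictable $h^x$. So $\mathscr{T}_G(P)$ is the $L^2_0(P)$-closed orthogonal sum of: the baseline space $\{h_0(L_0):\EE[h_0\mid\F_0]=0\}$; the covariate-update spaces $\{\int_0^\tau h^\ell_\mu(t,L(t),\F_{t-})N^\ell(dt): \EE[h^\ell_\mu\mid N^\ell(t),\F_{t-}]=0\}$; and the three martingale spaces $\{\int_0^\tau h^x(t,\F_{t-})M^x(dt)\}$ for $x=a,\ell,d$. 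The orthogonality of these pieces (covariate innovations vs.\ timing innovations, and martingales for different $N^x$, which have no common jumps) is what makes the projection computable componentwise.

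Next I would construct the initial gradient. The natural choice is the influence curve of the inverse-probability-of-treatment-and-censoring weighted (IPW) estimator of $\psi_0$ in $\mathcal{M}_G$, namely $D_{\mathrm{IPW}}(P) = \bigl(\prodi_{s<\tau} dG^*_s/dG_s\bigr) Y - \Psi^{G^*}(P)$, whose mean-zero property and pathwise-derivative property under fluctuations of $Q$ follow from Assumption~\ref{ass:identifiability} (the Radon--Nikodym derivative exists) together with the product-integral form of the likelihood. Then the main work is the projection: for each of the five coordinate spaces above I would compute $\Pi(D_{\mathrm{IPW}}\mid \cdot)$ by the usual conditional-expectation formula for martingale integral spaces (projection onto $\{\int h^x\,M^x\}$ replaces the integrand by the appropriate conditional expectation of the increments of $D_{\mathrm{IPW}}$ given $\F_{t-}$ and the value of $\Delta N^x(t)$), and similarly for the baseline and covariate pieces. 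Telescoping the product integral $\prodi_{s<t}dG^*_s/dG_s$ out of the conditional expectations — using that this weight is $\F_{t-}$-measurable and that, under the change of measure, conditional expectations of $Y$ become $\EE_{P^{G^*}}[Y\mid\cdot]$ — is what produces the five displayed terms: the baseline term $\EE_{P^{G^*}}[Y\mid\F_0]-\Psi^{G^*}(P)$, the covariate-value term against $N^\ell(dt)$, and the three martingale terms against $M^\ell(dt)$, $M^a(dt)$, $M^d(dt)$, with the last one simplifying because $Y=N^d(\tau)$ forces $\EE_{P^{G^*}}[Y\mid \Delta N^d(t)=1,\F_{t-}]=1$.

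I expect the main obstacle to be the bookkeeping in the change-of-measure/telescoping step: carefully justifying that one may pull the cumulative weight $\prodi_{s<t}dG^*_s/dG_s$ through the conditional expectations and that the resulting conditional expectations of $Y$ under $P$, reweighted, equal the post-interventional conditional expectations $\EE_{P^{G^*}}[Y\mid\cdot]$ at each information level ($\F_0$; $(N^\ell(t),\F_{t-})$; $(L(t),N^\ell(t),\F_{t-})$; $(\Delta N^x(t),\F_{t-})$). This requires a clean sequential/iterated-expectation argument exploiting the time-ordering of the product-integral factorization and the positivity of Assumption~\ref{ass:identifiability}; the martingale-projection formulas themselves and the orthogonal decomposition of the tangent space are comparatively routine once the fluctuation model is written down. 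A secondary technical point is handling the distinction between jumps of $N^\ell$ carrying a covariate value and mere timing, i.e.\ the split of the $N^\ell$ contribution into the term against $N^\ell(dt)$ (covariate innovation) and the term against $M^\ell(dt)$ (timing innovation); I would treat this by decomposing the $N^\ell$-part of the tangent space into the covariate-density subspace and the intensity subspace and projecting onto each. The full details are deferred to Appendix~\ref{app:canonical:gradient}.
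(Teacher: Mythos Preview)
Your proposal is correct and follows essentially the same route as the paper's proof in Appendix~\ref{app:canonical:gradient}: the paper also works in the submodel $\mathcal{M}_G$, takes the IPW influence curve $D^{G^*}_{\mathrm{IPW}}(P)=\prodi_{t\le\tau}\frac{dG^*_t}{dG_t}\,Y-\Psi^{G^*}(P)$ as initial gradient, decomposes $\mathscr{T}_G(P)$ as the orthogonal sum of the baseline, covariate-density, and three intensity tangent spaces (Lemmas~\ref{lemma:tangent:q0}--\ref{lemma:tangent:lambda}), projects componentwise (Lemmas~\ref{lemma:project:L0}--\ref{lemma:project:lambda}), and then pulls the $\F_{t-}$-measurable weight $\prodi_{s<t}dG^*_s/dG_s$ out and uses the change-of-measure identity $\EE\big[\prodi_{s\ge t}\frac{dG^*_s}{dG_s}\,Y\,\big\vert\,\cdot\,\big]=\EE_{P^{G^*}}[Y\mid\cdot]$ to obtain the displayed terms. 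Your anticipation of the split of the $N^\ell$ contribution into a covariate-value piece against $N^\ell(dt)$ and a timing piece against $M^\ell(dt)$, and of the simplification $\EE_{P^{G^*}}[Y\mid\Delta N^d(t)=1,\F_{t-}]=1$, matches the paper exactly.
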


\section{Representation of the target parameter by iterated
  expectations}
\label{sec:target:parameter:parametrization}

Estimation of the target parameter requires evaluation of a large
integral.  We here present a parametrization of the target parameter
in terms of a nested sequence of conditional expectations which is
central for our estimation procedure. As for the discrete-time
analogue \citep{bang2005doubly,robins2000robust}, the parametrization
is defined backwards through time, starting at the end of the study
period \(\tau\).  The main difference to the discrete-time
representation is that the time-points where events happen are random.

The notation that we present in this section will be used throughout
the remainder of the paper.  For \(P\in\mathcal{M}\) and a fixed
regime \(G^*\) according to Definition \ref{defi:treatment:assigned},
we define for \(t\in [0,\tau]\):
\begin{align}
  Z_{t}^{G^*}& :=
               \EE_{P^{G^*}} \big[  Y \, \big\vert
               \, L(t),  N^{\ell}(t),N^a(t),
               N^d(t), \F_{t-}\big] =
               \int
               Y \,\Prodi_{s \ge t} dG^*_s\, \Prodi_{s > t}dQ_s  .
               \label{eq:def:Zt}
\end{align}
Note that the conditioning set of \(Z_{t}^{G^*}\) excludes the
interventional part at time \(t\).  We further denote the conditional
expectation where \(L(t)\) has been integrated out by:
\begin{align}
  \begin{split}
    Z_{t,L(t)}^{G^*} := & \, \EE_{P^{G^*}} \big[ Y \, \big\vert \,
    N^{\ell}(t),N^a(t),
    N^d(t), \F_{t-}\big] \\[0.3em]
    =& \,\EE_{P^{G^*}} \big[ Z_{t}^{G^*} \, \big\vert \,
    N^{\ell}(t),N^a(t),
    N^d(t), \F_{t-}\big]\\
    =& \, \int \bigg( \int Y \,\Prodi_{s \ge t} dG^*_s\, \Prodi_{s >
      t}dQ_s \bigg) d\mu_{0,t} (L(t) \, \vert \,\F_{t-}). \end{split}
               \label{eq:def:Zt:Lt}
\end{align} 
The notation using the subscript `\(L(t)\)' in \(Z_{t,L(t)}^{G^*}\) to
refer to \(L(t)\) being integrated out follows the notation of
\cite{van2012targeted}.

\begin{lemma} 
  Define, for any \(P\in\mathcal{M}\):
\begin{align}
  {\bm{Z}} =  {\bm{Z}}(P) :=  \big( {Z}^{G^*}_{t}, {Z}^{G^*}_{t,L(t)},
  {\Lambda}^{\ell}(t), {\Lambda}^{a}(t), {\Lambda}^{d}(t)
  \big)_{t\in [0,\tmax]}  . 
  \label{eq:rel:part:Z}
\end{align}
Particularly, let \(\bm{Z}_0 = {\bm{Z}}(P_0) \).  The target parameter
\(\Psi^{G^*}\) defined in \eqref{eq:target:parameter} can be
represented as a functional of \(P\) only through \(\bm{Z}\).
\label{lemma:representation:Psi:Z}
\end{lemma}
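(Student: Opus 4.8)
The plan is to establish the claim by a backward induction over the finitely many event times, showing at each step that the relevant conditional expectation under $P^{G^*}$ depends on $P$ only through the components listed in $\bm{Z}$. First I would observe that, since each realization has at most $K=K(\tau)<\infty$ change points and the intervention $G^*$ acts only at the (random, finitely many) treatment monitoring times, the product integral in \eqref{eq:target:parameter} is in fact a finite product along the ordered event times $t_1<t_2<\cdots<t_K$. Thus $\Psi^{G^*}(P) = \EE_{P^{G^*}}[Y]= \EE_{P^{G^*}}\big[\EE_{P^{G^*}}[Y\mid\F_0]\big]$, and the inner conditional expectation is obtained by iterating the tower property through these event times, alternately integrating against $dG^*_s$ (the interventional conditional laws, which are fixed and known, not part of $P$) and against $dQ_s$ (the non-interventional conditional laws).

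Next I would make the induction explicit. Starting at $\tau$ with $Y={N}^d(\tau)$, define the running conditional expectation process and peel off one event time at a time. At a treatment monitoring time $T^a_k$ we integrate $A(T^a_k)$ against $\pi^*$, which introduces no dependence on $P$; at a covariate monitoring time $T^{\ell}_k$ we integrate $L(T^{\ell}_k)$ against $\mu_{0,t}$, and by \eqref{eq:def:Zt} and \eqref{eq:def:Zt:Lt} this step is exactly the passage from $Z_t^{G^*}$ to $Z_{t,L(t)}^{G^*}$ — both of which are listed in $\bm{Z}$. The remaining "propagation" between consecutive event times is governed by whether the next event is of type $\ell$, $a$, or $d$, and by the waiting-time distribution, i.e. by the cumulative intensities $\Lambda^{\ell}$, $\Lambda^a$, $\Lambda^d$ — again precisely the components appearing in \eqref{eq:rel:part:Z}. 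Crucially, the censoring intensity $\Lambda^c$ and the treatment decision law $\pi_{0,t}$ do not enter, because they are part of $G$ and are overwritten by $G^*$ (which is fixed), so they never contribute to $\Psi^{G^*}(P)$. Chaining these observations over the finitely many event times shows that $\EE_{P^{G^*}}[Y\mid\F_0]$, and hence $\Psi^{G^*}(P)=\int \EE_{P^{G^*}}[Y\mid\F_0]\,d\mu_{0,L_0}$, is a functional of $P$ only through $\bm{Z}(P)$; note that $\mu_{0,L_0}$ is also part of $P$ but one can either include it implicitly (the outer expectation over $\F_0=\sigma(L_0)$ is over the factual baseline law, which is fixed once $P$ is fixed) or absorb the statement into "$\bm{Z}$ together with the baseline law", consistent with how $\bm{Z}_0=\bm{Z}(P_0)$ is used downstream.

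The main obstacle I anticipate is bookkeeping rather than anything deep: one must be careful about the conditioning sets, in particular that $Z_t^{G^*}$ conditions on $(L(t),N^{\ell}(t),N^a(t),N^d(t),\F_{t-})$ — i.e. it includes the outcome of the type-$\ell$/type-$a$ jump at $t$ but excludes the interventional draw of $A(t)$ at $t$ — so that the recursion closes correctly on the pair $(Z_t^{G^*},Z_{t,L(t)}^{G^*})$ and one genuinely needs both in $\bm{Z}$. A clean way to present this is to fix a realization, enumerate its event times $t_1<\cdots<t_K$, and write the finite telescoping product for $\EE_{P^{G^*}}[Y\mid\F_0]$ along that realization, reading off at each factor which element of $\bm{Z}$ is being invoked; taking expectations over the baseline covariates then yields the representation. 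I would also remark that Assumption \ref{ass:identifiability} guarantees the Radon–Nikodym derivative $\prodi_{s<\tau}dG^*_s/dG_s$ is well defined, so all the conditional expectations under $P^{G^*}$ in \eqref{eq:def:Zt}–\eqref{eq:def:Zt:Lt} make sense.
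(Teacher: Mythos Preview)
Your proposal is correct and follows essentially the same route as the paper's proof: both arguments fix $P\in\mathcal{M}$, write $\Psi^{G^*}(P)$ as iterated integrals along the (finitely many) event times, and use backward induction (the tower property) to show that passing from $Z_{t_r}^{G^*}$ to $Z_{t_{r-1}}^{G^*}$ requires only $Z_{t_r,L(t_r)}^{G^*}$, the intensities $\Lambda^{\ell},\Lambda^{a},\Lambda^{d}$, and the fixed $G^*$. Your remark that the baseline law $\mu_{L_0}$ appears in the final outer integral is well taken---the paper's proof ends exactly there, with $\Psi^{G^*}(P)=\int Z_{t_0}^{G^*}\,\mu_{L_0}(L_0)\,d\nu_{L_0}$, and downstream this is handled by taking the empirical mean over $L_0$.
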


\begin{proof}
See Appendix \ref{app:proof:of:lemma:Psi:Z}.
\end{proof}

In line with the sequential regression representation of
\cite{bang2005doubly}, we may  thus  utilize that
\(\bm{Z}\) rather than \(P\) itself can be used to evaluate the target
parameter.  Our aim is to estimate \(\bm{Z}\) in an optimal way. The
canonical gradient presented in Theorem \ref{thm:eff:ic} can be
represented as a functional of \(\bm{Z}\) and \(G\); this will guide
the construction of estimators for \(\bm{Z}\) as a result of our
efficiency theorem in Section \ref{sec:eff:thm:tmle}.

\section{Efficiency theorem for substitution estimation}
\label{sec:eff:thm:tmle}

In the previous section we have demonstrated that the target parameter
can be represented as a functional of \(P\in\mathcal{M}\) through
\(\bm{Z}\). We define a substitution estimator of the target parameter
\(\psi_0 = \Psi( P_0)\)  based on an estimator
\(\hat{\bm{Z}}_n\) for \(\bm{Z}_0\).  The following theorem states the
general conditions for asymptotic efficiency of such substitution
estimation of \(\psi_0 \). Particularly, efficient estimation requires
estimation of the relevant parts of the data-generating distribution
that enter the expression of the efficient influence curve. Thus, an
efficient estimator \(\hat{P}_n \) is characterized by both
\(\hat{\bm{Z}}_n\) and \( \hat{G}_n\).  The proof of the theorem
follows proofs in similar work \citep[see, e.g.,][Theorem
A.5]{van2006targeted,van2017generally,van2011targeted}, is short and
relies on an analysis of the separate terms of a von Mises expansion
of the target parameter. In Section \ref{sec:initial:estimation} we
summarize results on initial estimators that meet the regularity
conditions under minimal smoothness conditions on
\(\mathcal{M}\). Throughout we use the notation \(P f = \int f dP\)
and \( \Vert f \Vert_{P} = \sqrt{ P f^2}\).

We show in Appendix \ref{sec:remainder} that the difference
\(\Psi^{G^*}(P) - \Psi^{G^*}(P_0)\) admits the following presentation:
\begin{align}
  \Psi^{G^*}(P) - \Psi^{G^*}(P_0) = - P_0   D^*(P) + R_2(P,P_0), \qquad
  P \in \mathcal{M},
  \label{eq:rep:R2}
\end{align}
where \(D^*(P)\) is the canonical gradient and the second-order
remainder \(R_2(P,P_0) \) is given by:
\begin{align}
  \begin{split}
    & R_2(P,P_0)
    = \Psi^{G^*}(P) - \Psi^{G^*}(P_0) + P_0 D^*(P) \\
    &\,= \int_0^{\tmax} \int_{\mathcal{O}} Y \,
    \frac{1}{{\bar{g}}_{t}} \big( \bar{g}_{0,t} - {\bar{g}}_{t} \big)
    \Prodi_{s \le \tmax} dG_{s}^* \Prodi_{s < t} dQ_{0,s} \,\big( d
    Q_{0,t} - d{Q}_{t} \big) \Prodi_{s >t} d{Q}_{s} .
  \end{split}
      \label{eq:second:order}
\end{align}
Here we have used the notation:
\begin{align*}
  \bar{g}_{t} = \Prodi_{s < t} g_s, \quad
  \text{and,} \quad \bar{g}_{0,t} = \Prodi_{s < t} g_{0,s},
\end{align*}
with \( {g}_{t}\) being the density of the interventional part
\(dG_t\) as defined in Section \ref{sec:interventions}.

\begin{thm}
  Consider an estimator \(\hat{P}_n\) for \(P_0\), such that
\begin{align}
  \mathbb{P}_n D^*(\hat{P}_n
  ) = o_P(n^{-1/2}).
  \label{eq:key:EIC:eq}
\end{align}
If the following conditions 1 and 2 hold true,
\begin{enumerate}[label={Condition }{{\arabic*}}),
  leftmargin=\widthof{[condition xx]}+\labelsep]
\item \(R_2(\hat{P}_n, P_0 ) = o_P(n^{-1/2})\),\vspace{0.2cm}
\item \(D^*(\hat{P}_n )\) belongs to a Donsker class, and
  \(P_0\big( D^*(\hat{P}_n ) - D^*(P_0 )\big)^2\) converges to zero in
  probability,
\end{enumerate}
then
\begin{align*}
  \Psi^{G^*} (\hat{P}
  _n) -  \Psi^{G^*} (P
  _0) =\mathbb{P}_n D^* (P
  _0) + o_P(n^{-1/2}),
\end{align*}
that is, \( \Psi^{G^*} (\hat{P }_n)\) is asymptotically linear at
\(P_0\) with influence curve \(D^*(P_0)\) and is thus asymptotically
efficient among all locally regular estimators at \(P_0\).
\label{thm:eff:estimator}
\end{thm}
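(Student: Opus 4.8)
The plan is to prove this by a standard von Mises / functional-delta argument, combining the exact second-order expansion \eqref{eq:rep:R2} with an empirical-process term control. First I would write the identity
\begin{align*}
  \Psi^{G^*}(\hat P_n) - \Psi^{G^*}(P_0)
  &= -P_0 D^*(\hat P_n) + R_2(\hat P_n, P_0) \\
  &= (\mathbb{P}_n - P_0) D^*(\hat P_n) + R_2(\hat P_n, P_0) - \mathbb{P}_n D^*(\hat P_n),
\end{align*}
which is immediate from \eqref{eq:rep:R2} after adding and subtracting $\mathbb{P}_n D^*(\hat P_n)$. The last term is $o_P(n^{-1/2})$ by the hypothesis \eqref{eq:key:EIC:eq} that the estimator solves the efficient influence curve equation; the middle term $R_2(\hat P_n,P_0)$ is $o_P(n^{-1/2})$ by Condition 1. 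So everything reduces to showing that the empirical process term satisfies
\[
  (\mathbb{P}_n - P_0)\, D^*(\hat P_n) = (\mathbb{P}_n - P_0)\, D^*(P_0) + o_P(n^{-1/2}).
\]

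For this I would invoke the standard empirical-process lemma (e.g. \citet[Lemma 19.24]{bickel1993efficient}, or the version in \citet{van2011targeted}): if a (random) function $\hat f_n$ lies in a fixed $P_0$-Donsker class with probability tending to one and $\|\hat f_n - f_0\|_{P_0} = \|D^*(\hat P_n) - D^*(P_0)\|_{P_0} \to 0$ in probability, then the centered empirical process $(\mathbb{P}_n - P_0)(\hat f_n - f_0)$ is $o_P(n^{-1/2})$. Both requirements are exactly the content of Condition 2. Applying this with $\hat f_n = D^*(\hat P_n)$ and $f_0 = D^*(P_0)$ gives the displayed equality. Substituting back, and noting $P_0 D^*(P_0) = 0$ (the canonical gradient has mean zero under $P_0$, so $(\mathbb{P}_n - P_0) D^*(P_0) = \mathbb{P}_n D^*(P_0)$), yields
\[
  \Psi^{G^*}(\hat P_n) - \Psi^{G^*}(P_0) = \mathbb{P}_n D^*(P_0) + o_P(n^{-1/2}),
\]
which is precisely asymptotic linearity with influence curve $D^*(P_0)$. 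Efficiency among regular estimators then follows from the convolution theorem of \citet{bickel1993efficient}, since $D^*(P_0)$ is by construction the canonical gradient (Theorem \ref{thm:eff:ic}), i.e. the efficient influence curve relative to $\mathcal{M}$.

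The genuinely substantive content has been pushed into the two conditions, so the proof of the theorem itself is short; the main obstacle in the argument is really establishing \eqref{eq:rep:R2}–\eqref{eq:second:order}, i.e. verifying that the von Mises remainder $R_2$ takes the stated double-robust product form. This is where the structure of the model enters: one expands $\Psi^{G^*}(P) - \Psi^{G^*}(P_0) + P_0 D^*(P)$ using the product-integral factorization $dP_{Q,G} = \prodi_t dQ_t\, dG_t$, telescopes the difference $\prodi_s dQ_{0,s} - \prodi_s dQ_s$ term by term over the ordered event times (exactly as in the discrete sequential-regression case of \citealp{bang2005doubly}), and identifies the cross terms with the ratio $\bar g_{0,t}/\bar g_t - 1$ arising from the density of $D^*(P)$ against $P_0$. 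Since the excerpt defers this to Appendix \ref{sec:remainder}, for the purpose of proving Theorem \ref{thm:eff:estimator} I would simply cite \eqref{eq:rep:R2} and focus the write-up on the three-line decomposition and the Donsker argument above.
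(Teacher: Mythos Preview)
Your proposal is correct and follows essentially the same argument as the paper: start from the von Mises expansion \eqref{eq:rep:R2}, add and subtract $\mathbb{P}_n D^*(\hat P_n)$, dispose of $R_2$ and $\mathbb{P}_n D^*(\hat P_n)$ via Condition~1 and \eqref{eq:key:EIC:eq}, and then invoke the Donsker/empirical-process lemma (the paper cites it as \citet[Lemma~19.24]{van2000asymptotic}, not Bickel et al.) to replace $D^*(\hat P_n)$ by $D^*(P_0)$ in the empirical-process term. Your explicit remark that $P_0 D^*(P_0)=0$ is a small clarification the paper leaves implicit.
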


\begin{proof} 
  The proof of the theorem relies on the expansion
  \eqref{eq:rep:R2}. Applying the representation at \(\hat{P}_n^*\)
  and using Equation \eqref{eq:key:EIC:eq} from the theorem yields
\begin{align*}
  & \Psi^{G*}(\hat{P
    }_n) - \Psi^{G*}(P
    _0)
    =  {(\mathbb{P}_n - P_0) D^*(\hat{P
    }_n
    )} +
    R_2 (\hat{P
    }_n
    , P
    _0
    ) + o_P(n^{-1/2})
    . 
\end{align*}
It is now a result of empirical process theory \citep[see,
e.g.,][Lemma 19.24]{van2000asymptotic} that condition 2 implies
\begin{align*}
  {(\mathbb{P}_n - P_0) \big( D^*(\hat{P
  }_n
  ) -
  D^*({P
  }_0
  ) \big)}= o_P(n^{-1/2}),
\end{align*}
which finishes the proof. \end{proof}

If we are able to construct estimators \(\hat{P}_n \) that meet the
conditions of Theorem \ref{thm:eff:estimator} and solve Equation
\eqref{eq:key:EIC:eq}, the so-called efficient influence curve
equation, then the resulting substitution estimator
\(\Psi^{G^*}(\hat{P }_n)\) is asymptotically linear and efficient. In
the following, we comment on conditions 1 and 2.

\begin{remark}[Second-order remainder]
  The second-order remainder expressed in \eqref{eq:second:order}
  displays a double robustness structure \citep{vanRobins2003unified}
  in the sense that,
\begin{align*}
  R_2(P,P_0)
  =0 \quad \text{if} \quad
  \bar{g}_{t}= \bar{g}_{0,t} \quad \text{or} \quad q_t = q_{0,t}
  .
\end{align*}
When \( {\bar{G}}_{t}\) is bounded away from zero, the product
structure of the remainder \( R_2(P,P_0) \) yields, by use of the
Cauchy-Schwartz inequality, an upper bound in terms of the
\(L_2(P_0)\)-norm of \(( \bar{g}_{0,t} - {\bar{g}}_{t} ) \) and the
\(L_2(P_0)\)-norm of \(( {q}_{0,t} - {{q}}_{t} ) \):
\begin{align*}
  \int_0^{\tau} \Vert  \bar{g}_{0,t} - \bar{g}_{t} \Vert_{P_0} \,
  \Vert  q_{0,t} - q_{t} \Vert_{P_0} 
  \,   dt
  .
\end{align*}
Here we have further used that \( Y \in [0,1]\).  The required
convergence rate \(R_2 (\hat{P }_n , P _0 ) = o_P(n^{-1/2})\) will for
example be achieved if we estimate both parts at a rate faster than
\(o_P(n^{-1/4})\). In Section \ref{sec:initial:estimation} we apply
recent results on highly adaptive lasso (HAL) estimation
\citep{van2017generally} to show that such estimators do exist.
\label{remark:R2}
\end{remark}

\begin{remark}[Donsker class conditions]
  In Section \ref{sec:initial:hal} we give conditions under which
  \(D^*(\hat{P}_n ) \in \mathscr{F}\) where
  \( \mathscr{F} = \lbrace D^*(P) \, : \, P \rbrace\) is a Donsker
  class.  The key is that there is a finite number of
  monitoring times per subject, so that any subject contributes with
  finitely many terms to the likelihood.  Then the canonical
  gradient can be written as a well-behaved mapping of the nuisance
  parameters such that Donsker properties of the nuisance parameters
  will translate into Donsker properties of the efficient influence
  curve.  An important Donsker class is the class of càdlàg functions
  with finite variation.
\end{remark}

Theorem \ref{thm:eff:estimator} tells us what we need for efficient
estimation of our target parameter.  The next sections deal with
construction of such an estimator.

\section{Targeted minimum loss-based estimation (TMLE)}
\label{sec:motivating:tmle}

We present a TMLE procedure for construction of an estimator that will
satisfy the conditions of Theorem \ref{thm:eff:estimator}.  In
summary, this consists of, first, constructing initial estimators for
the components of \(\bm{Z}\) and \(G\), and, second, setting up an
algorithm for performing an update of the collection of initial
estimators that guarantees that it solves the efficient influence
curve equation \eqref{eq:key:EIC:eq}.

We also refer to the second step as the targeting step or the
targeting algorithm. It involves for each component of \(\bm{Z}\) a
choice of a loss function and a corresponding path indexed by
\(\eps\in\R\) through the initial estimator of that component such
that the generated score at \(\eps=0\) gives a desired part of the
canonical gradient. The general targeting algorithm involves iterative
updating steps that are repeated until convergence, at which point the
efficient influence curve equation \eqref{eq:key:EIC:eq} is solved.

A certain amount of extra notation is needed.  For
\(t \in (0,\tau]\), we define the ``clever weights'' as the
Radon-Nikodym derivative
\begin{align}
  h^{G^*}_t & = \Prodi_{s <t } \frac{dG^*_s}{dG_s},
                 \label{eq:clever:weight}
\end{align}
that only depends on the \(G\)-part of the likelihood, and the
``clever covariates'',
\begin{align}
  h^{\ell}_t & =
               \EE_{P^{G^*}} [Y \, \vert\, \Delta N^{\ell}(t)=1, \F_{t-}]
               \label{eq:h:ell}- 
               \EE_{P^{G^*}} [Y \, \vert\, \Delta N^{\ell}(t)=0,  \F_{t-}],
  \\
  h^{a}_t & =  \label{eq:h:a}
            \EE_{P^{G^*}} [Y \, \vert\,  \Delta N^{a}(t)
            =1, \F_{t-}]-
            \EE_{P^{G^*}} [Y \, \vert\, \Delta N^{a}(t)=0,  \F_{t-}]
            ,
  \\
  h^{d}_t &= 
            1 -  \EE_{P^{G^*}} [Y \, \vert\,  N^{d}(t)=0,  \F_{t-}] ,
            \label{eq:h:d}
\end{align}
that only depend on \(\bm{Z}\). This now allows us to write the
canonical gradient as,
\begin{align}
  & D^*(P)
    = Z^{G^*}_{t=0}- \Psi^{G^*} (P)\notag\\
  & \qquad + \, \int_0^{\tmax}h^{G^*}_t \big( Z_{t}^{G^*} -
    Z_{t,L(t)}^{G^*}\big) dN^{\ell}(t) + \sum_{x\in\lbrace a, \ell,
    d\rbrace} \int_0^{\tmax}h^{G^*}_t\, h^{x}_t \, dM^{x}(t).
  \label{eq:integral:1}
\end{align}
In the following, we define loss functions and one-dimensional
parametric submodels for each component of \(\bm{Z}\) such that the
scores are equal to the respective terms of \eqref{eq:integral:1}.
These will be used to construct our targeting algorithm in Section
\ref{sec:TMLE}. We consider each term of \eqref{eq:integral:1}
separately and define loss functions and submodels for
\(Z^{G^*}_{t,L(t)}\) (Section \ref{ssec:tmle:cond:means}) and for each
of the conditional intensities \(\Lambda^x\), \(x=a, \ell, d\)
(Section \ref{ssec:tmle:intensities}).

Before we do so, we remark upon another particular approach taken in
the discrete-time setting \citep{van2010targeted,stitelman2012general}
that could also be applied to our problem.  Note that, in this paper,
we focus on the parametrization of the target parameter
\( \Psi^{G^*}\) in terms of \(\bm{Z}\) as presented in Section
\ref{sec:target:parameter:parametrization}, but we could instead
construct initial estimators for all likelihood components, i.e., all
conditional densities and intensities, and construct a targeted
updating step for each of these components as is done in the
discrete-time case. Here we focus on \(\bm{Z}\) rather than the full
likelihood so that we can avoid targeting the conditional density of
the potential high-dimensional \(L(t)\). This will make the targeting
step simpler and computationally more efficient.

\subsection{Loss function and submodel for \(Z^{G^*}_{t,L(t)}\)}
\label{ssec:tmle:cond:means}

We need a loss function and a submodel for \(Z^{G^*}_{t,L(t)}\) such
that the score of the submodel equals the first term of
\eqref{eq:integral:1}.  This term consists of an integral over a
difference between \( Z_{t}^{G^*}\) and \(Z_{t,L(t)}^{G^*}\); thus, we
will define our loss function and submodel for \(Z^{G^*}_{t,L(t)}\)
for a given \(Z_{t}^{G^*}\).  Particularly, we define the time-point
specific logarithmic loss for \({\QQ}_{t,L(t)}^{G^*}\), indexed by
\({\QQ}_{t}^{G^*}\):
\begin{equation*}
  \logloss{{\QQ}_{t}^{G^*}}\big({\QQ}_{t,L(t)}^{G^*}\big) =
  {\QQ}^{G^*}_{t} \log {\QQ}^{G^*}_{t, L(t)} +
  \big(1- {\QQ}^{G^*}_{t} \big) \log \big( 1-  {\QQ}^{G^*}_{t, L(t)}\big) .
\end{equation*} 
With this loss function, the parametric submodel
\begin{equation*}
  \logit \,  {\QQ}^{G^*}_{t,L(t)}(\eps) =\logit\,
  {\QQ}^{G^*}_{t,L(t)} + \eps h_t^{G^*}, 
\end{equation*} 
has the desired property that
\begin{equation*}
  \frac{d}{d\eps} \,\logloss{{\QQ}_{t}^{G^*}}\big({\QQ}_{t,L(t)}^{G^*}(\eps)\big)\,\Big\vert_{\eps=0} =
  {h}^{G^*}_t \, \big( {\QQ}^{G^*}_{t} - {\QQ}^{G^*}_{t,L(t)}  \big) . 
\end{equation*}
Correspondingly, we define the integrated loss for
\({\bar{\QQ}}^{G^*}_{\tau, L(\tau)} = ({\QQ}^{G^*}_{s, L(s)} \, : \, 0
\le s \le \tau)\), indexed by
\({\bar{\QQ}}^{G^*}_{\tau} = ({\QQ}^{G^*}_{s} \, : \, 0 \le s \le
\tau)\),
\begin{align*}
  \intlogloss{{\bar{\QQ}}_{\tmax}^{G^*}}\big({\bar{\QQ}}_{\tau, L(\tau)}^{G^*}(\eps)\big) =
  \int_0^{\tmax} \logloss{{\QQ}_{t}^{G^*}}\big({\QQ}_{t,L(t)}^{G^*}(\eps)\big) \, dN^{\ell}(t),
\end{align*}
for which we have that
\begin{align*}
  \frac{d}{d\eps}  \intlogloss{{\bar{\QQ}}_{\tmax}^{G^*}}\big({\bar{\QQ}}_{\tau, L(\tau)}^{G^*}(\eps)\big)
  \,\Big\vert_{\eps=0}
  & =
    \int_0^{\tmax} \frac{d}{d\eps} \,
    \logloss{{\QQ}_{t}^{G^*}}\big({\QQ}_{t,L(t)}^{G^*}(\eps)\big)\,\Big\vert_{\eps=0} \, dN^{\ell}(t)
  \\
  & = \int_0^{\tmax}{h}^{G^*}_t\, \big( {\QQ}^{G^*}_{t} -
    {\QQ}^{G^*}_{t,L(t)}\big) \, dN^{\ell}(t) ,
\end{align*}
which, as we wanted, equals the first term of the part of the
canonical gradient expressed in \eqref{eq:integral:1}.

\subsection{Loss function and submodel for the intensities
  \({\Lambda}^x(t)\)}
\label{ssec:tmle:intensities}

We consider the case that \(\Lambda^x\) is absolutely continuous and
denote by \(\lambda^x\) the intensity rate such that
\(\Lambda^x(t \, \vert \, \F_{t-}) \)
\( = \int_0^t \lambda^x (s \,\vert\, \F_{s-}) ds \).  For each
\(x=a,\ell,d\), we specify a proportional hazard type submodel for
\(\Lambda^x\) with time-dependent covariates as follows:
\begin{align}
  \begin{split}
      d\Lambda^x (t; \eps) & = d\Lambda^x(t) \exp ( \eps
    {h}^{G^*}_t {h}^{x}_{t}),
\end{split}
  \label{eq:lambda:fluc}
\end{align}
together with the partial log-likelihood loss function:
\begin{align*}
  \mathscr{L}_{x}({\Lambda}^x)
  &= \log \bigg( \Prodi_{t\in [0,\tmax]} \big(\lambda^x(t )\big)^{N^x(dt)}
    \big(1- d\Lambda^x(t  )\big)^{1-N^x(dt)} \bigg) \\
  &= \int_0^{\tmax}  \log  \lambda^x(t)\, dN^x(t) -
    \int_0^{\tmax} d\Lambda^x(t )   
    .
\end{align*}
For this pair of submodel and loss function we have the desired
property that
\begin{align*}
  \frac{d}{d\eps}\, \mathscr{L}_{x}({\Lambda}^x (\cdot\,; \eps))\,\Big\vert_{\eps=0}
  & = 
    \frac{d}{d\eps}\, \bigg(\int_0^{\tmax} \Big( \log {\lambda}^x(t)
    +  \eps {h}^{G^*}_t 
    {h}^{x}_{t}\Big)\, dN^x(t) \,  -\\
  &\qquad\qquad\qquad\qquad\quad
    \int_0^{\tmax}  \exp \big( \eps {h}^{G^*}_t 
    {h}^{x}_{t}\big) {d\Lambda}^x(t)   \bigg)\bigg\vert_{\eps=0} 
  \\
  & = \int_0^{\tmax} {h}^{G^*}_t 
    {h}^{x}_{t}\, dN^x(t) -
    \int_0^{\tmax} {h}^{G^*}_t 
    {h}^{x}_{t} \, {d\Lambda}^x(t) \\
  & = \int_0^{\tmax}  {h}^{G^*}_t 
    {h}^{x}_{t} \, \big(  dN^x(t) - d{\Lambda}^x(t) \big),  
\end{align*}
is equal to the corresponding terms of the canonical gradient as
expressed in \eqref{eq:integral:1}.

\section{Initial estimation of nuisance parameters}
\label{sec:initial:estimation}

To carry out our targeting algorithm, we need initial estimators for
the following time-sequences of conditional densities, conditional
expectations, and conditional intensities:
\begin{equation}
  \begin{split}
    G &= \big(
    \pi_{t}, {\Lambda}^{c}(t)  \big)_{t\in [0,\tau]},  \qquad \text{and,}\\
    {\bm{Z}} &= \big( {Z}^{G^*}_{t}, {Z}^{G^*}_{t,L(t)},
    {\Lambda}^{\ell}(t), {\Lambda}^{a}(t), {\Lambda}^{d}(t)
    \big)_{t\in [0,\tmax]} .
  \end{split}
  \label{eq:init:components}
\end{equation}
To establish conditions 1 and 2 of Theorem \ref{thm:eff:estimator}
under as weak as possible restrictions on our model \(\mathcal{M}\),
which in turn yields the asymptotic efficiency of our TMLE estimator,
we propose highly adaptive lasso (HAL) estimation
\citep{van2017generally} for each of the quantities displayed in
\eqref{eq:init:components}.  In the present work, the aim of
introducing HAL estimation is mainly theoretical; we can apply earlier
results established by \cite{van2017generally} on convergence
properties for the HAL estimator under one key assumption that the
nuisance parameters can be parametrized by functions that are \cadlag
with finite sectional variation norm.

We refer to Appendix \ref{app:smoothness:hal} for a more detailed
characterization of HAL estimation and specifically the conditions on
our model \(\mathcal{M}\) that are sufficient to establish asymptotic
efficiency of the TMLE estimator when initial estimators are
constructed using HAL.  The present section gives a short account of
the HAL results that can be summarized as follows:
\begin{enumerate}
\item The nuisance parameters that characterize the statistical model
  \(\mathcal{M}\) can be discontinuous or non-differentiable, we only
  need them to be parametrizable by functions that are \cadlag and
  have finite sectional variation norm. Such functions generate a
  signed measure \citep{gill1995inefficient,van2017generally} and can
  be represented in terms of its measures over sections
  \citep{gill1995inefficient} corresponding to an infinite linear
  combination of indicator basis functions.
\item The HAL estimator is defined as the minimizer of the
  loss-specific empirical risk over the space of \cadlag functions
  with sectional variation norm bounded by a constant.  In practice
  this can be solved by \(L_1\)-penalized (Lasso) regression
  \citep{tibshirani1996regression} where indicator basis functions are
  the covariates and the point-mass assigned to support points are the
  corresponding coefficients. Specifically, the sectional variation
  norm equals the sum of the absolute values of the coefficients,
  i.e., the \(L_1\)-norm of the vector of coefficients.
\item The highly adaptive lasso estimators attain the sufficient
  \(n^{-1/4}\) convergence rate \citep[a rate that has been further
  improved to \(n^{-1/3}\log(n)^{d/2}\) for dimension \(d\)
  by][]{2019arXiv190709244B}.  Thus, by Remark \ref{remark:R2} on the
  double robustness of the estimation problem, the use of HAL for
  estimation of the nuisance parameters in \eqref{eq:init:components}
  allows us to establish condition 1 of Theorem
  \ref{thm:eff:estimator}.
\item The class of functions that are \cadlag and have finite
  sectional variation norm is a Donsker class and the canonical
  gradient is a well-behaved mapping of the nuisance parameters. This
  yields that \(D^*(P ) \in \mathscr{F}\) where
  \( \mathscr{F} = \lbrace D^*(P) \, : \, P \rbrace\) is a Donsker
  class and thus provides the basis to establish condition 2 of
  Theorem \ref{thm:eff:estimator}.
\end{enumerate}

To further optimize the estimation of the quantities in
\eqref{eq:init:components}, we use a super learner that selects the
best performing estimator from a prespecified library of candidate
algorithms by minimizing the cross-validated empirical risk.  The
super learner performs asymptotically no worse than any algorithm
included in its library, a property known as the oracle inequality
\citep{van2003unified,van2006oracle}. Accordingly, a super learner
that includes the HAL estimator in its library will converge  at the
same rate as the HAL estimator. Appendix \ref{app:overview:diagrams}
provides a short description of super learning.

\section{Targeting algorithm}
\label{sec:TMLE}

Based on the loss functions and submodels defined in Sections
\ref{ssec:tmle:cond:means}--\ref{ssec:tmle:intensities}, we here
present a targeting algorithm for updating the collection of initial
estimators for \(\bm Z_0\) for a given estimator \(\hat{G}_n\) for
\(G_0\) on \( [0,\tmax]\). We index the initial estimator for
\(\bm{Z}\) by \(\mm=0\) as follows:
\begin{align*}
  \hat{\bm Z}_n^{\mm=0}=\Big(\hat{Z}^{G^*}_{t,\mm=0},
  \hat{Z}^{G^*}_{t,L(t),\mm=0},
  \hat{\Lambda}^{\ell}_{\mm=0}(t),
  \hat{\Lambda}^{a}_{\mm=0}(t), \hat{\Lambda}^d_{\mm=0}(t)
  \Big)_{t\in [0,\tmax]}.
\end{align*}
Our targeting algorithm involves separate updating steps for current
estimators \(\hat{\QQ}^{G^*}_{t,L(t),\mm}\) and
\(\hat{\Lambda}^x_{\mm}(t)\), \(x=a,\ell,d\), from \(\mm\) to
\(\mm+1\), \(\mm=0,1,\ldots\), carried out simultaneously for all
time-points.  Our algorithm is overall centered around the
representation of the target parameter by iterated expectations that
we introduced in Section \ref{sec:target:parameter:parametrization},
with the separate updating steps ensuring that we solve the individual
terms of the efficient influence curve equation. To describe the
algorithm, recall that, as introduced in Display
\eqref{eq:observed:event:times},
\begin{align*}
  \qquad 0={t}_0 <{t}_1 < \cdots <{t}_{{K}_n} ,
\end{align*}
denotes the ordered sequence of unique times of changes
\(\cup_{i=1}^n \cup_{k=1}^{K_{i}}\{T_{i,k}\}\) across all subjects of
the dataset. Evaluated at time \(t_r\), the conditional expectations
\(Z^{G^*}_t\) and \(Z^{G^*}_{t,L(t)}\) from Section
\ref{sec:target:parameter:parametrization} can be written:
\begin{align}
  & {Z}_{t_{r}}^{G^*}
    = \EE_{P^{G^*}} [ Y \, \vert \, L(t_{r}),
    N^{\ell}(t_{r}), N^{a}(t_{r}), N^{d}(t_{r}),  \F_{t_{r-1}}] , \label{eq:Z:tilde}  \\
  & {Z}_{t_{r},L(t_{r})}^{G^*}
    = \EE_{P^{G^*}} [ Y \, \vert \,
    N^{\ell}(t_{r}), N^{a}(t_{r}), N^{d}(t_{r}),  \F_{t_{r-1}}]. \label{eq:Z:tilde:L}
\end{align}
Further, at each \(t_r\), we need to estimate the sequence of
conditional expectations: 
\begin{align}
  &                  \EE_{P^{G^*}} [ Y \, \vert \,  
    N^{a}(t_{r}), N^{d}(t_{r}),  \F_{t_{r-1}}] , \label{eq:Z:tilde:ell}\\
  &  \EE_{P^{G^*}} [ Y \, \vert \,
    N^{d}(t_{r}),  \F_{t_{r-1}}], \label{eq:Z:tilde:a} \\
  &\EE_{P^{G^*}} [ Y \, \vert \,
    \F_{t_{r-1}}] .\label{eq:Z:tilde:d}
\end{align}
Based on current estimators \(\hat{\bm{Z}}^{{\mm}}_n\), we construct
estimators for \eqref{eq:Z:tilde}--\eqref{eq:Z:tilde:d} for all
\(r=1, \ldots,{K}_n\), which further yield estimators for
\(\hat{h}^{\ell}_{t_r,\mm},\hat{h}^{a}_{t_r,\mm},\hat{h}^{d}_{t_r,\mm}\)
for the clever covariates \(h^{\ell}_{t_r},h^{a}_{t_r}\),
\(h^{d}_{t_r}\). A more detailed description of this procedure is
given in Appendix \ref{app:overview:targeting}.  Estimators
\(\hat{h}_{t_r}^{G^*}\), \(r=1, \ldots,{K}_n\), for the clever weights
are obtained by substituting the estimator \(\hat{G}_n\) for \({G}\)
in \eqref{eq:clever:weight}.

\subsection{Updating the estimator for \({\QQ}^{G^*}_{t_r,L(t_r)}\)}
\label{sec:7:update:Z:L}

The updating step for the time-sequence
\(( \hat{\QQ}^{G^*}_{t_r, L(t_r), \mm} )_{1 \le r \le {K}_n}\) of
estimators for the conditional expectations \eqref{eq:Z:tilde:L} is
defined according to the loss function and submodel from Section
\ref{ssec:tmle:cond:means}, given the estimated clever weights
\((\hat{h}_{t_r}^{G^*})_{1 \le r \le {K}_n}\), as:
\begin{align*}
  \hat{\bar{\QQ}}^{G^*}_{\tau,L(\tau),\mm+1} := \hat{\bar{\QQ}}^{G^*}_{\tau,L(\tau),\mm} (\hat{\eps}_n). 
\end{align*}
Here, \(\hat{\eps}_n\) is estimated from the data by:
\begin{align*}
  \hat{\eps}_n := \underset{\eps}{\argmin} \, \mathbb{P}_n  \,
  \intlogloss{\hat{\bar{\QQ}}_{\tmax,\mm}^{G^*}}\big(\hat{\bar{\QQ}}_{t_r, L(t_r),\mm}^{G^*}(\eps)\big),
\end{align*}
and the now updated estimators
\(\hat{\bar{\QQ}}^{G^*}_{\tau,L(\tau),\mm+1}=\big(
\hat{\QQ}^{G^*}_{t_r, L(t_r), \mm+1} \big)_{1 \le r \le {K}_n}\)
solves the desired part of the efficient influence curve equation
\begin{align*}
  \mathbb{P}_n \int_0^{\tmax}\hat{h}^{G^*}_t
  \big( \hat{\QQ}^{G^*}_{t,\mm} -  \hat{\QQ}^{G^*}_{t, L(t), \mm+1}\big)\, d{N}^{\ell}(t)
  = 0. 
\end{align*}
Notably, this updating step is carried out only at subject-specific
time-points
\(t_r \in \lbrace T^{\ell}_1, \ldots,
T^{\ell}_{N^{\ell}(\tau)}\rbrace\), where changes in \(L(t)\) are
observed.

\subsection{Updating the estimators for the intensities}
\label{sec:7:update:intensity}

For each of \(x=\ell,a,d\), the updating step for the current
estimator \(\hat{\Lambda}^{x}_{\mm}\) for the intensity \(\Lambda^x\)
uses the estimated time-sequence of clever weights
\((\hat{h}_{t_r}^{G^*})_{1 \le r \le {K}_n}\) and the time-sequence
\((\hat{h}^{x}_{t_r,\mm})_{1 \le r \le {K}_n} \) of current estimators
for the relevant clever covariate.  Based on the loss function
and submodel as defined in Section \ref{ssec:tmle:intensities},
\(\hat{\eps}_x\) is now estimated from the observed data by:
\begin{align*}
  \hat{\eps}_{x} := \underset{\eps}{\argmax} \,\, \mathbb{P}_n  \,
  \mathscr{L}_{x} \big( \hat{\Lambda}^{x}_\mm ( \cdot\, ; \eps)\big) .
\end{align*}
We denote the corresponding updated intensity by
\( \hat{\Lambda}^{x}_{\mm+1} = \hat{\Lambda}^{x}_{\mm} (\cdot\,;
\hat{\eps}_{x} )\), which now solves
\begin{align*}
  \mathbb{P}_n  \int_0^{\tmax}  \hat{h}^{G^*}_t 
  \hat{h}^{x}_{t,\mm} \, \big(  dN^x(t) - d\hat{\Lambda}_{\mm+1}^x(t) \big) =0,
\end{align*}
the equation of interest.

\subsection{Iterating the targeting steps}

 The updated estimators for \({\QQ}^{G^*}_{t,L(t)}\) and the
intensities across time yield updated estimators for the sequence of
conditional expectations \eqref{eq:Z:tilde}--\eqref{eq:Z:tilde:d} and
thus for the clever covariates.  This process constitutes the
targeting iteration from \(\mm\) to \( \mm+1\), corresponding to
updating \(\hat{\bm Z}_n^{\mm}\) into \(\hat{\bm Z}_n^{\mm+1}\). The
process is now repeated iteratively for \(\mm= 0,1,2,\ldots\), moving
from a current collection of estimators \(\hat{\bm Z}_n^{\mm}\) to an
updated collection of estimators \(\hat{\bm Z}_n^{\mm+1}\).  At each
step \(\mm\), the efficient score equation is evaluated
\begin{align*}
  \mathbb{P}_n D^*(\hat{\bm{Z}}_n^{\mm}, \hat{G}_n)
  &= \mathbb{P}_n \, \bigg(  \hat{Z}_{0,\mm}^{G^*}  + \,                  
    \int_0^{\tmax}\hat{h}_t\, \big( \hat{Z}_{t,\mm}^{G^*} -
    \hat{Z}_{t,L(t),\mm}^{G^*}\big) \,d N^{\ell}(t) \\
  &\quad\qquad + \sum_{x\in\lbrace a, \ell, d\rbrace}
    \int_0^{\tmax}\hat{h}_t\, \hat{h}^{x}_{t,\mm} \,
    \big(dN^{x}(t) - d\hat{\Lambda}^x_{\mm}(t)\big)\bigg) - \Psi^{G^*} (\hat{\bm{Z}}_n^{\mm}) , 
\end{align*}
and the iterations from \(\mm\) to \(\mm+1\) are continued until
\begin{align*}
  \vert \, \mathbb{P}_n D^*(\hat{\bm{Z}}_n^{\mm}, \hat{G}_n) \, \vert < s_n,
\end{align*}
for some choice of stopping criterion \(s_n = o_P(n^{-1/2})\). We can
for example use \(s_n = \sigma / (n^{-1/2} \log n) \), where
\(\sigma^2\) is the variance of the efficient influence function which
we can estimate by substituting the current estimators
\(\hat{\bm{Z}}_n^{\mm}\). Letting
\(\mm^* := \min (\mm \, : \, \vert \, \mathbb{P}_n
D^*(\hat{\bm{Z}}_n^{\mm}, \hat{G}_n) \, \vert < s_n)\), we denote the
final estimator by \(\hat{\bm{Z}}^*_{n}=\hat{\bm{Z}}^{\mm^*}_{n}\),
where
\begin{align*}
  \hat{\bm{Z}}^{\mm^*}_n =  \Big(  \hat{Z}^{G^*}_{t, \mm^*}, \hat{Z}^{G^*}_{t,L(t),\mm^*},
  \hat{\Lambda}_{\mm^*}^{\ell}(t),
  \hat{\Lambda}_{\mm^*}^{a}(t),  \hat{\Lambda}_{\mm^*}^{d}(t)
  \Big)_{t\in[0,\tmax]} .
\end{align*}

\section{Inference for the targeted substitution estimator}
\label{sec:TMLE:inference}

In Section \ref{sec:initial:estimation} we have summarized results
that yield the existence of initial estimators that fulfill the
regularity conditions of Theorem \ref{thm:eff:estimator}. In Section
\ref{sec:TMLE} we have presented a targeting algorithm that maps the
initial estimator \(\hat{\bm{Z}}_n^{\mm=0}\) into a targeted estimator
\(\hat{\bm{Z}}_n^{*}\) that solves the efficient influence curve
equation \eqref{eq:key:EIC:eq}. In the end we estimate the target
parameter by:
\begin{align*}
  \hat{\psi}^{G^*}_n  =
  \mathbb{P}_n \, \hat{\QQ}^{G^*}_{t=0,\mm^*},
\end{align*}
but we note that \(\hat{\psi}^{G^*}_n\) is equal to the substitution
estimator \(\Psi^{G^*} (\hat{P}^{*}_{n})\), where \(\hat{P}^*_n \) is
characterized by \((\bm{Z}^*_n, \hat{G}_n)\).  Specifically, the
components of \(\hat{\bm{Z}}_n^{*}\) are compatible with a probability
distribution \(\hat{P}_n^{*}\) whose conditional expectations of \(Y\)
given the relevant histories coincide with those of
\(\hat{\bm{Z}}_n^{*}\).

Theorem \ref{thm:eff:estimator} implies asymptotic efficiency of
\( \hat{\psi}^{G^*}_n\), and we can use the asymptotic normal
distribution
\begin{align*}
  \sqrt{n}\, \big(  \Psi^{G^*} (\hat{P}^{*}_n) 
  - \Psi^{G^*} (P_0) \big)
  \overset{\mathcal{D}}{\rightarrow} \mathcal{N} (0, {P}_0  D^* ( {P}_0
  )^2), 
\end{align*}
to provide an approximate two-sided confidence interval. Here
\begin{align}
  \hat{\sigma}_n^2 := \mathbb{P}_n \lbrace D^* (
  \hat{P}^{*}_n
  )\rbrace ^2,
  \label{eq:hat:sigma:n}
\end{align}
can be used to estimate the variance of the TMLE estimator.

\section{Empirical study}
\label{sec:simulation:study}

We demonstrate our methods by describing the application to simulated
data, using the proposed algorithm to estimate the contrast between a
treatment rule that sets \(A(t)=1\) and a treatment rule that sets
\(A(t)=0\). One could think of a randomized trial where subjects in
the study population are initially randomized to receive treatment or
no treatment, but may switch treatment over time depending on the
value of time-varying covariates.  This is a setting where a standard
Cox regression, as mentioned in Section \ref{sec:motivating:example},
does not apply. Our focus is here to confirm our theory by evaluating
the targeting algorithm, and, further, to compare our new algorithm to
the existing longitudinal TMLE (LTMLE).

We consider a setting where subjects of a population are followed for
\(\tau\) days of follow-up time. On any given day, subjects may change
treatment, covariates, may be lost to follow-up (right-censored), or
may experience the outcome of interest. Both the treatment and the
censoring mechanisms are subject to time-dependent confounding.  The
data are simulated such that the number of monitoring times per
subject are approximately the same across different \(\tau\). Thus,
the larger \(\tau \) is, the less events are observed at single
monitoring times.  Throughout we let \(n=1,000\).

\subsection{Setup} In all simulations, we generate data from a
sequence of logistic regressions allowing time-varying treatment and
covariates to affect one another, keeping effects
time-constant. Throughout, we let
\(L_0\in \lbrace 1, \ldots, 6\rbrace\),
\(A_0\in \lbrace 0, 1\rbrace\), \(A(t)\in \lbrace 0, 1\rbrace \) and
\(L(t)\in \lbrace 0, 1\rbrace \). We draw observations \(A_0\) such
that large values of \(L_0\) increase the probability of
\(A_0=1\). Subjects for which \(A_0=0\) and with current covariate
value equal to 1 are more likely to begin treatment by time
\(t\). Further, current treatment increases the probability of
\(L(t)=1\).  Both the baseline treatment \(A_0\) and the time-varying
treatment \(A(t)\) have a negative main effect on the outcome process
\(N^d(t)\). Moreover, \(A_0\) has a different effect within levels of
\(L(t)\), and \(L(t)\) has a positive main effect. At last, the
censoring process \(N^c(t)\) depends on both \(A_0\) and current
covariates.

\subsection{Parameter of interest}
Our parameter of interest is the contrast between the
intervention-specific mean outcomes under the regime that imposes
\(A(t)=1\) (subjects adhering to treatment) to the regime that imposes
\(A(t)=0\) (subjects adhering to no treatment). Both regimes also
impose no censoring throughout follow-up. Thus, our interventions are
specified as follows, following Definition 1:
\begin{align*}
 & \text{Intervention 0: } \qquad
  dG^{0}_{t}(O) = \big(1- A(t) \big)^{N^{a}(dt)} \big( 1 -
  N^c(t)
  \big),  \\
 & \text{Intervention 1: } \qquad
  dG^{1}_{t}(O) = \big(A(t) \big)^{N^{a}(dt)} \big( 1 -
  N^c(t)
  \big).  \qquad\qquad\qquad\quad
\end{align*}
We let \(\psi^1\) denote the intervention-specific mean outcome under
Intervention 1 and \(\psi^0\) the intervention-specific mean outcome
under Intervention 0. Our target parameter is the corresponding
difference:
\begin{align*}
  \Psi (P)= \int  Y  dP_{Q,G^{1}} -\int  Y  dP_{Q,G^{0}}
  = \psi^1 - \psi^0.
\end{align*}
As usual, the true value is denoted by \(\psi_0=\Psi (P_0)\).  There
is no unmeasured confounding, so that our parameter can be interpreted
causally as the treatment effect we would see in the real world if
subjects had been treated (\(A(t)=1\)) compared to not
(\(A(t)=0\)). Note that \(\psi_0<0\) reflects a protecting effect of
the treatment. Throughout, we estimate \(\psi_0^1\) and \(\psi_0^0\)
separately and report results for the estimated difference
\(\psi_0 = \psi_0^1 - \psi_0^0\).

\subsection{Simulations} Overall, we seek to investigate the
following:

\begin{enumerate}
\item The distribution of \(\sqrt{n}\, ( \hat{\psi}^*_n - \psi_0) \)
  under correctly specified parametric models for initial estimation
  of the nuisance parameters.
\item Comparison to LTMLE estimation when data are given on a discrete
  grid with small \(\tau\).
\item Robustness properties of our TMLE estimator: What happens when
  we misspecify the distribution of, for example, the outcome process.
\end{enumerate}

As in Section \ref{sec:TMLE:inference}, we use \({\sigma}^2\) to
denote the variance of the canonical gradient and \(\hat{\sigma}^2_n\)
its estimator. We are generally interested in the coverage of
confidence intervals based on
\(\hat{\sigma}^2_n\).

\subsubsection*{Comparison to LTMLE}

For small \(\tau\), we can compare our estimation procedure to the
results from the existing LTMLE implementation \citep{ltmleRpackage}.
Table \ref{table:ctmle:ltmle:results:2} shows the results of
estimating the contrast \(\psi_0=\psi_0^1-\psi_0^0\) when \(\tau = 5\)
and when \(\tau = 50\), using LTMLE and using our algorithm. The table
illustrates the deterioration of LTMLE when \(\tau\) increases; for
the simulations with \(\tau =50\), the estimated standard error from
LTMLE is completely off.

% Tue May 12 14:32:17 2020
\begin{table}[!t]
%  \centering
  \begin{tabular}{cc}
  \begin{minipage}{.5\linewidth}
\begin{tabular}{rrrr}
  \hline\hline\\[-0.9em]
  \multicolumn{1}{l}{LTMLE}  & \(\tau=5\) &\(\tau=30\) & \(\tau=50\)  \\
  \hline\\[-0.9em]
  $\psi_{0}$ & -0.135 & -0.138 & -0.148 \\ 
  $\hat{\psi}^{\mathrm{ltmle}}_{n,M}$ & -0.135 & -0.136 & -0.149 \\ 
  Bias & -0.001 & 0.002 & -0.001 \\ 
  Cov (95\%) & 0.941 & 0.972 & 0.986 \\ 
  $\sqrt{\mathrm{MSE}}$ & 0.034 & 0.048 & 0.039 \\ 
  $\hat{\sigma}_{n,M}$ & 0.034 & 0.051 & 0.052 \\  
  \hline\\
\end{tabular}
\end{minipage} &
  \begin{minipage}{.5\linewidth}
\begin{tabular}{rrrr}
  \hline\hline\\[-0.9em]
  \multicolumn{1}{l}{contTMLE}  & \(\tau=5\) &\(\tau=30\) & \(\tau=50\)  \\
  \hline\\[-0.9em]
  $\psi_{0}$ & -0.135 & -0.138 & -0.148 \\ 
  $\hat{\psi}^{*}_{n,M}$ & -0.137 & -0.137 & -0.147 \\ 
  Bias & -0.001 & 0.001 & 0.001 \\ 
  Cov (95\%) & 0.947 & 0.956 & 0.953 \\ 
  $\sqrt{\mathrm{MSE}}$ & 0.033 & 0.034 & 0.035 \\ 
  $\hat{\sigma}_{n,M}$ & 0.033 & 0.035 & 0.034 \\ 
  \hline\\
\end{tabular}
\end{minipage}
  \end{tabular}
  \caption{ %\raggedright
    % \begin{flushleft}
    Results from a simulation study with \(\tau=5,30,50\) days of follow-up. Left: Results from applying
    existing LTMLE software. Right: Results from applying
    our algorithm (contTMLE). The row \(\psi_0\) shows the
    true value of the target parameter, and the rows
    \(\hat{\psi}_{n,M}\) and \(\hat{\sigma}_{n,M}\) show averages of
    estimates across the \(M=1000\) simulation repetitions. 
      %       \end{flushleft}
  }
\label{table:ctmle:ltmle:results:2}
\end{table}

\subsubsection*{Performance of estimation and double robustness}

We investigate the distribution of
\(\sqrt{n}\, ( \hat{\psi}^{*}_n - \psi_0) \) under correctly specified
and misspecified parametric models used for the initial
estimation. For the latter, we leave out all time-varying variables
when estimating the outcome distribution.  Table
\ref{table:ctmle:misspecify:results:1} shows the results of estimating
the contrast \(\psi_0=\psi_0^1-\psi_0^0\) with \(\tau=30, 50, 100\)
days of follow-up.  In the last setting, LTMLE cannot be
applied due to too few events at single monitoring times.  The results
in the table illustrate that our algorithm achieve appropriate
coverage across \(\tau\), and that the targeting step of our algorithm
removes bias from the initial estimation. In these simulations we
further see that our TMLE estimation based on misspecified initial
estimators is able to achieve very similar results to our TMLE
estimators based on correctly specified initial estimators.

\begin{table}[ht]
  \centering
  \begin{tabular}{cc}
  \begin{minipage}{.5\linewidth}
    \begin{tabular}{rrrr}
      \multicolumn{4}{l}{Correctly specified initial estimation}\\
      \hline\hline\\[-0.9em]
      \multicolumn{1}{l}{}  & \(\tau=30\) &\(\tau=50\) & \(\tau=100\)  \\
      \hline\\[-0.9em]
      $\psi_{0}$ & -0.138 & -0.148 & -0.130 \\ 
      $\hat{\psi}^{\mathrm{init}}_{n,M}$ & -0.138 & -0.147 & -0.129 \\ 
      $\hat{\psi}^{*}_{n,M}$ & -0.137 & -0.147 & -0.129 \\ 
      Bias (init) & 0.000 & 0.001 & 0.001 \\ 
      Bias (tmle) & 0.001 & 0.001 & 0.001 \\ 
      Cov (95\%) & 0.956 & 0.953 & 0.945 \\ 
      $\sqrt{\mathrm{MSE}}$ & 0.034 & 0.035 & 0.035 \\ 
      $\hat{\sigma}_{n,M}$ & 0.035 & 0.034 & 0.034 \\ 
      \hline\\
    \end{tabular}
\end{minipage} &
  \begin{minipage}{.5\linewidth}
\begin{tabular}{rrrr}
  \multicolumn{4}{l}{Misspecified initial estimation}\\
  \hline\hline\\[-0.9em]
  \multicolumn{1}{l}{}  & \(\tau=30\) &\(\tau=50\) & \(\tau=100\)  \\
  \hline\\[-0.9em]
  $\psi_{0}$ & -0.138 & -0.148 & -0.130 \\ 
  $\hat{\psi}^{\mathrm{init}}_{n,M}$ & -0.119 & -0.135 & -0.128 \\ 
  $\hat{\psi}^{*}_{n,M}$ & -0.138 & -0.147 & -0.129 \\ 
  Bias (init) & 0.019 & 0.013 & 0.001 \\ 
  Bias (tmle) & 0.000 & 0.001 & 0.000 \\ 
  Cov (95\%) & 0.936 & 0.955 & 0.944 \\ 
  $\sqrt{\mathrm{MSE}}$ & 0.035 & 0.033 & 0.034 \\ 
  $\hat{\sigma}_{n,M}$ & 0.035 & 0.034 & 0.034 \\  
  \hline\\
\end{tabular}
\end{minipage}  \end{tabular}
\caption{%\raggedright
  Results from a simulation study with \(\tau=30,50,100\) days of
  follow-up. The initial estimator is denoted
  $\hat{\psi}^{\mathrm{init}}_{n}$ and the targeted estimator is
  denoted $\hat{\psi}^{*}_{n}$.  Left: Results from applying our
  algorithm with correctly specified initial estimation. Right:
  Results from applying our algorithm with misspecified initial
  estimation.  The row \(\psi_0\) shows the true value of the target
  parameter, and the rows \(\hat{\psi}^{*}_{n,M}\),
  \(\hat{\psi}^{\mathrm{init}}_{n,M}\) and \( \hat{\sigma}_{n,M}\)
  show averages of estimates across the \(M=1000\) simulation
  repetitions. As desired, the targeting algorithm corrects the bias
  of the initial estimator. It is further noted that the results for
  the TMLE estimator with misspecified and correctly specified initial
  estimation are very similar.  }
\label{table:ctmle:misspecify:results:1}
\end{table}

\subsection{Conclusions on the empirical findings}

Based on our simulations we draw the following conclusions:
\begin{enumerate}
\item The confidence intervals based on the estimate of the efficient
  influence curve have valid coverage in the simulations when the
  initial estimation is correctly specified.
\item Our new estimation algorithm produces similar results to the
  existing LTMLE for the settings where LTMLE applies.  When \(\tau\)
  gets larger, and thus observation sparsity increases, LTMLE breaks
  down.
\item Misspecification of the \(Q\)-part of the likelihood, such as
  the distribution of the outcome process, leads to biased initial
  estimation. This bias is corrected for by our targeting procedure.
\end{enumerate}

\section{Discussion}
\label{sec:discussion:future}

The current paper lays the groundwork for targeted minimum loss based
estimation of intervention-specific mean outcomes based on a
continuous-time counting process model.  Our generalization of the
TMLE methodology is motivated by the problems arising when estimating
interventional effects from observational data, where observations are
made without a schedule and potentially over very long time
horizons. We have developed our TMLE based on a continuous-time model
to cover any time-scale.  The advantage is that we can track the
information of changes in continuous time, hereby preserving the
original time-order of treatment and covariates.

We have derived the efficient influence function for the estimation
problem, and we have proposed a particular targeting algorithm to
construct an estimator that solves the efficient influence curve
equation.  Contrary to the existing discrete-time longitudinal TMLE
method (LTMLE) that involves a regression step separately for every
time-point, even if nothing is observed at that time-point, our
proposed TMLE algorithm allows us to smooth information across time.
In a sense, the estimation procedure we have presented amounts to
doing a sequential regression for each infinitesimal time interval,
but simultaneously by pooling over all \(t\).

A substantial advantage of the proposed setting for estimation of
interventional effects is that it provides a framework for studying
various types of interventions.  In this paper, we have focused on
interventions on censoring (\(\Lambda^c\)) and treatment decision
(\(\pi_t\)), and in our simulations we considered only an intervention
that imposed `treatment' compared to `no treatment'.  As discussed in
Section 1.1 and Section 2.2, other interventions on treatment decision
and interventions on the treatment monitoring intensity \(\Lambda^a\)
are possible within the same framework: The targeting procedure
remains the same, although, for the latter, \(\Lambda^a\) is taken out
of the non-interventional part, and thus out of the targeting
procedure. In future work we will consider stochastic and optimal
interventions
\citep{murphy2003optimal,murphy2005experimental,hernan2006comparison,zhang2013robust,zhang2012robust}
both on the treatment decision and on the treatment monitoring. One
intervention of interest could be to replace \(\Lambda^a\) by
\(\Lambda^{a,*}\) that only depends, for example, on the time since
last treatment monitoring to ensure a regime where subjects are
monitored regularly.

It is evident from the general analysis of TMLE that the performance
of the final estimator depends on how well the nuisance parameters are
estimated. In the continuous-time setting, the nuisance parameters
comprise regressions of the intensity of changes and regressions of
the time-dependent means.  We have shown that we can construct HAL
estimators for all required components such as to fulfill the
conditions needed for asymptotic efficiency. To improve the estimation
of nuisance parameters, it is desirable to set up large libraries
consisting of flexible models and estimators for the conditional means
and intensities that can then be fed into the super learner. In
general, one should adapt the choices to the application at hand,
considering the length of the time interval, the marginal intensity
rates of the changes of action and covariates as well as the sample
size and outcome prevalence.

We further point out that our targeting algorithm does not depend on
the conditional density \(\mu_t\) of the covariates \(L(t)\); although
we may use \(\mu_t\) to provide an initial estimator for the
conditional expectations, we are not committed to doing so. The fact
is that it can be a big advantage to avoid estimation of \(\mu_t\)
altogether. In future work we investigate in more detail how to
construct an estimator, for instance based on inverse probability
weighted regression, that we map into an estimator that fulfills the
representation in terms of iterated expectations of Lemma
\ref{lemma:representation:Psi:Z}.

Future work will further deal with a general implementation of our
TMLE procedure, beyond the simple settings of our simulation study. In
addition to the applications outlined in Section
\ref{sec:motivating:example}, another important area of application is
that of randomized trials where subjects crossover, start additional
treatment and drop out. Here our methods can be applied to supplement
the intention-to-treat analysis.

\section*{Acknowledgements}

The authors would like to thank the anonymous reviewers for helpful
comments and suggestions that led to substantial improvement in the
presentation.

\newpage

\renewcommand{\thesection}{Appendix \Alph{section}}
\renewcommand{\thesubsection}{\Alph{section}.\arabic{subsection}}

% \newpage

% \bibliographystyle{chicago} 
% \bibliography{draft-continuous-tmle-revision2-fall-2020}
% \bibliography{refs}   

\appendix 

\section*{Supplementary material}

\setcounter{section}{0}

\section{Analysis of the estimation problem}
\label{app:canonical:gradient}

The first part of this appendix (Sections
\ref{sec:intro:canonical:gradient}--\ref{sec:deriving:eff:ic}) is
concerned with the derivation of the canonical gradient as presented
in Theorem \ref{thm:eff:ic}. In the second part, we compute the
second-order remainder (Sections
\ref{sec:cor:eff}--\ref{sec:remainder}) and prove Lemma
\ref{lemma:representation:Psi:Z} from Section
\ref{sec:target:parameter:parametrization} (Section
\ref{app:proof:of:lemma:Psi:Z}). We refer to the main text with
respect to notation; additional notation is introduced in the sections
of the appendix where necessary.

\subsection{Canonical gradient}
\label{sec:intro:canonical:gradient}

An estimator is asymptotically efficient if and only if it is regular
and asymptotically linear with influence curve equal to the canonical
gradient. This makes the canonical gradient a central component in the
construction of an asymptotically efficient estimator
\citep{bickel1993efficient,van2000asymptotic,tsiatis2007semiparametric}. We
here derive the canonical gradient for our statistical model
\(\mathcal{M}\) and target parameter
\(\Psi^{G^*} \, : \, \mathcal{M} \rightarrow \R\).  In the following,
consider the submodel \(\mathcal{M}_G \subset \mathcal{M}\) that
assumes \(G\) to be known and let \(\mathscr{T}_G(P)\) denote the
tangent space at \(P\) in the submodel \(\mathcal{M}_G\). To derive
the canonical gradient, we characterize the tangent space
\(\mathscr{T}_G(P)\) and construct an initial gradient of the pathwise
derivative of \(\Psi^{G^*} \, : \, \mathcal{M}_G\rightarrow \R\) at
\(P\). Relying on the factorization of any \(P\in\mathcal{M}\) into a
\(G\)-part and a \(Q\)-part, the canonical gradient can then be found
as the projection of the initial gradient onto the tangent space
\(\mathscr{T}_G(P)\) \citep{vanRobins2003unified}. Note that the
tangent space \(\mathscr{T}_G(P)\) is generated by fluctuations of the
conditional distributions of \(Q\) only since \(G\) is known. We deal
with this in Section \ref{sec:tangent:space}. Furthermore, to
construct an initial gradient, we can use the influence curve of any
regular and asymptotically linear (RAL) estimator of
\(\Psi^{G^*} \, : \, \mathcal{M}_G \rightarrow \R\). In Section
\ref{sec:initial:gradient}, we construct an inverse probability
weighted estimator
\citep{hernan2000marginal,hernan2002estimating,hernan2001marginal,vanRobins2003unified},
the influence curve of which we will use as an initial gradient. In
Section \ref{sec:deriving:eff:ic} we provide projection formulas and
in Section \ref{sec:proof:thm:eff} we collect the results of the
lemmas of Section \ref{sec:deriving:eff:ic} to provide the final proof
of Theorem \ref{thm:eff:ic}.

\subsection{Initial gradient}
\label{sec:initial:gradient} 

To obtain an initial gradient, we construct an asymptotically linear
estimator given the submodel \(\mathcal{M}_G \subset \mathcal{M}\)
that assumes that \(G\) is known and derive its influence function. To
this end, we note the following representation of the target
parameter, for any \(P\in \mathcal{M}\),
\begin{align*}
  \Psi^{G^*}( P)
  &= \EE_{ P^{G^*} }\big[ Y\big]
    =
    \int y\,  \Prodi_{t \in [0, \tmax]} dQ_{ t}(o) \, dG_t^*(o) \\
  &=
    \int y\, \Prodi_{t \le \tmax}
    d{G}_{t}^*(o)   \frac{d{G}_{t}(o) }{d{G}_{t}(o)} \,
    d{Q}_{t}(o)  \\
  &=
    \int \bigg( y \, \Prodi_{t \le \tmax} \frac{ d{G}_{t}^*(o) }{d{G}_{t}(o)} \bigg)
    \Prodi_{t \le \tmax} d{G}_{t}(o)\, d{Q}_{t}(o)   \\
   & = \EE \bigg[ Y \, \Prodi_{t \le \tmax} \frac{d{G}_{t}^*(O)}{d{G}_{t}(O)} \bigg] . 
\end{align*}
This suggests that we can estimate
\( \Psi^{G^*} \, :\, \mathcal{M}_G \rightarrow \R\) with an inverse
probability weighted estimator as follows:
\begin{align}
  \hat{\psi}^{G^*}_{n,\mathrm{IPW}}  := \mathbb{P}_n \bigg(
  \Prodi_{t \le \tmax} \frac{d{G}_{t}^*}{d{G}_{t}} \, Y \bigg)
  =
  \frac{1}{n} \sum_{i=1}^{n}
  \Prodi_{t \le \tmax} \frac{d{G}_{t}^* (O_i)}{d{G}_{t} (O_i)}
  \, Y_i 
  \label{eq:IPTW:est}
\end{align}
The IPW estimator as written in \eqref{eq:IPTW:est} is a sample
mean. It follows then directly that it is a linear (and thereby also
asymptotically linear) estimator at any \(P \in \mathcal{M}_G\) with
influence curve:
\begin{align}
  D^{G^*}_{\mathrm{IPW}}(P) (O) = \Prodi_{t\le \tau}
  \frac{d{G}_{t}^* (O)}{d{G}_{t} (O)}  \, Y - \Psi^{G^*}(P).
  \label{eq:ipaw}
\end{align}
Since \(D^{G^*}_{\mathrm{IPW}}(P)\) is an influence curve of a RAL
estimator of \(\Psi^{G^*}\) in \(\mathcal{M}_G\), we can derive the
canonical gradient by projecting \(D^{G^*}_{\mathrm{IPW}}(P)\) onto
\(\mathscr{T}_G(P)\).

\subsection{Tangent space}
\label{sec:tangent:space}

In the following we characterize the tangent space
\(\mathscr{T}_G(P)\). Based on the factorization of the probability
distribution of the observed data, such that the likelihood from
Display \eqref{eq:like} of Section \ref{sec:obs} consists of separate
terms for each of \(\mu_{L_0}\), \((\mu_t)_{t\ge 0}\) and
\(\Lambda^x\), \(x=a, \ell, d\), the tangent space \(\mathscr{T}_G\)
is given as the orthogonal sum of the individual tangent spaces:
\begin{align}
  \mathscr{T}_G = \mathscr{T}_{\mu_0} \oplus \mathscr{T}_{\mu} \oplus \mathscr{T}_{\Lambda^a} \oplus \mathscr{T}_{\Lambda^{\ell}} \oplus \mathscr{T}_{\Lambda^d}.
  \label{eq:orth:sum:0}
\end{align}
In the following, \(L_2(P)\) is used to denote the Hilbert space of
measurable functions \(h \, : \, \mathcal{O} \rightarrow \R\) with
\(P h^2 < \infty\).  The following lemmas characterize the tangent
spaces in \eqref{eq:orth:sum:0}. The notation \(h\in \sigma(X)\), for
some random variable \(X\), means that \(h\) is measurable with
respect to the \(\sigma\)-algebra generated by \(X\), and can thus be
written as a function of \(X\).

\begin{lemma}
  The tangent space \(\mathscr{T}_{\mu_{L_0}}\) associated with the density \(\mu_{L_0}\) of
  \(L_0\) is given by: 
  \begin{align*}
    \mathscr{T}_{\mu_{L_0}} (P)
    &= \big\lbrace
      h \in \F_0   \, : \,  \EE_P[ h ]
      = 0\big\rbrace \cap L_2(P).
  \end{align*}
  \label{lemma:tangent:q0}
\end{lemma}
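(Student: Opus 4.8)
The plan is to identify $\mathscr{T}_{\mu_{L_0}}(P)$ as the closed linear span of scores of one-dimensional regular parametric submodels that perturb only the baseline density $\mu_{L_0}$ while leaving all other likelihood components $(\mu_t)_{t\ge 0}$, $(\Lambda^x)_{x=a,\ell,d}$, and the fixed interventional part $G$ unchanged. First I would fix a bounded measurable function $h\in\sigma(L_0)$ with $\EE_P[h]=0$ and consider the submodel $d\mu_{L_0}^{(\eps)} = (1+\eps h)\,d\mu_{L_0}$, valid for $|\eps|$ small (or, to avoid positivity issues, the exponential tilt $d\mu_{L_0}^{(\eps)} \propto \exp(\eps h)\,d\mu_{L_0}$, which has the same score at $\eps=0$). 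Since the likelihood in \eqref{eq:like} factorizes and only the first factor $\mu_{0,L_0}(L_0)$ depends on this perturbation, the score of the submodel at $\eps=0$ is exactly $h(L_0)$. This shows every such $h$ (mean-zero, $\F_0$-measurable, square-integrable) lies in $\mathscr{T}_{\mu_{L_0}}(P)$, and since $\F_0=\sigma(L_0)$, every mean-zero element of $L_2(P)\cap\sigma(L_0)$ arises as such a limit of scores; conversely, any score of a submodel fluctuating only $\mu_{L_0}$ is automatically a function of $L_0$ with $P$-mean zero, because differentiating the constraint $\int d\mu_{L_0}^{(\eps)}=1$ at $\eps=0$ forces $\EE_P[h]=0$.

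The two inclusions then give the claimed identity
\[
  \mathscr{T}_{\mu_{L_0}}(P) = \big\lbrace h\in\sigma(L_0) \, : \, \EE_P[h]=0\big\rbrace \cap L_2(P),
\]
after checking that the right-hand side is already closed in $L_2(P)$ (it is the intersection of the closed subspace $L_2(P)\cap\sigma(L_0)$ with the hyperplane $\{\EE_P[h]=0\}$), so no further closure operation is needed. The routine verifications are: that the exponential-tilt submodel is a genuine regular parametric submodel through $P$ contained in $\mathcal{M}_G$ (it keeps $G$ fixed and only alters $Q$ through its $L_0$-marginal, which is unconstrained in $\mathcal{Q}$), and that bounded mean-zero functions of $L_0$ are dense in the target subspace so that the span of scores recovers the full space.

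The only mild obstacle I anticipate is bookkeeping around the product-integral likelihood: one must be careful that perturbing $\mu_{L_0}$ genuinely leaves the conditional intensities $d\Lambda^x(t\,\vert\,\F_{t-})$ and the conditional densities $\mu_t(\cdot\,\vert\,\F_{t-})$ unchanged as \emph{conditional} objects, so that the log-likelihood derivative picks up only the $\log\mu_{L_0}$ term and not hidden contributions through the conditioning on $\F_0$ further down the time-ordering. This is true precisely because the factorization in \eqref{eq:like} is a factorization into conditional laws given the past, and a change in the marginal of $L_0$ does not alter those conditional laws; making this explicit is the main point to argue carefully, after which the result is immediate. The same argument template will be reused for the remaining tangent-space lemmas for $\mathscr{T}_\mu$ and the $\mathscr{T}_{\Lambda^x}$, with the martingale structure of $M^x=N^x-\Lambda_0^x$ entering there in place of the simple mean-zero constraint here.
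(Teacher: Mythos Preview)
Your proposal is correct and follows essentially the same approach as the paper, which simply notes that since no restrictions are placed on $\mu_{L_0}$, the tangent space is the full mean-zero $L_2$ space of functions of $L_0$ and cites \cite[Example 25.16]{van2000asymptotic}. You have unpacked that citation in detail, including the explicit $(1+\eps h)$ submodel and the observation that the factorization into conditional laws ensures no hidden contributions from later time-points, which is more than the paper provides but in no way different in spirit.
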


\begin{proof} We put no restrictions on the density \(\mu_{L_0}\), so
  the corresponding tangent space \(\mathscr{T}_{\mu_{L_0}}\) is the
  entire Hilbert space of measurable square-integrable functions of
  \(L_0\) with mean zero (\cite{van2000asymptotic}, Example
  25.16).\end{proof}

\begin{lemma}
  The tangent space \(\mathscr{T}_{\mu}\) associated with the
  conditional density \(\mu_t\) of \(L(t)\) is given by:
  \begin{align*}
    \mathscr{T}_{\mu} (P)
    = \bigg\lbrace \int_0^{\tmax} h_t  \,  N^{\ell}(dt) \, :\,
    h_t \in   \sigma ( L(t),N^{\ell}(t), \F_{t-}) \, \wedge \, \\ 
    \EE_{P} \big[ h_t  \, \big\vert \, N^{\ell}(t), \F_{t-}\big] = 0 \bigg\rbrace  \cap L_2(P). 
  \end{align*}
    \label{lemma:tangent:qt}
  \end{lemma}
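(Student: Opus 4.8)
The plan is to compute the tangent space directly from the definition: the closed linear span of scores of one-dimensional submodels $\{\mu_{t,\epsilon}\}$ obtained by fluctuating the conditional density $\mu_t$ of $L(t)$ while keeping all other factors of the likelihood \eqref{eq:like} fixed. First I would fix a bounded function $b_t = b_t(L(t), N^\ell(t), \F_{t-})$ with $\EE_P[b_t \mid N^\ell(t), \F_{t-}] = 0$, and consider the path $d\mu_{t,\epsilon}(\ell \mid \F_{t-}) = (1 + \epsilon\, b_t)\, d\mu_{0,t}(\ell \mid \F_{t-})$ (or, to stay in a valid density space, the equivalent exponential-tilt path $\propto e^{\epsilon b_t}\,d\mu_{0,t}$); the mean-zero condition makes this a genuine fluctuation through $P$ at $\epsilon = 0$. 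Substituting this into the likelihood \eqref{eq:like}, the only factor that changes is $\prodi_{t}\big(\mu_{t}(L(t)\mid\F_{t-})\big)^{N^\ell(dt)}$, and differentiating $\log$ of that product integral at $\epsilon = 0$ yields the score $\int_0^{\tmax} b_t\, N^\ell(dt)$. This shows the right-hand side of the claimed identity is contained in $\mathscr{T}_\mu(P)$.

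Next I would argue the reverse inclusion, i.e.\ that these scores exhaust the tangent space generated by fluctuations of $\mu_t$. The key point is that any perturbation of the conditional law of $L(t)$ at a jump time of $N^\ell$, leaving the intensities and the other covariate/treatment densities fixed, must be of the form $(1+\epsilon b_t)d\mu_{0,t}$ for some $b_t$ that is $\sigma(L(t),N^\ell(t),\F_{t-})$-measurable (it can depend on the realized value $L(t)$, on whether a jump occurred, and on the past), and the normalization $\int d\mu_{t,\epsilon} = 1$ forces exactly $\EE_P[b_t \mid N^\ell(t),\F_{t-}] = 0$. Taking closed linear spans and intersecting with $L_2(P)$ then gives the stated set. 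I would also verify the orthogonality in \eqref{eq:orth:sum:0} is consistent here: scores of the $\mu_t$-fluctuations are martingale-type integrals against $N^\ell$ restricted to the values of $L$ at jump times, orthogonal in $L_2(P)$ to scores living in $\F_0$, to intensity scores (which are martingale integrals of the form $\int f_t\, dM^x_t$), and to scores for different likelihood factors, by the usual conditional-mean-zero / predictability arguments.

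The main obstacle I anticipate is making the product-integral differentiation rigorous: one must justify interchanging $\frac{d}{d\epsilon}$ with the product integral $\prodi_{t\in(0,\tmax]}$ and argue that the log-derivative of $\prodi_t (\mu_{t,\epsilon})^{N^\ell(dt)}$ is the \emph{sum} (finite, since there are only finitely many jumps of $N^\ell$ on $[0,\tmax]$) of the pointwise log-derivatives $b_{T^\ell_k}$, which reassembles as the stochastic integral $\int_0^{\tmax} b_t\, N^\ell(dt)$. The finiteness of $N^\ell(\tmax)$ is what makes this clean — each subject contributes finitely many factors — and I would lean on the product-integral calculus of \citet[Section II.6]{andersen2012statistical} together with a dominated-convergence argument (using boundedness of $b_t$ and a Hellinger-differentiability estimate for the path) to push the derivative through. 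A secondary technical point is checking that the exponential-tilt path is differentiable in quadratic mean at $\epsilon=0$ so that its score is indeed what the formal computation produces; this is standard but should be stated. Once these are in place, the characterization follows, and the analogous lemmas for $\mathscr{T}_{\Lambda^x}$ will use the same template with the intensity-fluctuation \eqref{eq:lambda:fluc} in place of the density tilt.
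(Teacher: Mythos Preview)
Your proposal is correct and follows essentially the same route as the paper: fluctuate $\mu_t$ by a linear tilt $(1+\eps h_t)\mu_t$ with $h_t$ measurable with respect to $\sigma(L(t),N^\ell(t),\F_{t-})$ and conditionally mean zero, then differentiate the log of the $\mu$-factor of the likelihood to obtain the score $\int_0^\tau h_t\,N^\ell(dt)$. You are in fact more careful than the paper on two points the paper leaves implicit: the reverse inclusion (that every regular fluctuation of $\mu_t$ produces a score of this form) and the regularity of the product-integral differentiation, which is indeed handled by the finiteness of $N^\ell(\tau)$ so that the product integral over $\mu_t^{N^\ell(dt)}$ is a finite product.
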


  \begin{proof} Consider the parametric submodel:
\begin{align*}
  \mu_{t, \eps, h_t} (o) = \big( 1 + \eps h_t (o) \big) \, \mu_{t} (o),
\end{align*}
for \(\eps\ge 0\), with \(h_t \, :\, \mathcal{O} \rightarrow \R\) such
that \(h_t \in \sigma ( L(t), \F_{t-}) \).  To ensure that
\( \mu_{t, \eps, h}\) for all \( \eps>0\) actually gives rise to a
well-defined probability measure, note that:
\begin{align*}
  \int_{\mathcal{O}} \,\mu_{t, \eps,h_t}\,d\nu^{\ell} =
  \underbrace{\int_{\mathcal{O}} \mu_{t}\,d\nu^{\ell}}_{=1} + \,
  \eps  \int_{\mathcal{O}} h_t\,  \mu_{t}\,d\nu^{\ell} =1 \quad \text{ if and only if } \\
  \quad
  \int_{\mathcal{O}} h_t\,\mu_{t}\,d\nu^{\ell}
  = \EE [ h_t \, \big\vert  \F_{t-}] = 0 . 
\end{align*}
The contribution to the log-likelihood is
\begin{align*}
 \log {dP_{ \eps, h_t}}(o) = \int_0^{\tmax}\log \Big( \big( 1 + \eps h_t (o) \big)
  \, \mu_{t} (o)\Big) N^{\ell}(dt), 
\end{align*}
and we derive the score accordingly
\begin{align*}
  \frac{d}{d\eps} \log {dP_{ \eps, h_t}}(o) \bigg\vert_{\eps=0}
&  = \frac{d}{d\eps} \bigg( \int_0^{\tmax}\log \Big( \big( 1 + \eps h_t (o) \big)
  \, \mu_{t} (o)\Big) N^{\ell}(dt)\bigg)\bigg\vert_{\eps=0} \\
&  = \int_0^{\tmax}h_t(o) \, N^{\ell}(dt). 
\end{align*}
This shows that the tangent space is given by:
\begin{align*}
  \mathscr{T}_{\mu} (P)
  = \bigg\lbrace \int_0^{\tmax} h_t  \,  N^{\ell}(dt) \, :\,
    h_t \in   \sigma ( L(t),N^{\ell}(t), \F_{t-})  \, \wedge \,  \\
    \EE_{P} \big[ h_t  \, \big\vert \,  N^{\ell}(t), \F_{t-}\big] = 0 \bigg\rbrace  \cap L_2(P). 
\end{align*}\end{proof}

The following lemma deals with the tangent spaces
\(\mathscr{T}_{\Lambda^a}\), \(\mathscr{T}_{\Lambda^\ell}\),
\(\mathscr{T}_{\Lambda^d}\).

\begin{lemma}
  The tangent space \(\mathscr{T}_{\Lambda^x}\) associated with the
  intensity \(\Lambda^x\) of \(N^x\) is given by: 
  \begin{align*}
    \mathscr{T}_{\Lambda^x} (P)
    & = \bigg\lbrace \int_0^{\tmax} h_t\,
      \big( N^{x}(dt) - \Lambda^x(dt)\big)\, :\,
      h_t
      \in \F_{t-}
      \bigg\rbrace \cap L_2(P), \quad x = a, \ell, d.
  \end{align*}
  \label{lemma:tangent:lambda}
\end{lemma}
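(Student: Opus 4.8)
The plan is to identify $\mathscr{T}_{\Lambda^x}(P)$ with the $L_2(P)$-closed linear span of the scores of one-dimensional submodels that fluctuate only the $\Lambda^x$-factor of the likelihood \eqref{eq:like}, keeping $\mu_{L_0}$, $(\mu_t)_t$, $\Lambda^y$ for $y\neq x$ and the $G$-part fixed. Because the likelihood factorizes into separate terms for these components, such a fluctuation contributes to the log-likelihood only through its own factor, so the entire computation reduces to differentiating that single term.

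For a bounded predictable process $h=(h_t)_t$, i.e.\ $h_t\in\F_{t-}$, I would take the exponential tilt $d\Lambda^x_\eps(t\,\vert\,\F_{t-})=e^{\eps h_t}\,d\Lambda^x(t\,\vert\,\F_{t-})$ (for atoms of $\Lambda^x$ one uses instead a logistic tilt of $d\Lambda^x/(1-d\Lambda^x)$). For $\eps$ in a neighbourhood of $0$ this is again a càdlàg cumulative intensity of finite sectional variation with $d\Lambda^x_\eps\le 1$, so the associated $P_\eps$ lies in $\mathcal{M}$ (hence in $\mathcal{M}_G$) and passes through $P$ at $\eps=0$. In the absolutely continuous case (the one carried through in Section~\ref{ssec:tmle:intensities}), the contribution of the $\Lambda^x$-factor to $\log dP_\eps$ is
\[
\ell^x(\eps)=\int_0^{\tmax}\big(\log\lambda^x(t)+\eps h_t\big)\,dN^x(t)-\int_0^{\tmax}e^{\eps h_t}\,\lambda^x(t)\,dt,
\]
with the product-integral analogue of \citep[][Section~II.7]{andersen2012statistical} in general. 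Differentiating at $\eps=0$ yields the score
\[
\frac{d}{d\eps}\,\ell^x(\eps)\,\big\vert_{\eps=0}=\int_0^{\tmax}h_t\,dN^x(t)-\int_0^{\tmax}h_t\,d\Lambda^x(t)=\int_0^{\tmax}h_t\,\big(N^x(dt)-\Lambda^x(dt)\big),
\]
which is the stochastic integral of the predictable process $h$ against the counting-process martingale $M^x=N^x-\Lambda^x$ under $P$. Since $\tmax<\infty$ and $h$ is bounded and predictable, this integral is a square-integrable mean-zero martingale evaluated at $\tmax$, hence an element of $L_2(P)$ and a bona fide score; thus $\int_0^{\tmax}h_t\,dM^x(t)\in\mathscr{T}_{\Lambda^x}(P)$ for every bounded predictable $h$.

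Taking $L_2(P)$-closures, and using that bounded predictable integrands are dense (in the $L_2$ sense attached to the predictable variation of $M^x$) among all predictable integrands whose stochastic integral is square-integrable, gives the inclusion $\supseteq$ in the asserted identity. For the reverse inclusion, any element of $\mathscr{T}_{\Lambda^x}(P)$ is a limit of scores of submodels that fluctuate only the $\Lambda^x$-factor; running the differentiation above for an arbitrary smooth perturbation $\Lambda^x_\eps$ shows such a score equals $\int_0^{\tmax}k_t\,\big(N^x(dt)-\Lambda^x(dt)\big)$ with $k_t:=\frac{d}{d\eps}\log\!\big(d\Lambda^x_\eps(t)/d\Lambda^x(t)\big)\big\vert_{\eps=0}\in\F_{t-}$, which is of the stated form. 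Combining the two inclusions proves the lemma simultaneously for $x=a,\ell,d$.

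The routine part is the score calculation, which is immediate once the product-integral form of the likelihood is available. The main obstacle I anticipate is the closure step — certifying that moving from the bounded predictable integrands that literally arise as scores of regular parametric submodels to all predictable integrands with square-integrable stochastic integral stays inside the tangent space — together with checking that the perturbed $P_\eps$ genuinely remains in $\mathcal{M}$ once smoothness and sectional-variation restrictions are imposed on $\mathcal{Q}$; both are standard but require the counting-process $L_2$-martingale machinery of \citep[][Chapter~II]{andersen2012statistical}.
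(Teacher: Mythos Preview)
Your proposal is correct and takes essentially the same approach as the paper: the paper uses exactly the exponential tilt submodel $\lambda^x_{\eps,h}(t\mid\F_{t-})=\exp(\eps h_t)\lambda^x(t\mid\F_{t-})$ for $h_t\in\F_{t-}$, computes the log-likelihood contribution, and differentiates at $\eps=0$ to obtain the score $\int_0^{\tmax}h_t\,(N^x(dt)-\Lambda^x(dt))$. Your treatment is in fact more careful than the paper's, which simply declares the tangent space after the score calculation without discussing the $L_2$-closure or the reverse inclusion that you flag.
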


\begin{proof} For \(x=a, \ell, d\), let
  \(h_t \, : \, \mathcal{O} \rightarrow \R\) be such that
  \( h_t \in \F_{t-}\) and consider the class of submodels
\begin{align*}
  \lambda^x_{\eps, h_t} (t \, \vert \, \F_{t-})
  = \exp\big( \eps h_t (o) \big) \lambda^x(t \, \vert \, \F_{t-}). 
\end{align*}
These are valid submodels since
\(\lambda^x_{0, h_t} (t \, \vert \, \F_{t-})= \lambda^x(t \, \vert \,
\F_{t-})\) and \(\lambda^x_{\eps, h_t} (t \, \vert \, \F_{t-})>0 \)
for all \(t\). The contribution to the log-likelihood is
\begin{align*}
  \log {dP_{ \eps, h_t}}(o)  =  \int_0^{\tmax} \big(  \eps h_t (o) +
  \log \lambda^x(t \, \vert\, \F_{t-}) \big)
  N^{x}(dt)\,  - \\ 
  \int_0^{\tmax}\exp \big( \eps h_t (o) \big)
  \, \lambda^x(t \, \vert \, \F_{t-}) dt, 
\end{align*}
and we derive the score
\begin{align*}
  \frac{d}{d\eps} \log {dP_{ \eps, h_t}}(o) \bigg\vert_{\eps=0}  
  & = \frac{d}{d\eps} \bigg( \int_0^{\tmax} \big(  \eps h_t (o) +
    \log \lambda^x(t \, \vert\, \F_{t-})dt \big)
    N^{x}(dt) \, - \\
  &\qquad\qquad\qquad\quad     \int_0^{\tmax}\exp \big( \eps h_t (o) \big)
    \, \lambda^x(t \, \vert \, \F_{t-})dt \bigg)\bigg\vert_{\eps=0} 
  \\
  &  = \int_0^{\tmax}h_t(o) \, N^{x}(dt) - \int_0^{\tmax}h_t(o)\, \lambda^x(t \, \vert \, \F_{t-})dt
    . 
\end{align*}
Thus, the tangent space is given by: 
\begin{align*}
  \mathscr{T}_{\Lambda^x} (P) &= \bigg\lbrace \int_0^{\tmax}h_t\,
                                \big( N^{x}(dt) - \lambda^x(t \, \vert \, \F_{t-})dt\big)\, :\,
                                h_t
                                \in \F_{t-}
                                \bigg\rbrace   , 
\end{align*}
for \(x=a, \ell, d\). \end{proof}

\subsection{Projection onto tangent spaces}
\label{sec:deriving:eff:ic}

We have characterized the tangent space \(\mathscr{T}_G\) and provided
an initial gradient \(D^{G^*}_{\mathrm{IPW}}(P)\). We can now compute
the canonical gradient by projecting \(D^{G^*}_{\mathrm{IPW}}(P)\)
onto \(\mathscr{T}_G\).

We denote the projection onto a tangent space \(\mathscr{T}\) by
\(\Pi( \cdot \, \vert\, \mathscr{T})\).  The orthogonal sum
decomposition of \(\mathscr{T}_G\) implies that:
\begin{align} 
  \begin{split}
    \Pi( D^{G^*}_{\mathrm{IPW}}(P) \, \vert\, \mathscr{T}_G(P)) =
    \Pi(D^{G^*}_{\mathrm{IPW}}(P) \, \vert\, \mathscr{T}_{\mu_{L_0}}(P)) +
    \Pi(D^{G^*}_{\mathrm{IPW}}(P) \, \vert\, \mathscr{T}_{\mu}(P)) \\ +
    \sum_{x=a,\ell,d} \Pi(D^{G^*}_{\mathrm{IPW}}(P) \, \vert\,
    \mathscr{T}_{\Lambda^x}(P)).  \end{split}
  \label{eq:sum:projections}
\end{align}
The individual projections are given in the following lemmas.

\begin{lemma}
  The projection of \(D^{G^*}_{\mathrm{IPW}} (P)\) onto the tangent
  space \(\mathscr{T}_{\mu_{L_0}}(P)\) is given by:
\begin{align*}
  \Pi ( D^{G^*}_{\mathrm{IPW}} (P) \, \vert \, \mathscr{T}_{\mu_{L_0}} (P) ) =
  \EE_{P^{G^*}} [Y  \, \vert \, \F_0] - \Psi^{G^*} (P). 
\end{align*}
\label{lemma:project:L0}
\end{lemma}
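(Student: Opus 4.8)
The plan is to invoke the standard formula for the projection onto a space of mean-zero functions of a single variable. For any \(D\in L_2(P)\), the projection onto \(\mathscr{T}_{\mu_{L_0}}(P)=\{h\in\F_0:\EE_P[h]=0\}\cap L_2(P)\) equals \(\EE_P[D\mid\F_0]-\EE_P[D]\). Applying this with \(D=D^{G^*}_{\mathrm{IPW}}(P)\), and using that the IPW influence curve has \(P\)-mean zero — which is immediate from the reweighting identity \(\EE_P\big[\Prodi_{t\le\tmax}(dG^*_t/dG_t)\,Y\big]=\EE_{P^{G^*}}[Y]=\Psi^{G^*}(P)\) already recorded in Section \ref{sec:initial:gradient} — the projection collapses to
\[
\Pi\big(D^{G^*}_{\mathrm{IPW}}(P)\mid\mathscr{T}_{\mu_{L_0}}(P)\big)=\EE_P\Big[\Prodi_{t\le\tmax}\frac{dG^*_t}{dG_t}\,Y \mid \F_0\Big]-\Psi^{G^*}(P).
\]

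The crux is then to identify the conditional expectation on the right as \(\EE_{P^{G^*}}[Y\mid\F_0]\). I would prove the identity \(\EE_P\big[\Prodi_{t\le\tmax}(dG^*_t/dG_t)\,Y\mid\F_0\big]=\EE_{P^{G^*}}[Y\mid\F_0]\) by testing against an arbitrary bounded \(\F_0\)-measurable \(f\): using Assumption \ref{ass:identifiability} so that \(\Prodi_{t\le\tmax}dG^*_t/dG_t=dP_{Q,G^*}/dP_{Q,G}\) is a genuine Radon–Nikodym derivative, one has
\[
\EE_P\Big[f\,\Prodi_{t\le\tmax}\frac{dG^*_t}{dG_t}\,Y\Big]=\EE_{P^{G^*}}[f\,Y]=\EE_{P^{G^*}}\big[f\,\EE_{P^{G^*}}[Y\mid\F_0]\big]=\EE_P\big[f\,\EE_{P^{G^*}}[Y\mid\F_0]\big],
\]
where the last equality uses that the intervention replaces only the \(G\)-components and leaves \(\mu_{L_0}\) untouched, so the marginal law of \(L_0\) (equivalently of \(\F_0=\sigma(L_0)\)) is the same under \(P\) and \(P^{G^*}\). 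Since this holds for every such \(f\), the conditional identity follows \(P\)-a.s., and substituting back gives \(\Pi(D^{G^*}_{\mathrm{IPW}}(P)\mid\mathscr{T}_{\mu_{L_0}}(P))=\EE_{P^{G^*}}[Y\mid\F_0]-\Psi^{G^*}(P)\), as claimed.

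I expect the only delicate point to be precisely this conditional change of measure: one must ensure the reweighting identity survives conditioning on \(\F_0\), which is exactly where the structural fact that the intervention does not perturb the time-zero marginal \(\mu_{L_0}\) enters (together with positivity, so that the densities and hence all the expectations above are well defined and the relevant quantities are in \(L_2(P)\)). The remaining ingredients — the projection formula onto mean-zero functions of \(L_0\) from Lemma \ref{lemma:tangent:q0} and the mean-zero property of \(D^{G^*}_{\mathrm{IPW}}(P)\) — are routine and already implicit in the construction of the initial gradient.
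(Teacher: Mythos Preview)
Your proposal is correct and follows essentially the same route as the paper: both identify the projection as \(\EE_P[D^{G^*}_{\mathrm{IPW}}(P)\mid L_0]-\EE_P[D^{G^*}_{\mathrm{IPW}}(P)]\) and then use the reweighting identity to rewrite this as \(\EE_{P^{G^*}}[Y\mid\F_0]-\Psi^{G^*}(P)\). The only cosmetic difference is that the paper verifies the projection formula directly by checking orthogonality against an arbitrary \(S\in\mathscr{T}_{\mu_{L_0}}(P)\), whereas you invoke it as standard; your testing argument for the conditional change of measure is a touch more explicit than the paper's, which simply points back to the unconditional calculation in Section~\ref{sec:initial:gradient}.
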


\begin{proof} Recall that
  \( \mathscr{T}_{\mu_{L_0}} (P) = \big\lbrace h \in \F_0 \, : \,
  \EE_P[ h ] = 0\big\rbrace \cap L_2(P)\). We suggest that the
  projection of \(D^{G^*}_{\mathrm{IPW}}(P)\) onto
  \(\mathscr{T}_{\mu_{L_0}}(P)\) is given by:
\begin{align}
  & \widetilde{\Pi} ( D^{G^*}_{\mathrm{IPW}} (P)\, \vert \, \mathscr{T}_{\mu_{L_0}}(P)) =
    \EE[ D^{G^*}_{\mathrm{IPW}} (P) \, \vert \, L_0 ] - \EE[ D^{G^*}_{\mathrm{IPW}} (P)  ].
    \label{eq:first:projection}
\end{align}
We verify that \eqref{eq:first:projection} is in fact the projection
by first noting that:
\begin{align*}
  & \widetilde{\Pi} ( D^{G^*}_{\mathrm{IPW}} (P)\, \vert \, \mathscr{T}_{\mu_{L_0}}(P)) \in  \mathscr{T}_{\mu_{L_0}} (P).
\end{align*}
Fix an arbitrary element \(S\in \mathscr{T}_{\mu_{L_0}}(P)\). We see
that:
\begin{align*}
  & \EE \Big[ \Big( D^{G^*}_{\mathrm{IPW}} (P)-
    \widetilde{\Pi} ( D^{G^*}_{\mathrm{IPW}} (P)\, \vert \, \mathscr{T}_{\mu_{L_0}}(P))
    \Big) S \Big]  \\
  &\qquad  =  \EE \Big[ \Big( D^{G^*}_{\mathrm{IPW}} (P)-
    \big(  \EE[ D^{G^*}_{\mathrm{IPW}} (P) \, \vert \, L_0 ] - \EE[ D^{G^*}_{\mathrm{IPW}} (P)  ]
    \big) 
    \Big) S \Big]  \\
  &\qquad  =  \EE \big[ \EE[ D^{G^*}_{\mathrm{IPW}} (P) \, \vert \, L_0 ]
    S \big] = \EE \big[ \EE[ D^{G^*}_{\mathrm{IPW}} (P) \, \vert \, L_0 ] \EE  [
    S \, \vert \, L_0] \big]   = 0. 
\end{align*}
Hence,
\(\Pi ( D^{G^*}_{\mathrm{IPW}} (P)\, \vert \,
\mathscr{T}_{\mu_{L_0}}(P)) = \widetilde{\Pi} ( D^{G^*}_{\mathrm{IPW}}
(P)\, \vert \, \mathscr{T}_{\mu_{L_0}}(P)) \).  Finally we rewrite
\eqref{eq:first:projection} as follows:
\begin{align*}
  &
    \widetilde{\Pi} ( D^{G^*}_{\mathrm{IPW}} (P)\, \vert \, \mathscr{T}_{\mu_{L_0}}(P)) \\
  & \quad = \EE\bigg[
\Prodi_{t\le \tmax}    \frac{d{G}_{t}^{*}(O)}{d{G}_{t}(O)} Y -
    \Psi^{G^*}(P) \, \bigg\vert \, L_0 \bigg] -
    \EE\bigg[ \Prodi_{t\le \tmax} \frac{d{G}_{t}^{*}(O)}{d{G}_{t}(O)} Y - \Psi^{G^*}(P) \bigg]\\
  & \quad = \EE_{P^{G^*}} [Y \, \vert\, L_0] - \Psi^{G^*}(P),
\end{align*}
noting that \(\F_0 = \sigma(L_0)\) and using the same technique as in
Section \ref{sec:initial:gradient}. This completes the
proof. \end{proof}

\begin{lemma}
  The projection of \(D^{G^*}_{\mathrm{IPW}} (P)\) onto the tangent
  space \(\mathscr{T}_{\mu}(P)\) is given by: 
\begin{align*}
  &\Pi ( D^{G^*}_{\mathrm{IPW}} (P) \, \vert \, \mathscr{T}_{\mu} (P) )
  \\
  &\,\, = \int_0^{\tmax} \Big(\EE [D^{G^*}_{\mathrm{IPW}} (P) \, \vert
    \,
    L(t),N^{\ell}(t),\F_{t-}] - \EE [D^{G^*}_{\mathrm{IPW}} (P) \, \vert \, N^{\ell}(t),\F_{t-}] \Big) N^{\ell}(dt). 
\end{align*}
\label{lemma:project:Lt}
\end{lemma}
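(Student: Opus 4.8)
The plan is to follow the same three-step template as in the proof of Lemma~\ref{lemma:project:L0}: write down an explicit candidate, check that it lies in the tangent space $\mathscr{T}_{\mu}(P)$ described in Lemma~\ref{lemma:tangent:qt}, and verify the orthogonality relation that characterises an orthogonal projection. The candidate is the asserted right-hand side, which I abbreviate $\widetilde{\Pi} = \int_0^{\tmax} h^*_t\, N^{\ell}(dt)$ with
\[
  h^*_t = \EE[D^{G^*}_{\mathrm{IPW}}(P)\mid L(t),N^{\ell}(t),\F_{t-}] - \EE[D^{G^*}_{\mathrm{IPW}}(P)\mid N^{\ell}(t),\F_{t-}].
\]
Throughout I take regular versions of these conditional expectations that are jointly measurable in $(t,\omega)$, so the stochastic integral is well defined, and I use that $D^{G^*}_{\mathrm{IPW}}(P)\in L_2(P)$, which follows from $Y\in[0,1]$ and Assumption~\ref{ass:identifiability}.

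Membership $\widetilde{\Pi}\in\mathscr{T}_{\mu}(P)$ is the routine step. By construction $h^*_t$ is $\sigma(L(t),N^{\ell}(t),\F_{t-})$-measurable, and the tower property gives $\EE[h^*_t\mid N^{\ell}(t),\F_{t-}]=0$; these are exactly the two defining conditions in Lemma~\ref{lemma:tangent:qt}. Square-integrability of $\widetilde{\Pi}$ follows from conditional Jensen together with the fact that $N^{\ell}$ has only finitely many jumps on $[0,\tmax]$.

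The substance is orthogonality: for every $S=\int_0^{\tmax} h_t\, N^{\ell}(dt)\in\mathscr{T}_{\mu}(P)$ one must show $\EE[(D^{G^*}_{\mathrm{IPW}}(P)-\widetilde{\Pi})\,S]=0$. I would reduce this to a single auxiliary identity: for any $W\in L_2(P)$ and any $h_t$ with $\EE[h_t\mid N^{\ell}(t),\F_{t-}]=0$,
\[
  \EE\Big[W\int_0^{\tmax} h_t\, N^{\ell}(dt)\Big] = \EE\Big[\int_0^{\tmax} h_t\,\EE[W\mid L(t),N^{\ell}(t),\F_{t-}]\, N^{\ell}(dt)\Big].
\]
This is proved by writing $\int_0^{\tmax} h_t\, N^{\ell}(dt) = \sum_{k\ge 1} h_{T^{\ell}_k}\,\1\{T^{\ell}_k\le\tmax\}$ as a (finite) sum over the monitoring times $T^{\ell}_1<T^{\ell}_2<\cdots$ of $N^{\ell}$ and conditioning the $k$-th summand on $\sigma(\F_{T^{\ell}_k-},L(T^{\ell}_k),N^{\ell}(T^{\ell}_k))$, with respect to which $h_{T^{\ell}_k}$ and the indicator are measurable and the conditional expectation of $W$ agrees with the process $\EE[W\mid L(t),N^{\ell}(t),\F_{t-}]$ at $t=T^{\ell}_k$. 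Given the identity, take $W=D^{G^*}_{\mathrm{IPW}}(P)$; the term $\EE[D^{G^*}_{\mathrm{IPW}}(P)\mid N^{\ell}(t),\F_{t-}]$ may be dropped because $\EE[h_t\mid N^{\ell}(t),\F_{t-}]=0$ forces $\EE[\int_0^{\tmax} g_t h_t\, N^{\ell}(dt)]=0$ for every $g_t\in\sigma(N^{\ell}(t),\F_{t-})$ (same jump-time conditioning), so $\EE[D^{G^*}_{\mathrm{IPW}}(P)\,S]=\EE[\int_0^{\tmax} h_t h^*_t\, N^{\ell}(dt)]$. Taking instead $W=\widetilde{\Pi}$ and observing that $\EE[\widetilde{\Pi}\mid L(t),N^{\ell}(t),\F_{t-}]-h^*_t$ is $\sigma(N^{\ell}(t),\F_{t-})$-measurable — the contributions of earlier monitoring times are $\F_{t-}$-measurable, the contribution of the jump at $t$ is exactly $h^*_t$, and later ones have conditional mean zero given $(L(t),N^{\ell}(t),\F_{t-})$ — gives $\EE[\widetilde{\Pi}\,S]=\EE[\int_0^{\tmax} h_t h^*_t\, N^{\ell}(dt)]$ as well. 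Subtracting, $\EE[(D^{G^*}_{\mathrm{IPW}}(P)-\widetilde{\Pi})\,S]=0$, so $\widetilde{\Pi}$ is the projection, which is the claim.

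The main obstacle is making the jump-time conditioning behind the auxiliary identity rigorous. Because $N^{\ell}(dt)$ is not a predictable random measure one cannot simply pass to the Doob--Meyer compensator; the argument instead relies on the marked-point-process structure, and what makes it go through — the recurring structural feature of the paper — is that each subject contributes only finitely many monitoring times, so the stochastic integral is genuinely a finite sum that can be treated term by term. The remaining care concerns the filtration $\F_{T^{\ell}_k-}$ at the stopping times $T^{\ell}_k$ and the joint measurability of the conditional-expectation processes, but no further martingale calculus is required.
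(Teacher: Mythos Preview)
Your proof is correct and follows essentially the same approach as the paper: propose the candidate $\widetilde{\Pi}$, verify membership in $\mathscr{T}_\mu(P)$, and establish orthogonality via iterated expectations. The paper's orthogonality step is terser---it computes $\EE[\widetilde{\Pi}\,S]$ directly and shows it equals $\EE[D^{G^*}_{\mathrm{IPW}}(P)\,S]$ in a single iterated-expectations display---whereas your auxiliary-identity formulation and explicit jump-time decomposition make rigorous the marked-point-process conditioning that the paper takes for granted, but the underlying argument is the same.
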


\begin{proof} We suggest that the projection is given by:
  \begin{align*}
    \begin{split}
    &\widetilde{\Pi} ( D^{G^*}_{\mathrm{IPW}} (P) \, \vert \, \mathscr{T}_{\mu} (P) )
    \\
    &\, = \int_0^{\tmax} \Big(\EE [D^{G^*}_{\mathrm{IPW}} (P) \, \vert
      \,
      L(t), N^{\ell}(t),\F_{t-}] - \EE [D^{G^*}_{\mathrm{IPW}} (P) \, \vert \, N^{\ell}(t),\F_{t-}] \Big) N^{\ell}(dt).
    \end{split}
  \end{align*}
  To verify that
  \(\widetilde{\Pi} ( D^{G^*}_{\mathrm{IPW}} (P) \, \vert \,
  \mathscr{T}_{\mu} (P) )\) is in fact the projection, first note that
  \(\widetilde{\Pi} \big( D^{G^*}_{\mathrm{IPW}} (P) \, \vert \,
  \mathscr{T}_{\mu} (P) \big)\in \mathscr{T}_{\mu} (P)\). What remains to
  be shown is that:
\begin{align}
  \EE \left[\Big( D^{G^*}_{\mathrm{IPW}} (P)-
  \widetilde{\Pi} \big( D^{G^*}_{\mathrm{IPW}} (P) \, \vert \, \mathscr{T}_{\mu} (P) \big)\Big)  S \right]=0,
  \label{eq:covariance:0:Lt}
\end{align}
for any \(S\in \mathscr{T}_{\mu}(P)\). Since
\(S\in \mathscr{T}_{\mu}(P)\), we can write
\begin{align*}
  S = \int_0^{\tmax} h^S_t \, N^{\ell}(dt) , 
\end{align*}
for some \(h^S_t \in \sigma( L(t), N^{\ell}(t),\F_{t-})\) such that
\(\EE_{P} \big[ h^S_t \, \big\vert \, N^{\ell}(t),\F_{t-} \big] =
0\). Now, the second term of \eqref{eq:covariance:0:Lt} can be written
as follows
\begin{align*}
  &\EE\left[ \widetilde{\Pi} \big( D^{G^*}_{\mathrm{IPW}} (P) \, \vert \, \mathscr{T}_{\mu} (P) \big)
    \, S \right]
  \\
  &\quad = \EE\bigg[  \int_0^{\tmax} \EE\Big[ \Big(\EE [D^{G^*}_{\mathrm{IPW}} (P) \, \vert
    \, 
    L(t), N^{\ell}(t),\F_{t-}
    ]  \, - \\
  &\qquad\qquad\qquad\qquad\qquad\qquad\quad\,\,
    \EE [D^{G^*}_{\mathrm{IPW}} (P) \, \vert \, N^{\ell}(t),\F_{t-}] \Big)\, h^S_t
    \, \Big\vert \, N^{\ell}(t),\F_{t-} \Big] N^{\ell}(dt)  \bigg]\\
  &\quad =   \EE\bigg[ \int_0^{\tmax}\Big(\EE  \big[ \EE [h^S_t D^{G^*}_{\mathrm{IPW}} (P) \, \vert
    \, L(t), N^{\ell}(t),\F_{t-} ]\big] \, - \\
  &\qquad\qquad\qquad\qquad\qquad\quad\,
    \EE\big[\EE [D^{G^*}_{\mathrm{IPW}} (P) \, \vert \, N^{\ell}(t),\F_{t-}] \underbrace{
    \EE[h^S_t \, \vert \, N^{\ell}(t),\F_{t-}]}_{=0}
    \big]\Big) N^{\ell}(dt) \bigg] \\
  & \quad = \EE \left[ \int_0^{\tmax} h^S_t D^{G^*}_{\mathrm{IPW}} (P) \, N^{\ell}(dt)\right]
    = \EE \big[ D^{G^*}_{\mathrm{IPW}}(P)  S \big],
\end{align*}
where we have used the law of iterated expectations. Hence, we
conclude that
\(\widetilde{\Pi} \big( D^{G^*}_{\mathrm{IPW}} (P) \, \vert \,
\mathscr{T}_{\mu} (P) \big) = \Pi \big( D^{G^*}_{\mathrm{IPW}} (P) \,
\vert \, \mathscr{T}_{\mu} (P) \big)\).  \end{proof}

\begin{lemma}
  The projection of \(D^{G^*}_{\mathrm{IPW}} (P)\) onto the tangent
  space \(\mathscr{T}_{\Lambda^x}(P)\) is given by:
\begin{align*}
  &\Pi ( D^{G^*}_{\mathrm{IPW}} (P) \, \vert \, \mathscr{T}_{\Lambda^x} (P) )
    = \int_0^{\tmax} \Big(\EE [D^{G^*}_{\mathrm{IPW}} (P) \, \vert
    \, \Delta N^x(t)=1,
    \F_{t-}] \, \,- \\
  &\qquad\qquad\qquad \EE [D^{G^*}_{\mathrm{IPW}} (P) \, \vert \, \Delta N^x(t)=0,
    \F_{t-}] \Big) \big( N^{x}(dt) - \Lambda^x(dt)\big),
\end{align*}
for \(x=a,\ell,d\).
\label{lemma:project:lambda}
\end{lemma}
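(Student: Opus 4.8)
The plan is to follow the same two-step verification used for Lemma~\ref{lemma:project:L0} and Lemma~\ref{lemma:project:Lt}: write down a candidate projection, check that it lies in $\mathscr{T}_{\Lambda^x}(P)$, and then verify that the residual is orthogonal to every element of $\mathscr{T}_{\Lambda^x}(P)$; uniqueness of the orthogonal projection then completes the identification. Abbreviate $D := D^{G^*}_{\mathrm{IPW}}(P)$. The candidate I would take — call it $\widetilde{\Pi}$ — is the displayed right-hand side, $\widetilde{\Pi} = \int_0^{\tmax} g^x_t\,\big(N^x(dt)-\Lambda^x(dt)\big)$, with
\[
  g^x_t := \EE\big[D \,\vert\, \Delta N^x(t)=1,\F_{t-}\big] - \EE\big[D \,\vert\, \Delta N^x(t)=0,\F_{t-}\big].
\]
Since $g^x_t$ is $\F_{t-}$-measurable by construction, and since $\int_0^{\tmax} g^x_t\, dM^x(t)$ is a square-integrable transform of the martingale $M^x = N^x-\Lambda^x$ — this uses $Y\in[0,1]$, square-integrability of the weight $\prodi_{t\le\tmax} dG^*_t/dG_t$, and the fact that each subject contributes only finitely many event times — membership $\widetilde{\Pi}\in\mathscr{T}_{\Lambda^x}(P)$ is immediate.

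Next I would establish the orthogonality. Fix an arbitrary $S = \int_0^{\tmax} h^S_t\,\big(N^x(dt)-\Lambda^x(dt)\big) \in \mathscr{T}_{\Lambda^x}(P)$ with $h^S_t \in \F_{t-}$, and show that $\EE[D\,S]$ and $\EE[\widetilde{\Pi}\,S]$ both equal $\EE\big[\int_0^{\tmax} g^x_t h^S_t\, d\langle M^x\rangle_t\big]$, where $\langle M^x\rangle_t = \int_0^t\big(1-\Delta\Lambda^x(s)\big)\,\Lambda^x(ds)$ is the predictable quadratic variation of $M^x$ \citep{andersen2012statistical}. For $\EE[\widetilde{\Pi}\,S]$ this is the isometry for stochastic integrals of predictable integrands against $M^x$: writing both integrals over an infinitesimal partition, the off-diagonal terms vanish because $\EE[\Delta M^x(t)\,\vert\,\F_{t-}]=0$, while the diagonal terms contribute $\EE\big[(\Delta M^x(t))^2\,\vert\,\F_{t-}\big]=\Delta\Lambda^x(t)\big(1-\Delta\Lambda^x(t)\big)$. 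For $\EE[D\,S]$ I would argue term by term along the partition: since $h^S_t\,\Delta M^x(t)$ is $\F_t$-measurable, the tower property gives $\EE[D\,h^S_t\,\Delta M^x(t)] = \EE\big[h^S_t\,\EE[D\,\Delta M^x(t)\,\vert\,\F_{t-}]\big]$, and the inner conditional expectation is evaluated by decomposing $\EE[D\,\vert\,\F_{t-}]$ over $\{\Delta N^x(t)=0\}$ and $\{\Delta N^x(t)=1\}$ — conditionally on $\F_{t-}$, the increment $\Delta N^x(t)$ is Bernoulli with success probability $\Delta\Lambda^x(t)$ — which after elementary algebra equals $g^x_t\,\Delta\Lambda^x(t)\big(1-\Delta\Lambda^x(t)\big)$. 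Summing over the partition in both computations yields the common value $\EE\big[\int_0^{\tmax} g^x_t h^S_t\, d\langle M^x\rangle_t\big]$, hence $\EE[(D-\widetilde{\Pi})\,S]=0$ for every $S\in\mathscr{T}_{\Lambda^x}(P)$, and therefore $\widetilde{\Pi} = \Pi\big(D\,\vert\,\mathscr{T}_{\Lambda^x}(P)\big)$.

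The hard part will be making these infinitesimal manipulations rigorous within the product-integral framework: justifying the passage from Riemann--Stieltjes sums to the stochastic integral, bounding the second-order remainder in the Bernoulli expansion of $\EE[D\,\Delta M^x(t)\,\vert\,\F_{t-}]$, and carrying the atom corrections $1-\Delta\Lambda^x(t)$ consistently through both sides so that the same quantity surfaces in each. Because each subject contributes only finitely many monitoring times and $Y$ is bounded, all the martingales in play are square-integrable, so these steps are standard; the product-integral representation of $\prodi_{t\le\tmax} dG^*_t/dG_t$ entering $D$ also guarantees that the conditioning defining $g^x_t$ is well posed. Finally, this identity — together with those of Lemma~\ref{lemma:project:L0} and Lemma~\ref{lemma:project:Lt} — is what gets summed over $x=a,\ell,d$ to assemble the canonical gradient of Theorem~\ref{thm:eff:ic}.
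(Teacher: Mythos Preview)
Your proposal is correct and follows essentially the same route as the paper: define the candidate $\widetilde\Pi$, verify membership in $\mathscr{T}_{\Lambda^x}(P)$, and show $\EE[D\,S]=\EE[\widetilde\Pi\,S]$ for every $S$ by reducing both sides to $\EE\big[\int_0^{\tmax} g^x_t\,h^S_t\,(1-\Delta\Lambda^x(t))\,\Lambda^x(dt)\big]$ via the martingale isometry and the Bernoulli decomposition of $\Delta N^x(t)$ given $\F_{t-}$. The paper's version differs only cosmetically in the algebra for $\EE[D\,S]$ (it first conditions on $(\Delta N^x(t),\F_{t-})$, uses the mean-zero property of $\int H_t^0\,dM^x$ to rewrite the $u=0$ term, and then simplifies $\Delta N^x(t)\,M^x(dt)$), arriving at the same predictable-variation integral.
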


\begin{proof} Define:
  \begin{align}
     \begin{split}
  &\widetilde{\Pi} ( D^{G^*}_{\mathrm{IPW}} (P) \, \vert \, \mathscr{T}_{\Lambda^x} (P) )
    = \int_0^{\tmax} \Big(\EE [D^{G^*}_{\mathrm{IPW}} (P) \, \vert
    \, \Delta N^x(t)=1,
    \F_{t-}] \, \,- \\
  &\qquad\qquad\qquad \EE [D^{G^*}_{\mathrm{IPW}} (P) \, \vert \, \Delta N^x(t)=0,
  \F_{t-}] \Big) \big( N^{x}(dt) - \Lambda^x(dt)\big).
\end{split} \label{eq:tildepi:lambda}
  \end{align}
  To verify that
  \(\widetilde{\Pi} ( D^{G^*}_{\mathrm{IPW}} (P) \, \vert \,
  \mathscr{T}_{\Lambda^x} (P) )\) is in fact the projection, first
  note that
  \( \widetilde{\Pi} \big( D^{G^*}_{\mathrm{IPW}} (P) \, \vert \,
  \mathscr{T}_{\Lambda^x} (P) \big) \in
  \mathscr{T}_{\Lambda^x}(P)\). What remains to be shown is that
  \(D^{G^*}_{\mathrm{IPW}}- \widetilde{\Pi} \big(
  D^{G^*}_{\mathrm{IPW}} (P) \, \vert \, \mathscr{T}_{\Lambda^x}(P)
  \big)\) is orthogonal to all \(S\in \mathscr{T}_{\Lambda^x}(P)\),
  i.e.,
\begin{align}
  \EE \left[\Big( D^{G^*}_{\mathrm{IPW}} (P)-
  \widetilde{\Pi} \big( D^{G^*}_{\mathrm{IPW}} (P) \, \vert \, \mathscr{T}_{\Lambda^x}(P) \big)\Big)  S \right]=0.
  \label{eq:covariance:0:lambda}
\end{align}
Since \(S\in \mathscr{T}_{\Lambda^x}(P)\), we can write:
\begin{align*}
  S = \int_0^{\tmax} h^S_t \, M^{x}(dt) , 
\end{align*}
for some \(h^S_t \in \F_{t-}\).

For the proof, we will use that \cite[see, e.g., ][Theorem 2.6.1]
{fleming2011counting}:
\begin{align*}
  \EE[ M^x(dt)^2 \mid \F_{t-} ]=   \langle M^x, M^x\rangle (dt) =  (1- \Delta \Lambda^x(t))
  d\Lambda^x(t), 
\end{align*}
where \(\Delta \Lambda^x(t) = \Lambda^x(t) - \Lambda^x(t-)
\). Furthermore, by \citet[][Theorem 2.4.4] {fleming2011counting},
\begin{align*}
  \EE \bigg[ \int_0^t H_1 (s) dM^x(s) \int_0^t H_2(u) dM^x(u) \bigg]
  = \EE \bigg[ \int_0^t H_1(s) H_2(s) \langle M^x, M^x\rangle (ds) \bigg], 
\end{align*}
for \(H_1 (t), H_2(t) \in \F_{t-}\).

Applying this to the second term of \eqref{eq:covariance:0:lambda}, we
see that
\begin{align}
  \begin{split}
    &\EE \left[ \widetilde{\Pi} \big( D^{G^*}_{\mathrm{IPW}} (P) \,
      \vert \, \mathscr{T}_{\Lambda^x}(P) \big) S \right] = \EE\bigg[
    \int_0^{\tau} \Big(\EE [D^{G^*}_{\mathrm{IPW}} (P) \, \vert \,
    \Delta N^x(t)=1,
    \F_{t-}]  \\
    &\qquad\qquad\quad -\, \EE [D^{G^*}_{\mathrm{IPW}} (P) \,
    \vert \, \Delta N^x(t)=0, \F_{t-}] \Big) h^S_t \, \big( 1-\Delta
    \Lambda^x(t)\big) \Lambda^x (dt)\bigg]
    .\end{split} \label{eq:first:term:11}
\end{align}
We want to show that
\begin{align*}
  \EE \left[D^{G^*}_{\mathrm{IPW}} (P)S \right] =
  \EE \left[ 
    \widetilde{\Pi} \big( D^{G^*}_{\mathrm{IPW}} (P) \, \vert \,
    \mathscr{T}_{\Lambda^x}(P) \big)  S \right] .
\end{align*}
We now rewrite the left hand side of the previous display as follows:
\begin{align*}
  & \EE \left[D^{G^*}_{\mathrm{IPW}} (P)S \right] \\
  &\quad =
    \EE \bigg[  D^{G^*}_{\mathrm{IPW}} (P) \int_0^\tau h^S_t M^x(dt) \bigg]
    =    \EE \bigg[ \int_0^\tau  D^{G^*}_{\mathrm{IPW}} (P) h^S_t M^x(dt) \bigg]; 
    \intertext{for small \(\delta>0\) we apply iterated expectations }
  &\quad =  \EE \bigg[  \int_0^{\tau}   h^S_t 
    \EE [ D^{G^*}_{\mathrm{IPW}} (P)   \big(N^x(dt)- \Lambda^x(dt)\big) \, \big\vert
    \, N^x(t+\delta) - N^x(t-),
    \F_{t-}]  
    \bigg] \\
  &\quad =  \EE \bigg[  \int_0^{\tau}   h^S_t  \bigg(\sum_{u= 0,1}
    \EE [ D^{G^*}_{\mathrm{IPW}} (P) \, \big\vert
    \, N^x(t+\delta) - N^x(t-) =u,
    \F_{t-}]
  \\[-0.2em]
  &\qquad\qquad\qquad\qquad\qquad
    \1\lbrace N^x(t+\delta) - N^x(t-) =u\rbrace  \bigg)
    \big(N^x(dt)- \Lambda^x(dt)\big)
    \bigg] ,
    \intertext{ which in turns when letting \(\delta\rightarrow 0\) yields}
  &\quad =  \EE \bigg[  \int_0^{\tau}   h^S_t  \bigg(\sum_{u= 0,1}
    \EE [ D^{G^*}_{\mathrm{IPW}} (P)\, \big\vert
    \,  \Delta N^x(t) =u,
    \F_{t-}]  
  \\[-0.8em]
  &\qquad\qquad\qquad\qquad\quad\quad
    \big( \Delta N^x(t)\big)^u \big( 1-   \Delta N^x(t)\big)^{1-u} \bigg) \big(N^x(dt)- \Lambda^x(dt)\big)
    \bigg].
    \intertext{Now define \(H_t^u := h_t^S \EE [ D^{G^*}_{\mathrm{IPW}} (P)\, \big\vert
    \,  \Delta N^x(t) =u,
    \F_{t-}]\) for \(u=0,1\), and note that \(H_t^u \in \F_{t-}\); then we can write the above formula as }
  & \quad \begin{cases}
    \EE \big[  \int_0^{\tau}  H_t^1 \Delta N^x(t)    \big(N^x(dt)- \Lambda^x(dt)\big)\big], &\text{ if }
    u=1, \\
    \EE \big[  \int_0^{\tau}  H_t^0 \big(1-\Delta N^x(t)\big) \big(N^x(dt)- \Lambda^x(dt)\big)\big]
    , &\text{ if }
    u=0,
  \end{cases}
        \intertext{which, since  \( \int_0^{\tau} H_t^0 (N^x(dt) - \Lambda^x(dt))\) is a
        martingale such that
        \(\EE[ \int_0^{\tau} H_t^0 (N^x(dt) - \Lambda^x(dt))] =0\), is}
  &\quad  \begin{cases}
    \EE \big[  \int_0^{\tau}  H_t^1 \Delta N^x(t)    \big(N^x(dt)- \Lambda^x(dt)\big)\big], &\text{ if }
    u=1, \\
    -\EE \big[  \int_0^{\tau}  H_t^0 \Delta N^x(t) \big(N^x(dt)- \Lambda^x(dt)\big)\big]
    , &\text{ if }
    u=0,
  \end{cases}
        \intertext{so that  we can now continue and write}
  & \quad = \EE \bigg[  \int_0^{\tau} \big( H_t^1-  H_t^0\big) \Delta N^x(t) \big(N^x(dt)- \Lambda^x(dt)
    \big)\bigg] \\
  &\quad \overset{*}{=}
    \EE \bigg[  \int_0^{\tau} \big( H_t^1-  H_t^0\big)  \big(N^x(dt)-\Delta N^x(t)  \Lambda^x(dt)\big)
    \bigg] \\
  &\quad \overset{**}{=}
    \EE \bigg[  \int_0^{\tau} \big( H_t^1-  H_t^0\big)
    \big( \EE [N^x(dt) \mid \F_{t-}]
    -\EE[\Delta N^x(t) \mid \F_{t-}]  \Lambda^x(dt)\big)
    \bigg] \\
  &\quad =
    \EE \bigg[  \int_0^{\tau} \big( H_t^1-  H_t^0\big)
    \big(  \Lambda^x(dt)
    -\Delta \Lambda^x(t)   \Lambda^x(dt)\big)
    \bigg] \\
  &\quad =
    \EE \bigg[  \int_0^{\tau} \big( H_t^1-  H_t^0\big)
    \big( 1
    -\Delta \Lambda^x(t) \big)  \Lambda^x(dt)
    \bigg] \\
  &\quad=  \EE \bigg[  \int_0^{\tau}
    h_t^S \big( \EE [ D^{G^*}_{\mathrm{IPW}} (P)\, \big\vert
    \,  \Delta N^x(t) =1,
    \F_{t-}]  \\[-0.7em]
  &\qquad\qquad\quad - \,  \EE [ D^{G^*}_{\mathrm{IPW}} (P)\, \big\vert
    \,  \Delta N^x(t) =0,
    \F_{t-}] \big)  \big( 1
    -\Delta \Lambda^x(t) \big)  \Lambda^x(dt) \bigg]. 
\end{align*}
At \(*\) we used that: 
\begin{align*}
  \int_0^\tau \Delta N^x(t) \big(N^x(dt)- \Lambda^x(dt)\big)
  &= \sum_{t\le \tau}
    \Delta N^x(t) \big( \Delta N^x(t) - \Delta \Lambda^x(t) \big) \\
  &= \sum_{t\le \tau}
    \Delta N^x(t) - \Delta N^x(t) \Delta \Lambda^x(t) \\
  & = \int_0^\tau  \big(N^x(dt)- \Delta N^x(t) \Lambda^x(dt)\big),
\end{align*}
and at \(**\) we used that \(H^1_t, H^0_t \in \F_{t-}\). We see that
the right hand side displayed above is equal to the right hand side of
\eqref{eq:first:term:11}. Thus, we conclude that
\begin{align*}
  &\EE \left[ 
    \widetilde{\Pi} \big( D^{G^*}_{\mathrm{IPW}} (P) \, \vert \, \mathscr{T}_{\Lambda^x}(P) \big)  S \right]  
    = \EE \big[  D^{G^*}_{\mathrm{IPW}} (P)  S \big],
\end{align*}
which verifies \eqref{eq:covariance:0:lambda}. Hence,
\(\widetilde{\Pi} ( D^{G^*}_{\mathrm{IPW}} (P) \, \vert \,
\mathscr{T}_{\Lambda^x} (P) ) = \Pi ( D^{G^*}_{\mathrm{IPW}} (P) \,
\vert \, \mathscr{T}_{\Lambda^x} (P) )\).
\end{proof}

\subsection{Proof of Theorem \ref{thm:eff:ic}}
\label{sec:proof:thm:eff}

We are now ready to prove Theorem \ref{thm:eff:ic} by applying the
results of Section \ref{sec:deriving:eff:ic}. For this purpose, recall
that \(dG_t(O)\) and \(dQ_t(O)\) denote the conditional measures of
the interventional and non-interventional parts of the likelihood at
time \(t\) given the observed history \(\F_{t-}\). In the proof we
will use the following equation repeatedly:
\begin{align*}
  &\EE_{ P^{G^*} }\big[ Y \mid \F_{t-}\big]
    =
    \int y\,  \Prodi_{s \ge t} dQ_{s}(o) \, dG_s^*(o)  \\
  &=
    \int y\,  \Prodi_{s \ge t} d{G}_{s}^*(o)  \,\Prodi_{s \ge t} \frac{ d{G}_{s}(o)
    }{ d{G}_{s}(o)} \,
    \Prodi_{s \ge t} dQ_{s}(o)  \\
  &=
    \int \bigg( y \, \Prodi_{s \ge t} \frac{d{G}^*_{s}(o)
    }{ d{G}_{s}(o)}  \bigg)
    \Prodi_{s \ge t} dQ_{s}(o) \, dG_s(o)  
    = \EE \bigg[ Y \,\Prodi_{s \ge t}  \frac{ d{G}^*_{s}(O)
    }{d{G}_{s}(O)} \, \bigg\vert \, \F_{t-} \bigg] . 
\end{align*}

\begin{proof} (Theorem \ref{thm:eff:ic}). Collecting the results of
  Lemma \ref{lemma:project:L0}--\ref{lemma:project:lambda}, we can now
  compute the canonical gradient by the decomposition of the
  projection operator from display \eqref{eq:sum:projections} as
  follows:
  \begin{align*}
    &D^*(P) = \Pi( D^{G^*}_{\mathrm{IPW}} (P) \mid
      \mathscr{T}_G) \\
    & \,\, =   \EE_{P^{G^*}} [Y  \, \vert \, \F_0] - \Psi^{G^*} (P)  \\
    & \quad + \int_0^{\tmax} \Big(\EE [D^{G^*}_{\mathrm{IPW}} (P) \, \vert
      \,
      L(t),N^{\ell}(t),\F_{t-}] - \EE [D^{G^*}_{\mathrm{IPW}} (P) \, \vert
      \, N^{\ell}(t),\F_{t-}] \Big) N^{\ell}(dt) \\
    &  \quad +     \sum_{x\in\lbrace \ell,a,d\rbrace}
      \int_0^{\tmax} \Big(\EE [D^{G^*}_{\mathrm{IPW}} (P) \, \vert
      \, \Delta N^x(t)=1,
      \F_{t-}] \, \,- \\[-0.8em]
    &\qquad\qquad\qquad\qquad\qquad \EE [D^{G^*}_{\mathrm{IPW}} (P) \, \vert \, \Delta N^x(t)=0,
      \F_{t-}] \Big) \big( N^{x}(dt) - \Lambda^x(dt)\big).
  \label{eq:orth:sum}
  \end{align*}
  Recall that:
  \begin{align*}
    D^{G^*}_{\mathrm{IPW}}(P) (O) =
    \Prodi_{s \le \tau }  \frac{dG^*_{s}(O)}{dG_s(O)}\,
    Y - \Psi^{G^*}(P).
  \end{align*}
  Consider first:
  \begin{align*}
    &\EE [D^{G^*}_{\mathrm{IPW}} (P) \, \vert
      \,
      L(t),N^{\ell}(t),\F_{t-}] - \EE [D^{G^*}_{\mathrm{IPW}} (P) \, \vert
      \, N^{\ell}(t),\F_{t-}]  \\
    &\quad = \EE \bigg[ \bigg( \Prodi_{s <t } \frac{dG^*_s}{dG_s} \bigg) \bigg(
      \Prodi_{s \ge t } \frac{dG^*_s}{dG_s} \bigg)
      \,  Y  \, \Big\vert
      \, L(t),N^{\ell}(t),\F_{t\minus}\bigg] \\
    &\qquad\qquad\qquad - \,\,
      \EE \bigg[ \bigg( \Prodi_{s <t } \frac{dG^*_s}{dG_s} \bigg) \bigg(
      \Prodi_{s \ge t } \frac{dG^*_s}{dG_s} \bigg)
      \,  Y  \, \Big\vert
      \, N^{\ell}(t),\F_{t\minus}\bigg] \\
    &\quad =    \Prodi_{s <t } \frac{dG^*_s}{dG_s}
      \bigg(       \EE \bigg[ \bigg(
      \Prodi_{s \ge t } \frac{dG^*_s}{dG_s} \bigg)
      \,  Y  \, \Big\vert
      \, L(t),N^{\ell}(t),\F_{t\minus}\bigg] \\
    &\qquad\qquad\qquad\qquad\qquad\,\, - \,\,
      \EE \bigg[  \bigg(
      \Prodi_{s \ge t } \frac{dG^*_s}{dG_s} \bigg)
      \,  Y  \, \Big\vert
      \, N^{\ell}(t),\F_{t\minus}\bigg] \bigg) \\
    &\quad =    \Prodi_{s <t } \frac{dG^*_s}{dG_s}
      \Big(       \EE_{P^{G^*}} \big[
      \,  Y  \, \vert
      \, L(t),N^{\ell}(t),\F_{t\minus}\big] - \EE_{P^{G^*}} \big[
      \,  Y  \, \vert
      \, N^{\ell}(t),\F_{t\minus}\big]\Big), 
  \end{align*}
  and, by the same line of argument:
  \begin{align*}
    & \EE [D^{G^*}_{\mathrm{IPW}} (P) \, \vert
      \, \Delta N^x(t)=1,
      \F_{t-}] -  \EE [D^{G^*}_{\mathrm{IPW}} (P) \, \vert \, \Delta N^x(t)=0,
      \F_{t-}] \\
    &\quad = \Prodi_{s <t } \frac{dG^*_s}{dG_s}
      \Big(       \EE_{P^{G^*}} \big[
      \,  Y  \, \vert
      \, \Delta N^x(t)=1,\F_{t\minus}\big] - \EE_{P^{G^*}} \big[
      \,  Y  \, \vert
      \,\Delta N^x(t)=0,\F_{t\minus}\big]\Big). 
  \end{align*}
  Finally, note that:
  \begin{align*}
    \EE_{P^{G^*}} \big[
    \,  Y  \, \vert
    \, \Delta N^d(t)=1,\F_{t\minus}\big] = 1.
  \end{align*}
  This gives the expression for the canonical gradient displayed in
  Theorem \ref{thm:eff:ic}.  \end{proof}

\subsection{Corollary \ref{cor:eic:int:dZ}}
\label{sec:cor:eff}

The following corollary provides an alternative representation of the
canonical gradient, that is utilized to analyze the estimation problem
in Section \ref{sec:remainder}.  In the following, we use
\(\bar{\mathcal{O}}_{t}\) to denote the space where \(\bar{O}(t)\)
takes its values and \(\ubar{\mathcal{O}}_{t}\) the space where
\(\ubar{O}(t) = \lbrace O(s) \, : \, t \le s \le \tmax\rbrace\) takes
its values.

\begin{cor}[Canonical gradient]
  We can rewrite the canonical gradient from Theorem \ref{thm:eff:ic}
  for \(\Psi^{G^*} \, : \, \mathcal{M}\rightarrow \R\) as follows: 
\begin{align*}
  &  D^*(P) = \EE_{P^{G^*}} [ Y \, \vert \,  \F_0] - \Psi^{G^*} (P)  \\
  &\qquad +
    \int_0^{\tmax} \Prodi_{s <t }  \frac{dG^*_s}{dG_s} 
    \int_{\ubar{\mathcal{O}}_t} Y \bigg(  \Prodi_{s > t} dQ_s \Prodi_{s \ge t} dG^*_s  -
    \Prodi_{s \ge t} dQ_s dG^*_s\bigg). 
\end{align*}
\label{cor:eic:int:dZ}
\end{cor}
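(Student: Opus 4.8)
The two representations already share the leading pair of terms $\EE_{P^{G^*}}[Y\,|\,\F_0] - \Psi^{G^*}(P)$, so the plan is to show that the sum of the four stochastic integrals in Theorem \ref{thm:eff:ic} equals the single integral in the corollary. First I would record two identifications: $\int_{\ubar{\mathcal O}_t}Y\,\Prodi_{s>t}dQ_s\Prodi_{s\ge t}dG^*_s = Z_t^{G^*}$ by definition \eqref{eq:def:Zt} (the $\ubar{\mathcal O}_t$-integral just makes explicit that the $Q$-part of $O(t)$ is held at its realized value), and $\int_{\ubar{\mathcal O}_t}Y\,\Prodi_{s\ge t}dQ_sdG^*_s = \EE_{P^{G^*}}[Y\,|\,\F_{t-}]$ (the identity established in the lead-up to the proof of Theorem \ref{thm:eff:ic}). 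Thus the inner bracket of the corollary is the difference between the forward $P^{G^*}$-value of $Y$ read off from the realized $Q$-part of $O(t)$ and the forward value conditioning only on $\F_{t-}$; pulling out the common clever weight $\Prodi_{s<t}dG^*_s/dG_s = h^{G^*}_t$, it remains to match this against the four-term expression.

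The heart of the argument is a telescoping over the multiplicative factorization of the time-$t$ slice $dQ_t$ into its $N^\ell$-and-$\mu_t$ factor, its $N^a$-intensity factor, and its $N^d$-intensity factor, which are conditionally independent given $\F_{t-}$. I would integrate these out one at a time, so that the single increment above splits into a sum of four contributions, each carrying an explicit $N^x(dt)$ or $\Lambda^x(dt)$. Integrating out the $N^d$-indicator yields $\big(\EE_{P^{G^*}}[Y\,|\,\Delta N^d(t)=1,\F_{t-}] - \EE_{P^{G^*}}[Y\,|\,\Delta N^d(t)=0,\F_{t-}]\big)\,(N^d(dt)-\Lambda^d(dt))$, which is $h^d_t\,M^d(dt)$ once one uses $\EE_{P^{G^*}}[Y\,|\,\Delta N^d(t)=1,\F_{t-}]=1$; integrating out the $N^a$-indicator gives $h^a_t\,M^a(dt)$; integrating out the value $L(t)$ on the event $\Delta N^\ell(t)=1$ gives $(Z^{G^*}_t - Z^{G^*}_{t,L(t)})\,N^\ell(dt)$ by \eqref{eq:def:Zt:Lt}; and integrating out the $N^\ell$-indicator gives $h^\ell_t\,M^\ell(dt)$. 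Summing reproduces precisely the four integrands of Theorem \ref{thm:eff:ic}. Equivalently — and perhaps the cleanest route — since $\mathscr{T}_\mu,\mathscr{T}_{\Lambda^a},\mathscr{T}_{\Lambda^\ell},\mathscr{T}_{\Lambda^d}$ are mutually orthogonal and their direct sum is the full tangent space of the time-positive $Q$-components, the corollary is just the projection $\Pi\big(D^{G^*}_{\mathrm{IPW}}(P)\,|\,\mathscr{T}_\mu\oplus\mathscr{T}_{\Lambda^a}\oplus\mathscr{T}_{\Lambda^\ell}\oplus\mathscr{T}_{\Lambda^d}\big)$ written without decomposing that subspace; it can then be obtained by rerunning the computation of Lemmas \ref{lemma:project:Lt}--\ref{lemma:project:lambda} against a generic compensated-increment element of the combined tangent space.

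The hard part will be the continuous-time product-integral bookkeeping inside the telescoping: justifying that the factors of $dQ_t$ can be peeled off one at a time inside $\Prodi_{s\ge t}$, that the cross-terms from two simultaneous jumps at $t$ and the higher-order infinitesimals $d\Lambda^x(t)\,d\Lambda^y(t)$ are negligible (the same regularity that makes the likelihood \eqref{eq:like} well-defined), and that the conditioning $\sigma$-algebras in the intermediate quantities collapse to the clean ones in Theorem \ref{thm:eff:ic} — here one uses that at a jump time of $N^x$ the other counting processes do not jump and $L(t)=L(t-)$, so the extra conditioning is vacuous. Since this is exactly the martingale-and-product-integral machinery already deployed in the forward direction in Section \ref{sec:deriving:eff:ic}, the writeup can largely cite those computations rather than repeat them.
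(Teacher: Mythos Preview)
Your proposal is correct and follows essentially the same route as the paper: identify the inner bracket as the difference $\EE_{P^{G^*}}[Y\mid L(t),N^\ell(dt),N^a(dt),N^d(dt),\F_{t-}]-\EE_{P^{G^*}}[Y\mid\F_{t-}]$, then telescope that difference by integrating out the constituents of $dQ_t$ one at a time to recover the $N^\ell(dt)$ term and the three martingale increments. The paper organizes the telescoping slightly differently (it first peels off the $L(t)$-value to get the $N^\ell(dt)$ term, then expands $\EE_{P^{G^*}}[Y\mid\F_{t-}]$ by total probability over the jump configurations under a no-simultaneous-jumps convention to extract all three $M^x(dt)$ terms at once), but this is a presentational difference only; your anticipated ``hard part'' about cross-terms and simultaneous jumps is exactly what the paper sweeps under its simplifying assumption.
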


\begin{proof}
  Defining
  \begin{align*}
  Z^{G^*}(dt) := \EE_{P^{G^*}} [Y \, \vert \, L(t),
  N^{\ell}(dt),N^{a}(dt), N^{d}(dt), \F_{t-}] - \EE_{P^{G^*}} [Y \,
    \vert \, \F_{t-} ],
  \end{align*}
  the last line of the representation for \(D^{G^*}(P)\) displayed in
  the corollary can be written
\begin{align*}
\int_0^{\tmax} \Prodi_{s <t }  \frac{dG^*_s}{dG_s} 
  \int_{\ubar{\mathcal{O}}_t} Y \bigg(  \Prodi_{s > t} dQ_s \Prodi_{s \ge t} dG^*_s  -
  \Prodi_{s \ge t} dQ_s dG^*_s\bigg)
  = \int_0^{\tmax} \bigg( \Prodi_{s <t }  \frac{dG^*_s}{dG_s} \bigg)
  Z^{G^*}(dt).
\end{align*}  
To prove the corollary, it is sufficient to show that
\begin{align*}
  Z^{G^*}(dt)
  & =   \Big(  \EE[ Y \mid L(t) , \Delta N^\ell(t)=1, \F_{t-}]  \\[-0.3em]
  &\qquad\qquad\qquad\qquad\qquad\quad - \,
    \EE[ Y \mid L(t) , \Delta N^\ell(t)=1, \F_{t-}] \Big) N^\ell(dt) \\
  & \qquad +  \sum_{x\in\lbrace\ell,a,d\rbrace}
    \Big( \EE_{P^{G^*}} [Y \mid \Delta N^x(t) =1 , \F_{t-}]  \\[-0.85em]
  &\qquad\qquad\qquad\quad  -
    \, \EE_{P^{G^*}} [Y \mid \Delta N^x(t) =0 , \F_{t-}] \Big) \big(N^x (dt)-\Lambda^x(dt)\big). 
\end{align*}
We expand \(Z^{G^*}(dt)\) as follows
\begin{align*}
  & \EE_{P^{G^*}} [Y \, \vert \, L(t),
    N^{\ell}(dt),N^{a}(dt), N^{d}(dt), \F_{t-}] \\
  &\underbrace{\qquad\qquad\qquad\qquad\qquad - \,
    \EE_{P^{G^*}} [Y \, \vert \, 
    N^{\ell}(dt),N^{a}(dt), N^{d}(dt), \F_{t-}]}_{=:Z_1^{G^*} (dt)} \\
  &\qquad\, + \,\,  \underbrace{\EE_{P^{G^*}} [Y \, \vert \,
    N^{\ell}(dt),N^{a}(dt), N^{d}(dt), \F_{t-}]  - \EE_{P^{G^*}} [Y \,
    \vert \, \F_{t-} ] }_{=:Z_2^{G^*} (dt) }. 
\end{align*}
Since \(L(t)\) only changes at jumps of \(N^\ell(t)\), it follows that
\begin{align*}
  Z^{G^*}_1 (dt) =  \big(  \EE[ Y \mid L(t) , \Delta N^\ell(t)=1, \F_{t-}] -
  \EE[ Y \mid \Delta N^\ell(t)=1, \F_{t-}] \big) N^\ell(dt).
\end{align*}
Next, considering \(Z^{G^*}_2 (dt)\), we note that
\begin{align*}
  Z^{G^*}_2 (dt)
  &= \EE_{P^{G^*}} [Y \, \vert \, 
    N^{\ell}(dt),  N^a(dt),  N^d(dt), \F_{t-}] \\
  &  \qquad - \, \Lambda^\ell(dt) \, \EE_{P^{G^*}} [Y \, \vert \, \Delta N^\ell(t)=1
    , \F_{t-} ]  \\
  &\qquad -\,
    \Lambda^a(dt)\, \EE_{P^{G^*}} [Y \, \vert \, 
    \Delta N^a(t)=1, \F_{t-} ] \\
  & \qquad -\,
    \Lambda^d(dt) \,\EE_{P^{G^*}} [Y \, \vert \,\Delta N^d(t)=1, \F_{t-} ] \\
  &\qquad - \, \big(1-  \Lambda^\ell(dt) -  \Lambda^a(dt)- \Lambda^d(dt) \big) \\[-0.1em]
  &\quad\qquad\qquad
    \EE_{P^{G^*}} [Y \, \vert \,   \Delta N^\ell(t)=0, \Delta N^a(t)=0, \Delta N^d(t)=0, \F_{t-} ],
    \intertext{assuming for ease of presentation that the processes \(N^\ell\),
    \(N^a\) and \(N^d\) do not jump at the same time.  Collecting terms now  yields that}
    Z^{G^*}_2 (dt) &= \sum_{x\in\lbrace\ell,a,d\rbrace} \Big( \EE_{P^{G^*}} [Y \mid \Delta N^x(t) =1 , \F_{t-}]  \\[-0.75em]
  &\qquad\quad\qquad  -
    \, \EE_{P^{G^*}} [Y \mid \Delta N^x(t) =0 , \F_{t-}] \Big) \big(N^x (dt)-\Lambda^x(dt)\big) ,
\end{align*}
which finishes the proof.
 \end{proof}

\subsection{Representation of the second-order remainder
  \(R_2(P, P_0)\)}
\label{sec:remainder}

As in Section \ref{sec:cor:eff}, we use \(\bar{\mathcal{O}}_{t}\) to
denote the space where \(\bar{O}(t)\) takes its values and
\(\ubar{\mathcal{O}}_{t}\) the space where
\(\ubar{O}(t) = \lbrace O(s) \, : \, t \le s \le \tmax\rbrace\) takes
its values.  We now express \(P_0 D^*(P) \) for any
\(P\in \mathcal{M}\) using the representation of the canonical
gradient in Corollary \ref{cor:eic:int:dZ} as follows:
\begin{align}
  \int_{\mathcal{O}}
  & D^*(P) \, dP_0 \notag\\
  & \, = \int_{\bar{\mathcal{O}}_{t}} \int_0^{\tmax} \Prodi_{s <t }  \frac{dG^*_s}{dG_s}
    \bigg( \bigg(   \int_{\ubar{\mathcal{O}}_t} Y\Prodi_{s > t} dQ_s \Prodi_{s \ge t} dG^*_s
    \bigg) \Prodi_{s \le t} dQ_{0,s} \Prodi_{s < t} dG_{0,s}  \notag\\
  &\qquad\qquad\qquad\qquad\qquad\qquad - \,
    \bigg(   \int_{\ubar{\mathcal{O}}_t} Y    \Prodi_{s \ge t} dQ_s dG^*_s
    \bigg) \Prodi_{s <t} dQ_{0,s} dG_{0,s}\bigg)  
    \notag\\
  & \, =  \int_0^{\tmax}\int_{{\mathcal{O}}}
    \Prodi_{s <t }  \frac{dG^*_s}{dG_s}
    \bigg(
    Y \,\Prodi_{s \ge t} dG^*_s\, \Prodi_{s > t}dQ_s \Prodi_{s < t} dG_{0,s}\, \Prodi_{s \le t}dQ_{0,s}
    \, - \notag\\
  &\qquad\qquad\qquad\qquad\qquad\qquad\qquad\qquad
    Y \, \Prodi_{s \ge t} dG^*_s \,
    d Q_s \Prodi_{s < t} dG_{0,s} \,dQ_{0,s} \bigg) 
    \notag\\
  &  \, = \int_0^{\tmax}\int_{\mathcal{O}} Y\,
    \Prodi_{s <t }  \frac{dG^*_s}{dG_s}
    \Prodi_{s < t}  dG_{0,s} 
    d Q_{0,s}\,  \big( dQ_{0,t} - dQ_t \big)\Prodi_{s \ge t} dG^*_s\Prodi_{s > t} dQ_s
    \notag \\
  & \, =  \int_0^{\tmax}\int_{\mathcal{O}}  
    Y \, \Prodi_{s <t }  \frac{dG_{0,s}}{dG_s}
    \Prodi_{s < t} dG_{s}^*   dQ_{0,s} 
    \,\big( d Q_{0,t} - dQ_t \big) \Prodi_{s \ge t}  dG^*_s \Prodi_{s> t}  dQ_s 
    \notag \\
  & \, =  \int_0^{\tmax}\int_{\mathcal{O}}
    Y \, \bigg(  \Prodi_{s <t }  \frac{dG_{0,s}}{dG_s}
    -
    1\bigg)\,
    \Prodi_{s \le \tmax} dG_{s}^*  \Prodi_{s < t} dQ_{0,s} 
    \,\big( d Q_{0,t} - dQ_t \big) \Prodi_{s >t} dQ_s  \tag{\(*1\)}\\
  & \, \white =  +
    \int_0^{\tmax} \int_{\mathcal{O}}
    Y \,
    \Prodi_{s \le \tmax} dG_{s}^*  \Prodi_{s < t} dQ_{0,s} 
    \,\big( d Q_{0,t} - dQ_t \big) \Prodi_{s >t} dQ_s  \tag{\(*2\)}\\
  & \quad = \underbrace{R_2( P, P_0)}_{(*1)} +
    \underbrace{\Psi^{G^*}(P_0) - \Psi^{G^*}(P)}_{(*2)}.\notag
\end{align}
Next we apply the Duhamel equation \citep{andersen2012statistical}
which in our setting yields that:
\begin{align*}
  \Prodi_{s \le \tau} dQ_{0,s} - \Prodi_{s \le \tau} dQ_{s} =
  \int_{t \le \tau} \Prodi_{ s < t} dQ_{0,s} ( dQ_{0,t} - dQ_{t} ) \Prodi_{ s > t} dQ_s. 
\end{align*}
This allows us to establish the relation
\((*2) =\Psi^{G^*}(P_0) - \Psi^{G^*}(P) \), applying the Duhamel
equation at the fourth equality:
\begin{align*}
  & \Psi^{G^*}(P_0) - \Psi^{G^*}(P) = \EE_{P^{G^*}_0}\big[Y \big] - \EE_{P^{G^*}}\big[Y \big]\\
  &\qquad =\int_{\mathcal{O}} Y \,  \Prodi_{s\in [0,\tmax]} dG^*_{s}\, dQ_{0,s} -
    \int_{\mathcal{O}} Y \,  \Prodi_{s\in [0,\tmax]} dG^*_s\, dQ_s\\
  &\qquad =\int_{\mathcal{O}} Y \, \Prodi_{s\in [0,\tmax]} dG^*_s \,
    \bigg( \Prodi_{s\in [0,\tmax]}  dQ_{0,s}  -  \Prodi_{s\in [0,\tmax]} dQ_{s} \bigg)\\
  &\qquad =\int_{\mathcal{O}} Y \, \Prodi_{s\in [0,\tmax]} dG^*_s \, \bigg( \int_0^{\tmax}
    \Prodi_{s\in[0,t)}  dQ_{0,s} \, \big(dQ_{0,t}  - dQ_{t}  \big) \Prodi_{s\in(t,\tmax]}  dQ_{s}
    \bigg)\\
  &\qquad =\int_0^{\tmax}\int_{\mathcal{O}} Y \, \Prodi_{s\in [0,\tmax]} dG^*_s \,
    \Prodi_{s\in[0,t)}  dQ_{0,s} \, \big(dQ_{0,t}  - dQ_{t}  \big) \Prodi_{s\in(t,\tmax]}  dQ_{s}
    .
\end{align*}
This again implies that:
\begin{align}
  \Psi^{G^*}(P) - \Psi^{G^*}(P_0) = - P_0   D^*(P) + R_2(P,P_0),
  \label{eq:expansion1}
\end{align}
with the second-order remainder \(R_2(P,P_0)\) given by \((*1)\)
above:
\begin{align*}
  &  R_2(P,P_0)\\
  & \,\,=   \int_0^{\tmax}\int_{\mathcal{O}}
    Y \,  \bigg(  \Prodi_{s <t }  \frac{dG_{0,s}}{dG_s}
    -
    1\bigg)\,
    \Prodi_{s \le \tmax} dG_{s}^*  \Prodi_{s < t} dQ_{0,s} 
    \,\big( d Q_{0,t} - d{Q}_{t} \big) \Prodi_{s >t} d{Q}_{s} \\
  & \,\,= \int_0^{\tmax} \int_{\mathcal{O}}
    Y \,  \frac{1}{\bar{g}_{t}}
    \big(  \bar{g}_{0,t} - \bar{g}_{t}
    \big) 
    \Prodi_{s \le \tmax} dG_{s}^*  \Prodi_{s < t} dQ_{0,s} 
    \,\big( d Q_{0,t} - d{Q}_{t}  \big) \Prodi_{s >t} d{Q}_{s}.
\end{align*}

\subsection{Proof of Lemma \ref{lemma:representation:Psi:Z}}
\label{app:proof:of:lemma:Psi:Z}

For the proof of Lemma \ref{lemma:representation:Psi:Z} recall that:
\begin{align*}
  Z_{t}^{G^*}&= 
               \int
               Y \,\Prodi_{s \ge t} dG^*_s\, \Prodi_{s > t}dQ_s  , \\
  Z_{t,L(t)}^{G^*} 
             &= \, \int \bigg( \int
               Y \,\Prodi_{s \ge t} dG^*_s\, \Prodi_{s > t}dQ_s \bigg) d\mu_{0,t} (L(t) \, \vert \,\F_{t-}).
\end{align*}
where \(dG_t(O)\) and \(dQ_t(O)\) denote the conditional measure of
the interventional and non-interventional parts of the likelihood at
time \(t\) given the observed history \(\F_{t-}\).  Particularly,
these are defined as:
\begin{align}
  \begin{split}
    dG_{t}(O) = \big( \pi_{t} (A(t)\, \vert \, \F_{t-})
    \big)^{N^{a}(dt)} \big(d\Lambda^{c} (t \, \vert \,
    \F_{t-}) \big)^{N^c(dt)} \\
    \big( 1 - d\Lambda^{c} (t \, \vert \, \F_{t-}) \big)^{1-N^c(dt)},
  \end{split}\label{eq:G:again}
\end{align}
and
\begin{align}
  \begin{split}
    dQ_{t}(O) &= \big( d\Lambda_0^{a} (t \, \vert \, \F_{t-})
    \big)^{N^{a}(dt)} \big( 1-d\Lambda^{a} (t \, \vert \,
    \F_{t-}) \big)^{1-N^{a}(dt)}\\
    & \qquad \big( d\Lambda^{\ell} (t \, \vert \, \F_{t-}) \,
    \mu_{0,t} (L(t)\, \vert \, \F_{t-}) \big)^{N^{\ell}(dt)} \big(
    1-d\Lambda^{\ell} (t \, \vert \,
    \F_{t-}) \big)^{1-N^{\ell}(dt)}\\
    &\qquad \big(d\Lambda^{d} (t \, \vert \, \F_{t-})\big)^{N^d(dt)}
    \big( 1 - d\Lambda^{d} (t \, \vert \, \F_{t-}) \big)^{1-N^d(dt)}.
  \end{split} \label{eq:Q:again}\end{align} Recall also that our
statistical model \(\mathcal{M}\) is defined by:
\begin{align*}
  \mathcal{M} = \bigg\lbrace P \, : \, dP = dP_{Q,G}= \Prodi_{t \in [0, \tmax]}  dQ_t dG_t ,\,  \, 
  G \in \mathcal{G}, \, Q \in \mathcal{Q}\bigg\rbrace ,
\end{align*}
so that any \(P \in\mathcal{M}\) admits a factorization into a
\(G\)-part and a \(Q\)-part.

The statement of Lemma \ref{lemma:representation:Psi:Z} is that the
target parameter \(\Psi^{G^*} \,: \, \mathcal{M} \rightarrow \R\),
defined by
\begin{align}
  \Psi^{G^*} (P)    = \int Y \, \Prodi_{t \le \tau} dQ_{ t} \, dG_t^*,
  \label{eq:target:parameter:again}
\end{align}
can be represented as a functional of \(P\in \mathcal{M}\) only
through \(\bm{Z}\)
\begin{align*}
  {\bm{Z}} =  {\bm{Z}}(P) :=   \big( {Z}^{G^*}_{t}, {Z}^{G^*}_{t,L(t)},
  {\Lambda}^{\ell}(t), {\Lambda}^{a}(t), {\Lambda}^{d}(t)
  \big)_{t\in [0,\tmax]} . 
\end{align*}
The basic idea of the proof is to use Fubini's theorem repeatedly to
rearrange the order of integration and thereby rewrite the target
parameter \eqref{eq:target:parameter:again} in terms of iterated
integrals.

\begin{proof} (Lemma 1).  Fix \(P\in \mathcal{M}\) such that
  \begin{align*}
    dP= \Prodi_{t \le \tau} dQ_t dG_t, 
  \end{align*}
  with \(dQ_t\) and \(dG_t\) as defined by Equations
  \eqref{eq:G:again}--\eqref{eq:Q:again} above.  For the sequence of
  time-points,
  \(0={t}_0=t_0 <{t}_1 < \cdots <{t}_R \le t_{R+1}= \tau\), we can
  evaluate the distribution of \(Y\) to write the target parameter as:
\begin{align*}
  \Psi^{G^*} (P)
  &  = 
    \,\,\underset{0=t_0 <{t}_1 < \cdots <{t}_R \le \tau}{\int} Y \,
    \prod_{r=0}^{R} \Prodi_{t \in (t_{r-1},t_r]}
    dQ_{ t} \, dG_t^* \Prodi_{t \in (t_{R},\tau]}
    dQ_{ t} \, dG_t^* .
\end{align*}
This can also be written as:
\begin{align*}
  &\Psi^{G^*} (P)
    = 
    \!\!\!\! \underset{0=t_0 <{t}_1 < \cdots <{t}_R \le \tau}{\int} \underbrace{\bigg( \int Y  \Prodi_{t \in (t_{R},\tau]}
    dG_t^* \bigg)}_{
    \mathclap{
    E_{P^{G^*}}[ Y \mid L(\tau), N^{\ell}(\tau) , N^a(\tau), N^d(\tau),\F_{t_R}]
    = Y
    }}\,  \Prodi_{t \in (t_{R},\tau]}
    dQ_{ t} \, \prod_{r=0}^{R}  \Prodi_{t \in (t_{r-1},t_r]}
    dQ_{ t} \, dG_t^*\notag \\
  & \,\, = 
    \!\!\!\!\!\! \!\! \underset{0=t_0 <{t}_1 < \cdots <{t}_R }{\int} 
    \bigg( \underbrace{ \int Y
    \, \Prodi_{t \in (t_{R},\tau]}
    \big(d\Lambda^{d} (t )\big)^{N^d(dt)}
    \big( 1 - d\Lambda^{d} (t ) \big)^{1-N^d(dt)} }_{
    \EE_{P}[ Y \mid \F_{t_R}]
    }\bigg)
      \prod_{r=0}^{R} \Prodi_{t \in (t_{r-1},t_r]} \!\!\!\!
    dQ_{ t} \, dG_t^*\\
  & \,\, = 
    \!\!\!\!\!\!\!\! \underset{0=t_0 <{t}_1 < \cdots <{t}_R}{\int} 
    \bigg( \underbrace{\int  \EE_{P}[ Y \mid \F_{t_R}] \Prodi_{t \in (t_{R-1},t_R]} dG^*_t}_{
    \mathclap{
    E_{P^{G^*}}[ Y \mid L(t_R), N^{\ell}(t_R) , N^a(t_R), N^d(t_R),\F_{t_{R-1}}]
    = Z_{t_R}^{G^*}
    }
    }\bigg)
    \Prodi_{t \in (t_{R-1},t_R]} \!\!\! dQ_t  \, \prod_{r=0}^{R-1}  \Prodi_{t \in (t_{r-1},t_r]} \!\!\!\!
    dQ_{ t} \, dG_t^* \\
  & \,\, = \!\!\!\!
    \!\!\!\! \underset{0=t_0 <{t}_1 < \cdots <{t}_R}{\int} 
    Z_{t_R}^{G^*}
    \Prodi_{t \in (t_{R-1},t_R]} dQ_t  \, \prod_{r=0}^{R-1} \Prodi_{t \in (t_{r-1},t_r]} \!\!\!\!
    dQ_{ t} \, dG_t^* . 
\end{align*}
Next, recall that
\begin{align*}
  &\Prodi_{t \in (t_{R-1},t_R]}
    dQ_{ t} =\Prodi_{t \in (t_{R-1},t_R]} \big( d\Lambda^{a} (t )
    \big)^{N^{a}(dt)} \big( 1-d\Lambda^{a} (t ) \big)^{1-N^{a}(dt)}\\[-0.3em]
  & \qquad\qquad\qquad\qquad\qquad \big( d\Lambda^{\ell} (t ) \,
    \mu_{t} (L(t)) d\nu_L(L(t))\big)^{N^{\ell}(dt)} \big(
    1-d\Lambda^{\ell} (t ) \big)^{1-N^{\ell}(dt)}\\
  & \qquad\qquad\qquad\qquad\qquad \big(d\Lambda^{d} (t )\big)^{N^d(dt)}
    \big( 1 - d\Lambda^{d} (t ) \big)^{1-N^d(dt)},
\end{align*}
where we have suppressed the conditioning on \(\F_{t-}\) to simplify
the presentation. We use this to rewrite the following:
\begin{align*}
  &\int
  Z_{t_R}^{G^*}
  \, \Prodi_{t \in (t_{R-1},t_R]}
  dQ_{ t}
  =  \int
    Z_{t_R}^{G^*} \Prodi_{t \in (t_{R-1},t_R]} \big( 
    \mu_{t} (L(t))d\nu_L(L(t)) \big)^{N^{\ell}(dt)} \\
  &\qquad\qquad\qquad\qquad\qquad\qquad\qquad \big( d\Lambda^{\ell} (t ) \big)^{N^{\ell}(dt)} \big(
    1-d\Lambda^{\ell} (t ) \big)^{1-N^{\ell}(dt)}\\
  &\qquad\qquad\qquad\qquad\qquad\qquad\qquad    \big( d\Lambda^{a} (t )
    \big)^{N^{a}(dt)} \big( 1-d\Lambda^{a} (t ) \big)^{1-N^{a}(dt)}\\
  & \qquad\qquad\qquad\qquad\qquad\qquad\qquad \big(d\Lambda^{d} (t )\big)^{N^d(dt)}
    \big( 1 - d\Lambda^{d} (t ) \big)^{1-N^d(dt)} \\
  &\qquad\qquad=  \int \underbrace{\bigg( \int
    Z_{t_R}^{G^*} \Prodi_{t \in (t_{R-1},t_R]} \big( 
    \mu_{t} (L(t)) d\nu_L(L(t)) \big)^{N^{\ell}(dt)} \bigg)}_{=
    Z_{t_R, L(t_R)}} \\
  &\qquad\qquad\quad\qquad\qquad\quad \!\! \Prodi_{t \in (t_{R-1},t_R]}
    \big( d\Lambda^{\ell} (t ) \big)^{N^{\ell}(dt)} \big(
    1-d\Lambda^{\ell} (t ) \big)^{1-N^{\ell}(dt)}
  \\[-0.99em]
  &\qquad\qquad\qquad \quad\qquad\qquad\qquad\quad \big( d\Lambda^{a} (t )
    \big)^{N^{a}(dt)} \big( 1-d\Lambda^{a} (t ) \big)^{1-N^{a}(dt)}\\
  &\qquad\qquad\qquad \quad\qquad\qquad\qquad\quad \big(d\Lambda^{d} (t )\big)^{N^d(dt)}
    \big( 1 - d\Lambda^{d} (t ) \big)^{1-N^d(dt)} \\
  &\qquad\qquad=  \int Z^{G^*}_{t_R, L(t_R)}  \Prodi_{t \in (t_{R-1},t_R]}  \big( d\Lambda^{\ell} (t ) \big)^{N^{\ell}(dt)} \big(
    1-d\Lambda^{\ell} (t ) \big)^{1-N^{\ell}(dt)} \\[-0.39em]
  &\qquad\qquad \quad\qquad\qquad\qquad\qquad \big( d\Lambda^{a} (t )
    \big)^{N^{a}(dt)} \big( 1-d\Lambda^{a} (t ) \big)^{1-N^{a}(dt)}\\
  &\qquad\qquad \quad\qquad\qquad\qquad\qquad \big(d\Lambda^{d} (t )\big)^{N^d(dt)}
    \big( 1 - d\Lambda^{d} (t ) \big)^{1-N^d(dt)}.
\end{align*}
Combined with
\begin{align*}
  & \int \bigg( \int
    Z_{t_R}^{G^*}
    \, \Prodi_{t \in (t_{R-1},t_R]}
    dQ_{ t} \bigg) 
    \Prodi_{t \in (t_{R-2},t_{R-1}]} 
    dG^*_{ t}  \\
  & \qquad = E_{P^{G^*}}[ Y \mid L(t_{R-1}), N^{\ell}(t_{R-1}) , N^a(t_{R-1}), N^d(t_{R-1}),
    \F_{t_{R-2}}]
    = Z_{t_{R-1}}^{G^*}
\end{align*}
we have that:
\begin{align*}
  &  Z_{t_{R-1}}^{G^*} = \int \bigg( \int Z^{G^*}_{t_R, L(t_R)}  \Prodi_{t \in (t_{R-1},t_R]}
    \big( d\Lambda^{\ell} (t ) \big)^{N^{\ell}(dt)} \big(
    1-d\Lambda^{\ell} (t ) \big)^{1-N^{\ell}(dt)}
    \\[-0.99em]
  & \qquad\qquad\qquad\qquad\qquad\quad\qquad\qquad
    \big( d\Lambda^{a} (t )
    \big)^{N^{a}(dt)} \big( 1-d\Lambda^{a} (t ) \big)^{1-N^{a}(dt)}\\
  & \qquad\qquad\qquad\qquad \big(d\Lambda^{d} (t )\big)^{N^d(dt)}
    \big( 1 - d\Lambda^{d} (t ) \big)^{1-N^d(dt)} \bigg)  \Prodi_{t \in (t_{R-2},t_{R-1}]} 
    dG^*_{ t}  .
\end{align*}
Hence, \( Z_{t_R, L(t_R)}, \Lambda^{\ell}, \Lambda^a, \Lambda^{d}\)
together with the specified intervention \(G^*\) completely
characterizes \( Z_{t_{R-1}}^{G^*}\). Further, the target parameter is
now reduced to:
   \begin{align*}
     & \Psi^{G^*} (P) =
       \,\,\underset{0  <{t}_1 < \cdots <{t}_{R-1}}{\int} 
       Z_{t_{R-1}}^{G^*}
       \, \Prodi_{t \in (t_{R-2},t_{R-1}]}
       dQ_{ t} \, \prod_{r=0}^{R-2} \Prodi_{t \in (t_{r-1},t_r]}
       dQ_{ t} \, dG_t^* 
\end{align*}
By backwards induction, applying the same arguments as above, we see
that
\begin{align*}
  &  Z_{t_r}^{G^*} = \int \bigg( \int Z^{G^*}_{t_{r+1}, L(t_{r+1})}  \Prodi_{t \in (t_{r},t_{r+1}]}
    \big( d\Lambda^{\ell} (t ) \big)^{N^{\ell}(dt)} \big(
    1-d\Lambda^{\ell} (t ) \big)^{1-N^{\ell}(dt)}    \\[-0.9em]
  & \qquad\quad\qquad\qquad\qquad\qquad\qquad\quad\qquad
    \big( d\Lambda^{a} (t )
    \big)^{N^{a}(dt)} \big( 1-d\Lambda^{a} (t ) \big)^{1-N^{a}(dt)}\\
  & \qquad\qquad\qquad\qquad\quad \big(d\Lambda^{d} (t )\big)^{N^d(dt)}
    \big( 1 - d\Lambda^{d} (t ) \big)^{1-N^d(dt)} \bigg) \Prodi_{t \in (t_{r-1},t_r]} 
    dG^*_{ t}  ,
\end{align*}
for any \(r = R, \ldots, 0\), and further that
\begin{align*}
     & \Psi^{G^*} (P) =
       \,\,\underset{0=t_0 <{t}_1 < \cdots <{t}_{r}}{\int} 
       Z_{t_r}^{G^*}
       \, \Prodi_{t \in (t_{r-1},t_r]}
       dQ_{ t} \prod_{l=0}^{r-1} \Prodi_{t \in (t_{l-1},t_l]}
       dQ_{ t} \, dG_t^* .
\end{align*}
We finally note that \(Z_{t_0}^{G^*}= \EE_{P^{G^*}} [Y \mid L_0]\),
and thus:
\begin{align*}
  \Psi^{G^*}(P) = \int  Z_{t_0}^{G^*} \mu_{L_0} (L_0) d\nu_{L_0} (L_0).
\end{align*}
In total, this shows that the target parameter defined in
\eqref{eq:target:parameter:again} can be characterized entirely by the
time-sequence of conditional expectations, \(Z_t^{G^*}\),
\(Z_{t,L(t)}^{G^*}\), and the conditional intensities
\(\Lambda^{\ell}, \Lambda^a, \Lambda^d\). This was the statement of
Lemma \ref{lemma:representation:Psi:Z}.
\end{proof}

\newpage

\section{Highly adaptive lasso (HAL)}
\label{app:smoothness:hal}

This appendix is concerned with highly adaptive lasso (HAL) estimation
of the nuisance parameters of our estimation problem.  The appendix is
structured as follows.  Section \ref{sec:initial:hal} gives the
general integral representation of \cadlag functions with finite
sectional variation norm. Section \ref{app:parametrization:nuisance}
outlines parametrizations of the nuisance parameters and states the
assumptions required to prove the HAL convergence results. Section
\ref{app:sec:hal:estimator} defines the HAL estimator. Section
\ref{sec:initial:hal:theory} presents the theoretical results for
HAL. Section \ref{app:hal:proofs} provides the proofs. We need a fair
amount of extra notation that will be introduced when needed
throughout the sections of this appendix.

\subsection{Càdlàg functions with finite sectional variation norm}
\label{sec:initial:hal}

Let \(\banachk\) denote the Banach space of \(k\)-variate \cadlag
functions on \([0,\eta]\subset \R^k\), \(\eta \in \R^k_+\). We define
the sectional variation norm of \(f\in\banachk\) as the sum of the
variation norms of the sections of \(f\) \citep{gill1995inefficient}:
\begin{align}
  \Vert f \Vert_{\vv}= \vert f(0) \vert + \sum_{\mathcal{S}\subset \lbrace 1, \ldots, k\rbrace}
  \int_{(0(\mathcal{S}), \eta(\mathcal{S})]} \vert f (dx(\mathcal{S}), 0(\mathcal{S}^c)) \vert.
  \label{eq:def:variation}
\end{align}
Here, \(\sum_{\mathcal{S} \subset \lbrace 1, \ldots, k\rbrace}\) is
the sum over all subsets of \(\lbrace 1, \ldots, k\rbrace\),
\({x}(\mathcal{S})= (x_j)_{ j \in \mathcal{S}} \) corresponds to the
\(\mathcal{S}\)-specific coordinates of \(x\),
\(x({\mathcal{S}^c})= (x_j)_{ j \not\in \mathcal{S}} \) are the
coordinates in the complement of the index set \(\mathcal{S}\), and
\({x}\rightarrow f(x(\mathcal{S}), 0(\mathcal{S}^c))\) is the
\(\mathcal{S}\)-specific section of \(f\) that sets the coordinates in
the complement of \(\mathcal{S}\) equal to zero. Furthermore, let
\begin{align*}
  \FF_{\vv,\mathscr{M}} = \big\lbrace f \in \banachk \, : \, \Vert f \Vert_{\vv} \le \mathscr{M}
  \big\rbrace, 
\end{align*}
denote the subset of \cadlag functions with sectional variation norm
bounded by a constant \(\mathscr{M}<\infty\).  Any
\(f\in \FF_{\vv,\mathscr{M}} \) admits an integral representation in
terms of the measures generated by its \(\mathcal{S}\)-specific
sections \citep{gill1995inefficient} which is used to define the HAL
estimator.

\subsubsection{Representation of \cadlag functions with finite variation
  norm}
\label{sec:hal:mle}

Any \(f\in \FF_{\vv,\mathscr{M}}\) admits an integral representation
as follows \citep{gill1995inefficient}:
\begin{align*}
  f(x) = f(0) +
\sum_{\mathcal{S}\subset \lbrace 1, \ldots, k\rbrace}
  \int_{(0(\mathcal{S}), x(\mathcal{S})]}
  f(du(\mathcal{S}), 0(\mathcal{S}^c)).
\end{align*}
Particularly, consider a function \(f\in \FF_{\vv,\mathscr{M}}\)
defined on a finite set of support points
\(\lbrace s_j \rbrace_{j \in \mathcal{I}}\) indexed by
\(\mathcal{I}\).  The above integral representation of
\(f \in \FF_{\vv,\mathscr{M}}\) can be written in terms of a finite
linear combination of indicator basis functions as follows:
\begin{align}
  f(x)=  f(0) +
  \sum_{\mathcal{S}\subset \lbrace 1, \ldots, k\rbrace}
  \sum_{j \in \mathcal{I}}
  \1 \lbrace s_j ( \mathcal{S}) \le x(\mathcal{S}) \rbrace 
  f(ds_j(\mathcal{S}), 0(\mathcal{S}^c)),
  \label{eq:rep:finite:1}
\end{align}
where \( f(ds_j(\mathcal{S}), 0(\mathcal{S}^c))\) is the point-mass
that the \(\mathcal{S}\)th section function assigns to the \(j\)th
support point \(s_j(\mathcal{S})\).

Now, define the indicator basis functions
\(\phi_{\mathcal{S},j} ( x ) = \1 \lbrace s_j(\mathcal{S}) \le
x(\mathcal{S})\rbrace\) and the corresponding coefficients
\(\beta_{\mathcal{S},j} = f(ds_j(\mathcal{S}), 0(\mathcal{S}^c))
\). Then we can write \eqref{eq:rep:finite:1} as:
\begin{align}
  f(x)=  \beta_0 +
  \sum_{\mathcal{S}\subset \lbrace 1, \ldots, k\rbrace}
  \sum_{j \in \mathcal{I}}
  \phi_{\mathcal{S},j} ( x )
  \beta_{\mathcal{S},j} . 
  \label{eq:rep:finite:2}
\end{align}
We further note that the sectional variation norm (c.f.,
\eqref{eq:def:variation}) of \(f\) is the sum of the absolute values
of its coefficients:
\begin{align}
  \Vert f \Vert_{\vv} = \Vert \beta \Vert_{1} = \vert \beta_{0} \vert +
   \sum_{\mathcal{S}\subset \lbrace 1, \ldots, k\rbrace}
  \sum_{j \in \mathcal{I}(\mathcal{S})}
  \vert \beta_{\mathcal{S},j} \vert.
  \label{eq:var:norm:L1:norm}
\end{align}

Generally, the HAL estimator is defined as the minimizer of the
empirical risk over all linear combinations of indicator basis
functions under the constraint that the sectional variation norm,
i.e., the absolute value of the coefficients, is smaller than or equal
to a finite constant. We get back to this in Section
\ref{app:sec:hal:estimator}.

\subsection{Parametrization of nuisance parameters}
\label{app:parametrization:nuisance}

In this section we detail parametrizations of our nuisance parameters
to apply the tools from the previous section to define HAL estimation.
To formulate the assumptions we need, recall first that, for any
subject, \(\bar{O}(t)\) only changes at the observed event times
\(T_{1}, \ldots, T_{K({t})}\).  It is essential to our analysis that
we at any point in time \(t\) can represent the observed data
\(\bar{O}(t)\) in terms of a finite-dimensional vector.  For this
purpose, we define:
\begin{align*}
   \bar{O}_k  = 
  \big\lbrace \big(L_0, s, dN^a(s),A(s),dN^{\ell}(s)
    ,L(s),dN^d(s),dN^c(s)\big):s\in \{T_{j}\}_{j=0}^{k}
    \big\rbrace , 
\end{align*}
where \(\{T_{k}\}_{k=1}^{K}\) denotes the ordered set of unique event
times of one subject.  Then \(\bar{O}_k \in \R^{kd'+d_0}\) where
\(d'\in\N\) denotes the dimension of an observation at a monitoring
time \(T_k\), i.e.,
\begin{align*}
  \big( T_k, dN^a(T_k),A(T_k),dN^{\ell}(T_k)
  ,L(T_k),dN^d(T_k),dN^c(T_k)\big) \in \R^{d'},
\end{align*}
and \(d_0\in\N\) is the dimension of \(L_0\).  Following the notation
from Section \ref{sec:initial:hal}, where
\({x}(\mathcal{S})= (x_j)_{ j \in \mathcal{S}} \) denotes the
\(\mathcal{S}\)-specific coordinates of \(x\), we use
\(\bar{O}_{k} (\mathcal{S})\) to denote the \(\mathcal{S}\)-specific
coordinates of \(\bar{O}_{k}\) for an index set
\(\mathcal{S}\subset \lbrace 1, \ldots,{kd'+d_0}\rbrace\).

Recall that we need initial estimators for the following time-sequence
of conditional densities, conditional expectations, and conditional
intensities:
\begin{equation*}
  \begin{split}
    G &= \big(
    \pi_{t}, {\Lambda}^{c}(t)  \big)_{t\in [0,\tau]},  \qquad \text{and,}\\
    {\bm{Z}} &= \big( {Z}^{G^*}_{t}, {Z}^{G^*}_{t,L(t)},
    {\Lambda}^{\ell}(t), {\Lambda}^{a}(t), {\Lambda}^{d}(t)
    \big)_{t\in [0,\tmax]} .
  \end{split}
\end{equation*}
For estimation of \({Z}^{G^*}_{t}\) and \({Z}^{G^*}_{t,L(t)}\), we
will proceed by estimating the conditional density \(\mu_t\)
directly. Then we construct substitution estimators for
\({Z}^{G^*}_{t}\) and \({Z}^{G^*}_{t,L(t)}\) based on estimators for
\(\mu_t,\Lambda^{\ell}(t), \Lambda^{a}(t), \Lambda^{d}(t)\) by
evaluating the g-computation formula. Accordingly, we define
parametrizations and loss functions for the conditional densities
\(\pi_t\) and \(\mu_t\), and for the conditional intensities
\(\Lambda^c, \Lambda^\ell, \Lambda^a, \Lambda^d\).

\subsubsection{Parametrizations and loss functions}
\label{app:fk:loss}

We parametrize the conditional density \(\mu_t\) of \(L(t)\) in terms
of a function \(f^L(t, \bar{O}(t)) \), the conditional distribution
\(\pi_t\) of \(A(t)\) in terms of \(f^A(t, \bar{O}(t)) \), and the
intensities \(\Lambda^x(t)\) in terms of \(f^x(t, \bar{O}(t)) \),
\(x=\ell,a,c,d\).

To keep the notation simple, we assume that \(A(t)\) and \(L(t)\) are
binary such that \(\pi_t\) and \(\mu_t\) can be parametrized as
follows:
\begin{align*}
  \pi_t ( 1 \, \vert \, \bar{O}(t) )
  &= 
    \expit ( f^A(t,\bar{O}(t)) ),                        ,
\end{align*}
and
\begin{align*}
  \mu_t( 1 \, \vert \,\bar{O}(t) )
  &= 
    \expit ( f^L(t,\bar{O}(t)) ). 
\end{align*}
For the intensities, \(\Lambda^x\), \(x=\ell,a,d,c\), we consider the
continuous case.  We parametrize the intensity process \( \lambda^x\)
for which
\(\Lambda^x(t \, \vert \, \F_{t-}) = \int_0^t \lambda^x (s \,\vert\,
\F_{s-}) ds \) as follows:
\begin{align*}
  \lambda^x(t \, \vert \, \bar{O}(t) )  = 
  \exp ( f^x(t,\bar{O}(t)) ) , \qquad x= \ell, a, d, c. 
\end{align*}

Now, all required nuisance parameters are represented by a function
\(f^x\), \(x \in \lbrace L, A, c, a, \ell, d \rbrace\).  We further
parametrize each \(f^x\) in terms of a sequence of real-valued
functions \((t, \bar{O}_k )\mapsto f^x_k ( t, \bar{O}_k)\) with a
fixed\hyp{}dimensional support, such that:
\begin{align}
  f^x(t, \bar{O}(t)) = \sum_{k=0}^{K}
  \1 \lbrace K(t) = k \rbrace \, f_k^x(t, \bar{O}_{k}), \quad x
  \in \lbrace L, A, c, a, \ell, d \rbrace.
  \label{eq:rep:f:Kt:0}
\end{align}

The following assumption is central: It will allow us to define HAL
estimators and it provides the basis for establishing condition 2 of
Theorem \ref{thm:eff:estimator}.  Assumption
\ref{ass:f:all:k:cadlag:finite:variation} is formulated for \(f_k^x\)
defined by Display \eqref{eq:rep:f:Kt:0}, but translates to \( f^x\)
because the sum in \eqref{eq:rep:f:Kt:0} is
finite.

\begin{assumption}[Càdlàg and finite variation]
  We assume that \(f^x_k\in \normalfont \FF_{\vv, \mathscr{M}^x}\) for all \(k\le K\) and
  all \(x=L,A,c,a,\ell,d\), that is, \(f^x_k\) is \cadlag and has
  sectional variation norm bounded by a constant \(\mathscr{M}^x <\infty\).
  \label{ass:f:all:k:cadlag:finite:variation}
\end{assumption}

For \(x=L,A,c,a,\ell,d\), let
\((O, f^x) \mapsto \mathscr{L}_{x}(f^x)(O)\) be the log-likelihood
loss. For example, for \(x=L\),
\((O, f^L) \mapsto \mathscr{L}_{L}(f^L)(O)\) is defined as:
\begin{align*}
  \mathscr{L}_{L} (f^L) (O) &=
  \sum_{k=0}^{K}
  \Delta N^\ell(t_k)  \big( L(k)
  \log \big( 1+\exp(- f_{k}^L(  T_k, \bar{O}_{k} ) )\big)  \\
 &\qquad\qquad \,+
  \big(1-L(k)\big)
   \log \big( 1+\exp( f_{k}^L( T_k, \bar{O}_{k} ) )\big)\big),  
\end{align*}
whereas for \(x=\ell\), in the continuous case considered in Section
\ref{app:fk:loss}, it is:
\begin{align*}
  \mathscr{L}_{\ell} (f^\ell) (O) &=
                                    \sum_{k=1}^{K}
                                    \Delta N^\ell(t_{k}) f^\ell_{k}(t_{k}, \bar{O}_{k}) \\
                                  &\qquad\qquad - \,
                                    (t_{k}-t_{k-1}) \exp(f^\ell_{k-1}(t_{k-1}, \bar{O}_{k-1})).  
\end{align*}
We denote by \(f^x_0 = \text{argmin}_{f^x} P_0 \mathscr{L}_x (f^x) \),
the minimizer of the risk under \(P_0\).  Since the log-likelihood is
strictly proper \citep{gneiting2007strictly}, the data-generating
\(f_0^x\) attains this minimum.  We define the sum loss function
\((O,f^Q) \mapsto \mathscr{L}_Q (f^Q) (O)\) for the \(Q\)-factor of the
likelihood, i.e.,
\begin{align*}
\mathscr{L}_Q (f^Q) = \mathscr{L}_L (f^L) + \mathscr{L}_\ell
  (f^\ell) + \mathscr{L}_a (f^a) + \mathscr{L}_d (f^d),
\end{align*}
where \(f^{Q}=(f^{L},f^{\ell},f^a,f^d)\), and, also, the sum loss
function \((O, f^G)\mapsto \mathscr{L}_G (f^G) (O)\) for the
\(G\)-factor of the likelihood, i.e.,
\begin{align*}
  \mathscr{L}_G (f^G) = \mathscr{L}_A (f^A) + \mathscr{L}_c (f^c),
\end{align*}
where \(f^{G}=(f^A,f^c)\). Note that minimizing the sum of a set of
loss functions is the same as minimizing them separately.

Assumption \ref{ass:uniformly:bounded} below guarantees the oracle
properties of the cross-validation selector
\citep{van2003unified,van2006oracle}. We note that
\eqref{eq:standard:property} holds for most common loss
functions.

\begin{assumption}[Bounded loss functions]
  We assume that the loss functions are uniformly bounded in the sense
  that \(\sup_{f^x,O} \mathscr{L}_x (f^x)(O) < \infty \) a.s. where
  the supremum is over the support of \(P_0\), i.e., all possible
  realizations of \(O\) for a single subject, and all \(f^x\) such
  that \(f^x_k \in \FF_{\vv,\mathscr{M}^x}\). In addition, we assume
  that,
  \begin{align}
    \begin{split}
      \underset{f^G }{\sup}\,\, \frac{ \Vert \mathscr{L}_{G}({f}^G) -
        \mathscr{L}_{G}(f^G_0)
        \Vert_{P_0}^2 }{ \mathscr{D}_{G} ( {f}^G, f^G_0) } <\infty, \\
      \underset{f^Q }{\sup}\,\, \frac{ \Vert \mathscr{L}_{Q}({f}^Q) -
        \mathscr{L}_{Q}(f^Q_0) \Vert_{P_0}^2 }{ \mathscr{D}_{Q} (
        {f}^Q, f^Q_0) } <\infty,
\end{split}
  \label{eq:standard:property}
  \end{align}
  where
  \(\mathscr{D} ( {f}, f_0): = P_0 \mathscr{L}({f}) - P_0 \mathscr{L}(f_0) \)
  denotes the loss-based dissimilarity measure.
  \label{ass:uniformly:bounded}
\end{assumption}

As detailed in Appendix \ref{sec:hal:mle}, the representation of a
càdlàg function with finite sectional variation norm becomes a finite sum over
indicator basis functions when the function is defined on a discrete
support. The HAL estimator for \(f^x\) is defined as the minimizer of
the empirical risk over all measures with a particular
support defined by the actual observations \(\{O_i\}_{i=1}^n\).

\subsection{The HAL estimator}
\label{app:sec:hal:estimator}

We apply the representation of Section \ref{sec:hal:mle} to
\((t, \bar{O}_k )\mapsto f^x_k ( t, \bar{O}_k)\), which, again,
translates into a representation for \(f^x\) by
\eqref{eq:rep:f:Kt:0}. Then we can define the HAL estimator for
\(f^x\) as the minimizer of the empirical risk over all linear
combinations of indicator basis function for a specific set of support
points under the constraint that the sectional variation norm is
bounded by the constant \(\mathscr{M}^x\). By selecting the particular
support defined by the \(n\) observations \(\{O_i\}_{i=1}^n\), the
minimizer of the finite-dimensional minimization problem equals the
minimizer of the empirical risk over all measures \(f^x \) with
sectional variation norm smaller than \(\mathscr{M}^x\).

Specifically, let \(\mathcal{I}_{k}\) be the index set for the unique
observed values of \(\bar{O}_{k}\). Recall that
\(\bar{O}_{k}(\mathcal{S})\) denotes the \(\mathcal{S}\)-specific
coordinates \(\bar{O}_k\), and that \({t}_0 < \cdots <{t}_{{K}_n}\) is
the ordered sequence of unique times of changes. Consider functions
\({f}^x_{k,\beta}\) such that
\(\bar{O}_k \mapsto {f}^x_{k,\beta} (t, \bar{O}_k)\) has support
\(\lbrace s_{k,j} \rbrace_{j\in \mathcal{I}_{k}} \) and
\(t \mapsto {f}^x_{k,\beta} (t, \bar{O}_k)\) has support
\(\lbrace {t}_0, \ldots, {t}_{{K}_n}\rbrace\). Separating the
representation of \({f}^x_{k,\beta}\) along the lines of Section
\ref{sec:hal:mle} into terms involving the time axis
and terms involving \(\bar{O}_k\) gives:
\begin{align}
  &  f_{k, \beta}^x(t, \bar{O}_k)
    =   \sum_{r=0}^{{K}_n}  \1 \lbrace {t}_r \le t\rbrace
    \bigg( 
    \beta^x_{k,r,0} + \!\!\!\!\!\!\! \sum_{\mathcal{S} \subset \lbrace 1, \ldots, kd'+d_0\rbrace}
    \sum_{j\in \mathcal{I}_{k}} \phi^x_{k,\mathcal{S},j} ( \bar{O}_k)
    \beta^x_{k,r,\mathcal{S},j}\bigg).
      \label{eq:hal:beta:12}
\end{align}
Here, \(\beta_{k,r,\mathcal{S},j}\) is the measure that the
\(\mathcal{S}\)th section of \(f_{k,\beta}^x\) assigns to the \(j\)th
support point for \({t}_r \le t\) and
\(\phi_{k,\mathcal{S},j} (\bar{O}_k) = \1 \lbrace s_{k,j}(\mathcal{S})
\le \bar{O}_{k} (\mathcal{S}) \rbrace \) is the indicator that the
support point \(s_{k,j}(\mathcal{S})\) is smaller than or equal to
\(\bar{O}_{k}(\mathcal{S}) \).

Corresponding to the representation \(f^x_{k,\beta}\) for \(f^x_{k}\)
in \eqref{eq:hal:beta:12} we have an equivalent representation
\(f_{\beta}^x\) for \(f^x\).  Now we can define the HAL estimator as
follows:
\begin{align}
  \hat{f}_n^x= \underset{f_{\beta}^x \, : \,  \Vert f_{\beta}^x \Vert_{\vv} \le \mathscr{M}^x}{\argmin} \mathbb{P}_n
  \mathscr{L}_x (f_{\beta}^x).
  \label{eq:min:11}
\end{align}
Particularly, the sectional variation norm of the finite sum
representation \eqref{eq:hal:beta:12} equals the sum of the absolute
values of the coefficients by \eqref{eq:var:norm:L1:norm} so that we
can replace the constraint
\(\Vert f_\beta^x \Vert_{\vv} \le \mathscr{M}^x\) in \eqref{eq:min:11}
by \(\Vert f_\beta^x \Vert_{1} \le \mathscr{M}^x\). Hence, we can
compute the HAL estimator for each \(f^x\), \(x=L,A,\ell,a,d,c\), by:
\begin{align}
  \hat{f}_n^x= \underset{f_{\beta}^x \, : \, \Vert \beta \Vert_{1} \le \mathscr{M}^x}{\argmin}
  \mathbb{P}_n \mathscr{L}_x (f_{\beta}^x),
  \label{eq:hal:L1:min}
\end{align}
corresponding to \(L_1\)-penalized (Lasso) regression
\citep{tibshirani1996regression} with the indicator functions
\(\phi^x_{k,\mathcal{S},j} (\bar{O}_k)\) as covariates and
\(\beta^x_{k,r,\mathcal{S},j}\) as corresponding coefficients. For the
log-likelihood loss and the squared error loss functions, standard
software can be used to find the estimators \eqref{eq:hal:L1:min},
including selection of \(\mathscr{M}^x\) by cross-validation.  Table
\ref{table:hal:R} in Appendix \ref{app:overview:diagrams} gives a
brief overview.

\subsection{Theoretical results for HAL}
\label{sec:initial:hal:theory}

We collect here the theoretical results for HAL estimation. These
results rely on Assumptions \ref{ass:f:all:k:cadlag:finite:variation}
and \ref{ass:uniformly:bounded} of Section \ref{app:fk:loss}.

\begin{lemma}
  The canonical gradient can be written as
  \(D^*(P) = D^*(f^x \, : \; x=L,A,c, a,\ell,d)\). If
  \(\prodi_{s <t} d{G}_{t}^* / d{G}_{t} \) is uniformly bounded for all
  \(t\), then:
  \begin{align*}
    \big\lbrace D^*(f^x  :  x=L, A,c, a,\ell,d)
    \big\rbrace
  \end{align*}
  is a Donsker class.
  \label{thm:eic:donsker:all}
\end{lemma}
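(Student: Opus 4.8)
The plan is to exploit the finiteness of the number of monitoring times per subject so that $D^*(P)$ becomes a finite composition of (i) the nuisance functions $f^x$, $x=L,A,c,a,\ell,d$, and (ii) a fixed, data-independent set of basic operations -- evaluation, summation, multiplication, division (with bounded denominators by hypothesis), and integration over the product integral. Since each step is a Lipschitz or otherwise well-behaved map on the relevant function classes, and since the class $\FF_{\vv,\mathscr{M}}$ of càdlàg functions with sectional variation norm bounded by a constant is a Donsker class (this is the classical result invoked in the text, cf.\ \citet{gill1995inefficient,van2017generally}), the Donsker property is preserved under these operations, and hence $\{D^*(f^x : x = L,A,c,a,\ell,d)\}$ is Donsker.

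First I would make the representation $D^*(P) = D^*(f^x : x)$ explicit. Recall from Theorem \ref{thm:eff:ic} and the parametrization in Section \ref{app:fk:loss} that $D^*(P)$ is built from the clever weights $h^{G^*}_t = \prodi_{s<t} dG^*_s/dG_s$ (a function of $f^A, f^c$ only, uniformly bounded by assumption), the clever covariates $h^\ell_t, h^a_t, h^d_t$ and the conditional means $Z^{G^*}_t, Z^{G^*}_{t,L(t)}$ (all obtained from $f^L, f^\ell, f^a, f^d$ by evaluating the g-computation formula, which by Lemma \ref{lemma:representation:Psi:Z} is a finite iterated integral since $K \le K(\tau) < \infty$), together with the martingale integrators $M^x(dt) = N^x(dt) - \Lambda^x(dt)$ and $N^\ell(dt)$, which are fixed functions of the observed data once $\Lambda^x = \int_0^\cdot \exp(f^x_{k-1})$ is plugged in. Using the decomposition \eqref{eq:rep:f:Kt:0}, $f^x(t,\bar O(t)) = \sum_{k=0}^K \1\{K(t)=k\} f^x_k(t,\bar O_k)$ is a \emph{finite} sum (at most $K$ terms), and each $\bar O_k \in \R^{kd'+d_0}$ is finite-dimensional. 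So on the event $\{K = k\}$, which partitions the sample space into finitely many pieces, $D^*(P)$ is a fixed finite-arity function of $f^L_0,\dots,f^L_k,\dots,f^d_0,\dots,f^d_k$ and the coordinates of $\bar O_k$.

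Next I would argue Donsker preservation. By Assumption \ref{ass:f:all:k:cadlag:finite:variation}, each $f^x_k \in \FF_{\vv,\mathscr{M}^x}$, which is a universal Donsker class (the unit ball of càdlàg functions with bounded sectional variation norm has an $L_2(P_0)$-covering-number bound that is polynomial in $1/\eps$, hence finite uniform entropy integral). The relevant stability facts are standard (see, e.g., \citet[][Sections 2.10.2--2.10.3]{van2000asymptotic}): finite unions, products, and sums of uniformly bounded Donsker classes are Donsker; composition with a fixed Lipschitz function preserves the Donsker property; and since $\prodi_{s<t} dG^*_s/dG_s$ is uniformly bounded away from $0$ and $\infty$ (the hypothesis), the maps $(u,v)\mapsto u/v$ and $u \mapsto \log u$, $\exp u$ etc.\ occurring in the construction are Lipschitz on the relevant compact ranges. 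Integration against $dN^x$ or against $d\Lambda^x$ over $[0,\tau]$ contributes only finitely many summands per subject (again because $K<\infty$), so it is a finite linear combination of evaluation functionals. Assembling: $D^*$ restricted to each $\{K=k\}$ is a finite composition of Donsker-preserving operations applied to finitely many uniformly bounded Donsker classes, hence Donsker on that event; and a finite ``gluing'' over the finitely many events $\{K=k\}$ (which is just a finite sum of the form $\sum_k \1\{K=k\} D^*|_{\{K=k\}}$) is again Donsker.

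The main obstacle I anticipate is \textbf{not} the abstract preservation argument but the bookkeeping required to verify uniform boundedness of every intermediate quantity and to confirm that each composition is genuinely Lipschitz on the set where the arguments live. The conditional means $Z^{G^*}_t$ automatically lie in $[0,1]$ because $Y = N^d(\tau) \in \{0,1\}$, so the clever covariates $h^\ell_t, h^a_t, h^d_t$ are in $[-1,1]$; that part is clean. The delicate point is the division by $dG_t$ in the clever weights: one must show $\prodi_{s<t} dG^*_s/dG_s$ is bounded --- but this is precisely the hypothesis of the lemma, so it can be assumed. One also needs that the map $f^x_k \mapsto \Lambda^x = \int_0^\cdot \exp(f^x_{k-1}(s,\bar O_{k-1}))\,ds$ sends bounded-sectional-variation inputs to bounded-sectional-variation outputs, which follows because $\exp$ is Lipschitz on the bounded range of $f^x_k$ and integration does not increase variation; and that the g-computation recursion of Lemma \ref{lemma:representation:Psi:Z}, being a finite iteration of such integral operators, inherits the same property. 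Once these routine but slightly tedious stability checks are in place, the conclusion that $\{D^*(f^x : x = L,A,c,a,\ell,d)\}$ is Donsker follows from the general calculus of Donsker classes.
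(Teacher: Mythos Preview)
Your proposal is correct and follows essentially the same route as the paper's own proof: both arguments (i) make explicit that $D^*(P)$ is a function of the parametrizing càdlàg functions $f^x_k$ via the product-integral representation and the finiteness of $K$, (ii) invoke that $\FF_{\vv,\mathscr{M}}$ is Donsker, and (iii) conclude by preservation of the Donsker property under Lipschitz transformations, finite products, and finite sums, using the uniform-boundedness hypothesis on $\prodi_{s<t} dG^*_s/dG_s$ to control the ratio. Your version is in fact somewhat more careful about the bookkeeping (the partition over $\{K=k\}$, the explicit $[-1,1]$ bound on the clever covariates, and the stability of the map $f^x_k \mapsto \Lambda^x$) than the paper's sketch, but the underlying skeleton is identical.
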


\begin{proof} See Section \ref{sec:representation:eic}.  \end{proof}

\begin{thm} 
  For the loss functions \(O \mapsto\mathscr{L}_x(f^x)(O)\), for
  \(x=Q,G\), we have, under Assumptions
  \ref{ass:f:all:k:cadlag:finite:variation} and
  \ref{ass:uniformly:bounded}, that
  \begin{align*}
    \mathscr{D}_{G} ( \hat{f}^G_n, f^G_0)
    & =   P_0 \mathscr{L}_{G}(\hat{f}^G_n) -
      P_0 \mathscr{L}_{G}(f^G_0) = o_P(n^{-1/2}) , \\
    \mathscr{D}_{Q} ( \hat{f}^Q_n, f^Q_0)
    & =   P_0 \mathscr{L}_{Q}(\hat{f}^Q_n) -
      P_0 \mathscr{L}_{Q}(f^Q_0) = o_P(n^{-1/2}) ,
  \end{align*}
  and further that
  \( \Vert \hat{f}^G_n - {f}^G_0 \Vert_{P_0} = o_P(n^{-1/4})\) and
  \( \Vert \hat{f}^Q_n - {f}^Q_0 \Vert_{P_0} = o_P(n^{-1/4})\).
  \label{thm:hal:proof}
\end{thm}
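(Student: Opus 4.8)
The plan is to reduce the two displayed rates to a single component-wise statement — for each $x\in\{L,A,c,a,\ell,d\}$ one has $\mathscr{D}_x(\hat f^x_n,f^x_0)=o_P(n^{-1/2})$ and $\Vert\hat f^x_n-f^x_0\Vert_{P_0}=o_P(n^{-1/4})$ — and then to invoke the general HAL convergence theory of \cite{van2017generally}. The reduction is immediate: minimizing the sum loss $\mathscr{L}_Q=\mathscr{L}_L+\mathscr{L}_\ell+\mathscr{L}_a+\mathscr{L}_d$ is the same as minimizing each summand separately, the loss-based dissimilarities are additive, $\mathscr{D}_Q(\,\cdot\,,f^Q_0)=\sum_{x}\mathscr{D}_x(\,\cdot\,,f^x_0)$, and $\Vert\hat f^Q_n-f^Q_0\Vert_{P_0}\le\sum_x\Vert\hat f^x_n-f^x_0\Vert_{P_0}$; likewise for the $G$-factor with $x\in\{A,c\}$. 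So the two displays of the theorem follow by summing the dissimilarities and applying the triangle inequality to the norms.

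For a fixed $x$, I would first argue that $\hat f^x_n$ is exactly the empirical-risk minimizer of $\mathbb{P}_n\mathscr{L}_x(f^x)$ over the class $\mathcal{F}^x$ of càdlàg functions with sectional variation norm at most $\mathscr{M}^x$. By Assumption \ref{ass:f:all:k:cadlag:finite:variation} each $f^x_k$ lies in $\FF_{\vv,\mathscr{M}^x}$, and since $K\le K(\tau)$ is a.s. finite and \eqref{eq:rep:f:Kt:0} writes $f^x$ as a finite sum of terms $\1\{K(t)=k\}\,f^x_k$ with each $\bar O_k$ finite-dimensional, $f^x$ is again càdlàg with finite sectional variation norm; the indicator/measure representation of Section \ref{sec:hal:mle} and the definition \eqref{eq:hal:L1:min} then give the claimed variational characterization. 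Next I would import the entropy bound for this class: on a cube in $\R^d$, $\log N(\varepsilon,\FF_{\vv,\mathscr{M}},L_2(Q))\lesssim\varepsilon^{-1}(\log(1/\varepsilon))^{2(d-1)}$ uniformly over $Q$ \citep{gill1995inefficient,van2017generally,2019arXiv190709244B}, which the finite-sum structure \eqref{eq:rep:f:Kt:0} transfers to $\mathcal{F}^x$; combined with the uniform boundedness of $\mathscr{L}_x$ from Assumption \ref{ass:uniformly:bounded} and the fact that $\mathscr{L}_x$ is a uniformly Lipschitz transformation of $f^x$ on $\mathcal{F}^x$, the same bound holds for the loss-difference class $\{\mathscr{L}_x(f^x)-\mathscr{L}_x(f^x_0):f^x\in\mathcal{F}^x\}$.

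With this entropy bound and the variance condition \eqref{eq:standard:property} — which controls $\Vert\mathscr{L}_x(f^x)-\mathscr{L}_x(f^x_0)\Vert_{P_0}^2$ by a multiple of $\mathscr{D}_x(f^x,f^x_0)$ — I would apply the standard rate theorem for M-estimators over a Donsker class \citep{van2000asymptotic,van2017generally}: the $\varepsilon^{-1}$-type entropy gives an entropy integral of order $\sqrt\delta$ up to logarithmic factors, hence $\mathscr{D}_x(\hat f^x_n,f^x_0)=O_P(n^{-2/3}(\log n)^{c})$ for a finite power $c=c(d)$, which is $o_P(n^{-1/2})$. To get the $L_2$ rate on $f^x$ itself, I would use that $\mathscr{L}_x$ is the log-likelihood loss for the parametrizations of Section \ref{app:fk:loss}, whose densities and intensities are bounded and bounded away from $0$; a second-order expansion of $f\mapsto P_0\mathscr{L}_x(f)$ around its minimizer $f^x_0$ then gives a matching curvature lower bound $\mathscr{D}_x(f^x,f^x_0)\gtrsim\Vert f^x-f^x_0\Vert_{P_0}^2$ on $\mathcal{F}^x$, so that $\Vert\hat f^x_n-f^x_0\Vert_{P_0}^2\lesssim\mathscr{D}_x(\hat f^x_n,f^x_0)=o_P(n^{-1/2})$, i.e.\ $\Vert\hat f^x_n-f^x_0\Vert_{P_0}=o_P(n^{-1/4})$. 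Summing over $x$ as in the first step produces the two displays.

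The main obstacle is the entropy transfer in the second step above. Two things need genuine care: that the decomposition of $f^x$ over the \emph{random} number of events $k\le K$ does not inflate either the sectional variation norm or the metric entropy — this is where finiteness of $K$ and the fixed-dimensional support of each $\bar O_k$ are indispensable — and that the cross-terms appearing in the intensity losses (products of $f^x_{k-1}$ evaluated at consecutive event times with the increments $t_k-t_{k-1}$, as in $\mathscr{L}_\ell$) are uniformly Lipschitz in $f^x$ on $\mathcal{F}^x$, so that the covering-number bound carries over to the loss class. Both follow from the structural hypotheses already in force (Assumptions \ref{ass:f:all:k:cadlag:finite:variation} and \ref{ass:uniformly:bounded}), so past this bookkeeping no new idea beyond the results of \cite{van2017generally} is needed.
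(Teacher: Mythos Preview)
Your proposal is correct and closely parallels the paper's argument: both work component-wise, both rely on the Donsker property of the loss-difference class (the paper establishes this as Lemma \ref{thm:loss:donsker:all}), both use the variance condition \eqref{eq:standard:property}, and both deduce the $L_2$ rate from a curvature bound (the paper phrases this as ``Kullback--Leibler behaves as the squared $L_2(P_0)$-norm''). The route differs in one respect. You invoke explicit metric-entropy bounds and the M-estimator rate theorem to obtain $\mathscr{D}_x=O_P(n^{-2/3}(\log n)^c)$ directly. The paper instead runs a two-step bootstrapping argument: the basic inequality $\mathscr{D}_x\le -(\mathbb{P}_n-P_0)\big(\mathscr{L}_x(\hat f^x_n)-\mathscr{L}_x(f^x_0)\big)$ and the Donsker property give $\mathscr{D}_x=O_P(n^{-1/2})$; the variance condition then forces $\Vert\mathscr{L}_x(\hat f^x_n)-\mathscr{L}_x(f^x_0)\Vert_{P_0}\to 0$; and asymptotic equicontinuity upgrades the empirical-process bound to $o_P(n^{-1/2})$. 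Your approach buys the sharper rate that the paper only cites as a remark via \cite{2019arXiv190709244B}; the paper's approach buys a shorter argument that needs only the qualitative Donsker property and sidesteps the entropy-transfer bookkeeping you correctly flag as the main technical obstacle.
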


\begin{proof} See Section \ref{sec:hal:proof:Z}.  On a final note, we
  remark that the \(n^{-1/4}\) rate can be improved to
  \(n^{-1/3}\log(n)^{d/2}\) as established by
  \cite{2019arXiv190709244B}.  \end{proof}

In summary, Theorem \ref{thm:hal:proof} combined with Remark
\ref{remark:R2} implies that using HAL for initial estimation fulfills
condition 1 of Theorem \ref{thm:eff:estimator}. Moreover, under our
nonparametric smoothness assumptions (Assumption
\ref{ass:f:all:k:cadlag:finite:variation}), the first part of
condition 2 of Theorem \ref{thm:eff:estimator} holds by Lemma
\ref{thm:eic:donsker:all}. The second part of condition 2 can be seen
to hold by Lemma \ref{thm:eic:donsker:all}, the functional delta
method, and Hadamard differentiability of the product integral
\citep{gill1990survey,van2000asymptotic}.

\subsection{Proofs of results}
\label{app:hal:proofs}

We here prove the HAL results. Particularly, Section
\ref{sec:representation:eic} establishes Donsker properties of the
canonical gradient (Lemma \ref{thm:eic:donsker:all}) and the loss
functions (Lemma \ref{thm:loss:donsker:all}), and Section
\ref{sec:hal:proof:Z} presents the final HAL convergence proof.

\subsubsection{Donsker class conditions}
\label{sec:representation:eic}

Lemma \ref{thm:eic:donsker:all} (see Section
\ref{sec:initial:hal:theory}) gives the Donsker properties of the
efficient influence function. The subsequent Lemma
\ref{thm:loss:donsker:all} gives Donsker properties of the loss
functions as needed for the HAL proof of Theorem \ref{thm:hal:proof}.

\begin{proof}\noindent (Lemma \ref{thm:eic:donsker:all}).\\
Consider the following representation of the efficient
influence curve,
\begin{align*}
  D^*(P)
  &= \int_0^{\tmax}\bigg( \Prodi_{s <t } \frac{dG^*_s}{dG_s} \bigg)  \,
    \Big( \int Y \Prodi_{s > t  } dQ_s \Prodi_{s  \ge t } dG^*_s -
    \int Y \Prodi_{ s\ge t } dQ_s  dG^*_s \Big)
    . 
\end{align*}
First, consider:
\begin{align*}
  \Prodi_{s<t} d{G}_{s} (O)=
  \prod_{k=1}^{K(t)} \big( d\Lambda^c (T_k) \big)^{\Delta N^c(T_k)}\big( \pi_{T_k} (A(T_k))\big)^{\Delta N^a(T_k)}
  \Prodi_{s \in (0,t)}\big( 1- d\Lambda^c (s) \big),
\end{align*}
and, in the same way,
\begin{align*}
  \Prodi_{s\ge t} d{G}_{s} (O)=
  \prod_{k=K(t)+1}^{K} \big( d\Lambda^c (T_k) \big)^{\Delta N^c(T_k)}\big( \pi_{T_k} (A(T_k))\big)^{\Delta N^a(T_k)}
 \Prodi_{s \in (t, \tau)}
 \big( 1- d\Lambda^c (s) \big),
\end{align*}
with corresponding versions of \(\prodi_{s<t} d{G}^*_{s}\) and
\(\prodi_{s\ge t} dG^*_s\). Note that we have suppressed the
conditioning on \(\F_{t-}\) to simplify the presentation. Similarly:
\begin{align*}
  \Prodi_{s<t} d{Q}_{s} (O)=
  \prod_{k=1}^{K(t)} \big( d\Lambda^\ell(T_k) \big)^{\Delta N^\ell(T_k)}\big( \mu_{T_k}
  (L(T_k))\big)^{\Delta N^\ell(T_k)}
  \Prodi_{s \in (0,t)} \big( 1- d\Lambda^\ell (s) \big) \\
  \prod_{k=1}^{K(t)}  \big( d\Lambda^a(T_k) \big)^{\Delta N^a(T_k)}
 \Prodi_{s \in (0,t)} \big( 1- d\Lambda^a (s) \big)\\
  \prod_{k=1}^{K(t)} \big( d\Lambda^d(T_k) \big)^{\Delta N^d(T_k)}
   \Prodi_{s \in (0,t)}\big( 1- d\Lambda^d (s) \big) ,
\end{align*}
and
\begin{align*}
  \Prodi_{s\ge t} d{Q}_{s} (O)=
  \prod_{k=K(t)+1}^{K} \big( d\Lambda^\ell(T_k) \big)^{\Delta N^\ell(T_k)}\big( \mu_{T_k}
  (L(T_k))\big)^{\Delta N^\ell(T_k)}
  \Prodi_{s \in (t, \tau)} \big( 1- d\Lambda^\ell (s) \big) \\
  \prod_{k=1}^{K(t)}  \big( d\Lambda^a(T_k) \big)^{\Delta N^a(T_k)}
  \Prodi_{s \in (t, \tau)} \big( 1- d\Lambda^a (s) \big)\\
  \prod_{k=1}^{K(t)} \big( d\Lambda^d(T_k) \big)^{\Delta N^d(T_k)}
  \Prodi_{s \in (t, \tau)} \big( 1- d\Lambda^d (s) \big) .
\end{align*}
We now plug in our particular parametrizations from Section
\ref{app:fk:loss}:
\begin{align*}
  \pi_{T_k} (A(T_k)) &=  \expit( f_k^A( T_k,  \bar{O}_{k-1} ))^{A(T_k)}
                       \big(1- \expit( f_k^A( T_k,  \bar{O}_{k-1}
                       )), \\
  \mu_{T_k}
  (L(T_k)) & = \expit( f_k^L( T_k, \bar{O}_{k-1} ))^{L(T_k)} \big(1- \expit( f_k^L( T_k, \bar{O}_{k-1} )),
\end{align*}
and
\begin{align*}
  d\Lambda^x (T_k)
  &= \exp \big( f_{k-1}^x(T_{k-1}, \bar{O}_{k-1})\big) (T_k - T_{k-1}), \\
  \Prodi_{s \in (0,  t)} \big( 1- d\Lambda^x (s) \big)
  &=
    \prod_{r=1}^{K_n}    \Prodi_{s \in (t_r, \min(t, t_{r+1}))} \big( 1- d\Lambda^x (s) \big)\\
  & \!\!\!\!\!\!\!\!\!\!\!\!\!\!\!\!\!\! =  \exp \bigg(
    \sum_{r=1}^{K_n} \1 \lbrace t_r < t\rbrace  \big(\min(t, t_{r+1}) - t_r\big) \exp( f_{K(t_r)}^x(t_r, \bar{O}_{K(t_r)}))
    \bigg), \\
  \Prodi_{s \in (t,  \tmax)} \big( 1- d\Lambda^x (s) \big)
  &=
    \prod_{r=1}^{K_n}    \Prodi_{s \in (\max(t, t_r), t_{r+1})} \big( 1- d\Lambda^x (s) \big)\\
  & \!\!\!\!\!\!\!\!\!\!\!\!\!\!\!\!\!\! = \exp \bigg(  \sum_{r=1}^{K_n}  \1 \lbrace t_{r+1} \ge t\rbrace 
    \big(t_{r+1} - \max(t, t_r)\big) \exp( f_{K(t_r)}^x(t_r, \bar{O}_{K(t_r)}))
    \bigg), 
\end{align*}
for \(x=c, a, \ell, d\).  This shows the first statement of Lemma
\ref{thm:eic:donsker:all}.  By assumption, \(f^x_k\) is uniformly
bounded. Furthermore, \(\prodi_{s<t} d{G}_{s}^* / d{G}_{s} \) is
uniformly bounded away from zero which preserves the Donsker property
of the ratio.  Since the set of \cadlag functions with finite
variation norm is a Donsker class and since the Donsker property is
preserved under Lipschitz transformations and also under products and
sums \citep{van1996weak}, we conclude that
\(\big\lbrace D^*(f^x : x=L, A,c, a,\ell,d)\big\rbrace\) is a Donsker
class.
\end{proof}

\begin{lemma}
  For a set of constants \(\mathscr{M}^x< \infty\),
  \(x=L, A,a,\ell,d,c\), we have that
  \( \big\lbrace \mathscr{L}_x(f^x) \big\rbrace\) is a Donsker class.
  \label{thm:loss:donsker:all}
\end{lemma}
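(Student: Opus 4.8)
The plan is to exploit that each loss function $\mathscr{L}_x(f^x)$ decomposes, via the representation \eqref{eq:rep:f:Kt:0}, into a sum over $k\le K$ of terms — a sum that is finite because each subject has only finitely many monitoring times — where each summand is a fixed, uniformly Lipschitz scalar transformation of $f^x_k$ evaluated at the observed vector $(T_k,\bar O_k)$, multiplied by a fixed data-dependent factor. Starting from the base fact that $\FF_{\vv,\mathscr{M}^x}$, the class of càdlàg functions on $[0,\eta]$ with sectional variation norm bounded by $\mathscr{M}^x$, is a Donsker class \citep{gill1995inefficient,van2017generally,van1996weak}, I would build up the loss class by the standard permanence properties of Donsker classes, mirroring the argument already used for Lemma \ref{thm:eic:donsker:all}.

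First I would record the explicit forms of the summands. For $x=L$ and $x=A$ the relevant scalar transformations are the logistic maps $u\mapsto\log(1+e^{-u})$ and $u\mapsto\log(1+e^{u})$; for the continuous intensities $x=\ell,a,d,c$ they are the identity $u\mapsto u$ and $u\mapsto (t_k-t_{k-1})e^{u}$. Each is smooth, and — crucially — it is Lipschitz \emph{uniformly over the admissible class}: since $f^x_k\in\FF_{\vv,\mathscr{M}^x}$ lives on the compact set $[0,\eta]$ with sectional variation norm at most $\mathscr{M}^x$, one has $\sup|f^x_k|\le\mathscr{M}^x$, so the argument of each transformation stays in a fixed bounded interval on which $e^u$ and the logistic maps are Lipschitz; the time increments $t_k-t_{k-1}\le\tau$ are bounded and fixed.

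Next I would compose. For fixed $k$, the map $o\mapsto(T_k(o),\bar O_k(o))$ into $[0,\eta]$ is a fixed measurable function of the data, so $\{o\mapsto f^x_k(T_k(o),\bar O_k(o)) : f^x_k\in\FF_{\vv,\mathscr{M}^x}\}$ is again Donsker, since composition with a fixed map does not increase the bracketing or uniform entropy. Composing with the fixed Lipschitz transformation above preserves the Donsker property \citep[][Theorem 2.10.6]{van1996weak}, and multiplying by the fixed, uniformly bounded data-dependent factors — the indicators $\mathbb{1}\{K(t_k)=k\}$, the jump increments $\Delta N^x(t_k)$, the binary values $L(k)$ — again preserves it. Finally $\mathscr{L}_x(f^x)$ is the (per-subject finite) sum of such terms, and a finite sum of Donsker classes is Donsker \citep{van1996weak}; together with the uniform boundedness of the loss from Assumption \ref{ass:uniformly:bounded}, which supplies an integrable envelope, this yields that $\{\mathscr{L}_x(f^x)\}$ is a Donsker class.

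The main obstacle I expect is bookkeeping rather than anything deep: making sure the decomposition \eqref{eq:rep:f:Kt:0} genuinely produces a sum with controllably many terms so that the finite-sum permanence property applies, and checking that the scalar transformations remain Lipschitz \emph{uniformly over} $\FF_{\vv,\mathscr{M}^x}$ — which is precisely where the finite-sectional-variation bound on the compact domain $[0,\eta]$ is used to pin the arguments into a fixed compact range. The rest is a routine application of the permanence theorems for Donsker classes, entirely parallel to the proof of Lemma \ref{thm:eic:donsker:all}.
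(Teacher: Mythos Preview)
Your proposal is correct and follows essentially the same route as the paper's own proof: write out the loss functions explicitly as finite sums of terms, observe that the sectional variation norm bound on $f^x_k$ forces uniform boundedness so that $\exp$ and the logistic maps are Lipschitz on the relevant range, and then invoke the permanence of the Donsker property under Lipschitz transformations, products, and finite sums starting from the Donsker class $\FF_{\vv,\mathscr{M}^x}$. The paper's argument is slightly terser but structurally identical.
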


\begin{proof} 
  The log-likelihood loss \(\mathscr{L}_A\) for \(f^A\) can be
  written:
\begin{align*}
  \mathscr{L}_{A} (f^A) (O) =
  \sum_{k=0}^{K}
   -  \Delta N^a(T_k)  \Big( A(T_k)
  \log \big( 1+\exp(- f_{k}^A( T_k, \bar{O}_{k} ) )\big)  \\
  + \,
  \big(1-A(T_k)\big)
  \log \big( 1+\exp( f_{k}^A( T_k, \bar{O}_{k} ) )\big)\Big), 
\end{align*}
and correspondingly for log-likelihood loss \(\mathscr{L}_L\) for
\(f^L\):
\begin{align*}
  \mathscr{L}_{L} (f^L) (O) =
  \sum_{k=0}^{K}
   -  \Delta N^\ell(T_k)  \Big( L(T_k)
  \log \big( 1+\exp(- f_{k}^L(  T_k, \bar{O}_{k} ) )\big)  \\
  + \,
  \big(1-L(T_k)\big)
  \log \big( 1+\exp( f_{k}^L( T_k, \bar{O}_{k} ) )\big)\Big). 
\end{align*}
For \(x=c, a, \ell, d\), we write the loss function as:
\begin{align*}
  \mathscr{L}_{x} (f^x) (O) &=
                              \sum_{k=1}^{K}
                              \Delta N^x(t_{k})  f^x_{k}(t_{k}, \bar{O}_{k}) \\
                            &\qquad\qquad - \,
                              \sum_{r=1}^{K_n} (t_{r}-t_{r-1}) \exp(f^x_{K(t_r)-1}(t_{r-1}, \bar{O}_{K(t_r)-1})).  
\end{align*}
By Assumption \ref{ass:f:all:k:cadlag:finite:variation}, \(f^x_k\) is
uniformly bounded. Further note that
\(\inf_{f^x_k} \,(1+\exp(f^x_k)) \) \( \ge 0\). Accordingly, \(f^x_k\)
only ranges over values on which \(z \mapsto \exp(z) \) and
\(z \mapsto \log \, z\) are Lipschitz continuous functions. Since the
set of \cadlag functions with finite variation norm is a Donsker class
and since the Donsker property is preserved under Lipschitz
transformations and also under products and sums \citep{van1996weak}
we conclude that \(\mathscr{L}_x(f^x)\) for all \(x=L,A,c, a,\ell,d\)
is a Donsker class. \end{proof}

\subsubsection{HAL proof (Theorem \ref{thm:hal:proof})}
\label{sec:hal:proof:Z}

We refer to the general HAL proof \citep{van2017generally}. What we
need to show is that:
\begin{align}
  \mathscr{D}_{x} ( \hat{f}^x_n, f^x_0)
  & =   P_0 \mathscr{L}_{x}(\hat{f}^x_n) -
    P_0 \mathscr{L}_{x}(f^x_0) = o_P(n^{-1/2}) , \label{eq:loss:diss:x}
\end{align}
for \(x=Q,G\).  By the general HAL proof, we have that
\(\mathscr{D}_{x} ( \hat{f}^x_n, f^x_0)\) is bounded by: 
\begin{align*}
  - ( \mathbb{P}_n - P_0 ) \big(\mathscr{L}_{x}(\hat{f}^x_n) - \mathscr{L}_{x}(f^x_0)\big).  
\end{align*}
Since \(\mathscr{L}_{x}(\hat{f}^x_n) - \mathscr{L}_{x}(f^x_0)\) falls
in a Donsker class (Lemma \ref{thm:loss:donsker:all}) it follows that
\( \mathscr{D}_{x} ( \hat{f}^x_n, f^x_0) = O_P(n^{-1/2})\) which,
again, implies that
\(P_0 (\mathscr{L}_{x}(\hat{f}^x_n) - \mathscr{L}_{x}(f^x_0) )^2 =
O_P(n^{-1/2}) \). The latter implication relies on
\eqref{eq:standard:property} stated in Assumption
\ref{ass:uniformly:bounded}, and holds for the squared error loss and
the log-likelihood loss as long as these are uniformly bounded (c.f.,
Assumption \ref{ass:uniformly:bounded}). It then follows by the
Donsker theorem that
\(- ( \mathbb{P}_n - P_0 ) (\mathscr{L}_{x}(\hat{f}^x_n) -
\mathscr{L}_{x}(f^x_0)) = o_P(n^{-1/2})\) which gives
\(\mathscr{D}_{x} ( \hat{f}^x_n, f^x_0) = o_P(n^{-1/2})\).

Finally, since the loss-based dissimilarity (i.e., the
Kullback-Leibler divergence) behaves as the squared
\(L_2 (P_0 )\)-norm \citep[see, for example,][Lemma
4]{van2017generally}, it follows that
\( \mathscr{D}_{x} ( \hat{f}^x_n, f^x_0)= o_P(n^{-1/2})\) implies
\(\Vert \hat{f}^x_n - {f}^x_0 \Vert_{P_0} = o_P(n^{-1/4})\).

\newpage

\section{Overview}
\label{app:overview:diagrams}

\subsection{Overview of notation}
\label{app:sub:overview:notation}

\quad

\begin{table}[!h]
\centering
\begin{tabular}{ l l r }
  \hline \\[-0.9em]
  \(L_0\in \R^{d_0}\) & baseline covariate vector & \\[0.0em]
  \(A(t)\in \mathcal{A}\) & treatment decision at time \(t\) & \\[0.0em]
  \(L(t) \in \R^d\) & covariate vector \(t\) & \\[0.0em]
  \(N^a(t)\) &  \multicolumn{2}{l}{counting process recording changes in treatment}  \\[0.0em]
  \(N^{\ell}(t)\) &  \multicolumn{2}{l}{counting process recording changes in covariates}  \\[0.0em]
  \(N^{c}(t)\) &  \multicolumn{2}{l}{counting process recording changes in censoring status}  \\[0.0em]
  \(N^{d}(t)\) &  \multicolumn{2}{l}{counting process recording changes in death status}  \\[0.0em]
  \(\Lambda^x\) &  \multicolumn{2}{l}{the
                  cumulative intensity
                  characterizing the compensator of \(N^x\), and }  \\[0.0em]
  \(M^x=N^x - \Lambda^x\) &  \multicolumn{2}{l}{the corresponding martingale, \(x=a,\ell,c,d\)}  \\[0.0em]
  \(\pi_t\) & \multicolumn{2}{l}{the conditional density of \(A(t)\); \(\pi_{0,t}(a \,|\, \F_{t-} ) =P(A(t)=a \,|\,
              \F_{t-})\)} \\[0.0em]
  \(\mu_t\) & \multicolumn{2}{l}{the conditional density of \(L(t)\);
              \(\mu_{0,t}(\ell \, \vert\, \F_{t-} ) \)} \\[0.0em]
  \(K(t)\) & the subject-specific total number of unique events in \([0,t]\) & p. \pageref{eq:K:t} \\[0.0em]
  \(K= K({\tau})\) & the subject-specific total number of unique events in \([0,\tau]\) &  \\[0.0em]
  \(T^a_{1} <\cdots <T^a_{N^a(\tau)}\) & the subject-specific jump times of \(N^a\) & \\[0.0em]
  \(T^{\ell}_{1} <\cdots <T^{\ell}_{N^{\ell}(\tau)}\) & the subject-specific jump times of \(N^{\ell}\) & \\[0.0em]
  \(T_{1} <\cdots <T_{K}\) &  subject-specific unique event times  & \\[0.0em]
  \(O=\bar{O}(\tau)\) & subject-specific observed data in \([0,\tau]\) & p. \pageref{eq:observed:data} \\[0.0em]
  \(P_0 \in \mathcal{M}\) & the distribution of \(O\)& p. \pageref{eq:like} \\[0.0em]
  \( \mathcal{M}\) & the statistical model containing \(P_0\)& p. \pageref{eq:statistical:model} \\%[0.5em]
  \(G\), \(g\) & interventional part of \(P \in \mathcal{M}\) and its density & p. \pageref{eq:g:0} \\[0.0em]
  \(Q\), \(q\) & non-interventional part of \(P \in \mathcal{M}\) and its density & \\[0.0em]
  \(G^*\), \(g^*\) & intervention and its density & \\[0.0em]
  \(P_{Q,G^*}=P^{G^*}\) & \multicolumn{2}{l}{the post-interventional distribution defined
                          by the g-computation formula} \\%[0.0em]  
  \(\Psi^{G^*} \, : \, \mathcal{M} \rightarrow \R\) & target parameter for fixed
                                                      intervention \(G^*\)& p. \pageref{eq:target:parameter} \\[0.0em]
  \(\bm{Z}\) &  \(\bm{Z}= ( {Z}^{G^*}_{t}, {Z}^{G^*}_{t,L(t)},
               {\Lambda}^{\ell}(t), {\Lambda}^{a}(t), {\Lambda}^{d}(t)
               )_{t\in [0,\tmax]}  \)
                                                  & p.
                                                    \pageref{eq:rel:part:Z} \\[0.0em]
  \(Z^{G^*}_t\) &\(Z^{G^*}_t = \EE_{P^{G^*}} \big[ Y \, \big\vert \, L(t),
                  N^{\ell}(t),N^a(t),
                  N^d(t), \F_{t-}\big] \), \(t\in [0,\tau]\)&  p. \pageref{eq:def:Zt:Lt} \\[0.0em]
  \(Z^{G^*}_{t,L(t)}\) &\(Z^{G^*}_{t,L(t)}=  \EE_{P^{G^*}} \big[ Y \, \big\vert \,
                         N^{\ell}(t),N^a(t),
                         N^d(t), \F_{t-}\big]\), \(t\in [0,\tau]\) & p. \pageref{eq:def:Zt:Lt} \\[0.0em]
  \(h^{G^*}_t\) & clever weights, \(t\in (0,\tau]\) & p. \pageref{eq:clever:weight} \\[0.0em]
  \(h^{\ell}_t, h^a_t, h^d_t\) & clever covariates, \(t\in (0,\tau]\) & p. \pageref{eq:h:ell} \\%[0.5em]
  \(O_1, \ldots, O_n \sim P_0\) & observed data &  p. \pageref{eq:observed:data} \\[0.0em]
  \(\mathbb{P}_n\) & the empirical distribution of \(\{O_i\}_{i=1}^n\) &  \\[0.0em]
  \(t_0 <t_1<\dots<t_{{K}_n} \) &the ordered sequence
                                  of unique times of changes \(\cup_{i=1}^n \{T_{i,k}\}_{k=1}^{K_{i}}\)&  p. \pageref{eq:observed:event:times} \\[0.0em]
  \({K}_n= \sum_{i=1}^n K_{i}\) & the total number of observation times for the data \(\{O_i\}_{i=1}^n\) &\\[0.0em]
  \(\hat{G}_n\) &  estimator for
                  \(G_0\) on \( [0,\tmax]\)& \\[0.0em]
  \(\hat{\bm{Z}}^{m}_n\) & estimator for  \(\bm{Z}\)& \\[0.0em]
  \(\hat{P}_n^*\) & targeted estimator, characterized by  \((\hat{\bm{Z}}^*_n, \hat{G}_n)\), where
                    \(\hat{\bm{Z}}^*_n= \hat{\bm{Z}}^{\mm=\mm^*}_n\)   &\\[0.0em]
  \( \hat{\psi}^{G^*}_n=\Psi^{G^*} (\hat{P}^{*}_{n})\)&  TMLE estimator for the target parameter&\\[0.3em]
  \hline
\end{tabular}
\end{table}

\newpage

\subsection{Details of targeting algorithm}
\label{app:overview:targeting}

We here provide a more detailed description of the targeting procedure
(Section \ref{sec:TMLE}). As in Section \ref{sec:TMLE}, we here use
the notation:
\begin{align}
 &{Z}_{t_{r}}^{G^*}
   = \EE_{P^{G^*}} [ Y \, \vert \, L(t_{r}),
    N^{\ell}(t_{r}), N^{a}(t_{r}), N^{d}(t_{r}),  \F_{t_{r-1}}] , \label{eq:Z:tilde:1}  \\
& {Z}_{t_{r},L(t_{r})}^{G^*}
   = \EE_{P^{G^*}} [ Y \, \vert \,
    N^{\ell}(t_{r}), N^{a}(t_{r}), N^{d}(t_{r}),  \F_{t_{r-1}}] ,\label{eq:Z:tilde:L:1}
\end{align}
for \(r=1, \ldots, {K}_n\). Further, we now introduce:
\begin{align}
  {Z}_{t_{r},N^{\ell}(t_{r})}^{G^*}
  &=                  \EE_{P^{G^*}} [ Y \, \vert \,  
    N^{a}(t_{r}), N^{d}(t_{r}),  \F_{t_{r-1}}] , \label{eq:Z:tilde:ell:1}\\
  {Z}_{t_{r},N^{a}(t_{r})}^{G^*}
  &= 
    \EE_{P^{G^*}} [ Y \, \vert \,
    N^{d}(t_{r}),  \F_{t_{r-1}}] \label{eq:Z:tilde:a:1} \\
  {Z}_{t_{r},N^{d}(t_{r})}^{G^*}
  &=
    \EE_{P^{G^*}} [ Y \, \vert \,
    \F_{t_{r-1}}] ,\label{eq:Z:tilde:d:1}
\end{align}
where the subscript `\(N^x(t_r)\)', \(x=\ell,a,d\), tells us what was
last integrated out over the interval \((t_{r-1},t_r]\). Given current
estimators \(\hat{\bm{Z}}_n^{\mm}\) for \(\bm{Z}\), we construct
estimators,
\begin{align*}
  \hat{Z}^{G^*}_{t_r,\mm}, \hat{Z}^{G^*}_{t_r,L(t_r),\mm},
  \hat{Z}^{G^*}_{t_r,N^{\ell}(t_r),\mm},\hat{Z}^{G^*}_{t_r,N^{a}(t_r),\mm},
  \hat{Z}^{G^*}_{t_r,N^{d}(t_r),\mm},
\end{align*}
for \eqref{eq:Z:tilde:1}--\eqref{eq:Z:tilde:d:1}. Moreover, given
estimators for \eqref{eq:Z:tilde:1}--\eqref{eq:Z:tilde:d:1}, we
construct estimators
\(\hat{h}^{\ell}_{t_r,\mm},\hat{h}^{a}_{t_r,\mm},\hat{h}^{d}_{t_r,\mm}\)
for the clever covariates \(h^{\ell}_{t_r},h^{a}_{t_r},h^{d}_{t_r}\)
by:
\begin{align*}
  \hat{h}^{\ell}_{t_r,\mm} & =  \sum_{\delta= 0,1} (2\delta-1) \hat{Z}^{G^*}_{t_r,L(t_r),\mm} ( N^{\ell}(t_r) = N^{\ell}(t_{r-1}) +
                         \delta),  \\
  \hat{h}^{a}_{t_r,\mm} & =  \sum_{\delta= 0,1} (2\delta-1) \hat{Z}^{G^*}_{t_r,N^{\ell}(t_r),\mm} ( N^{a}(t_r) = N^{a}(t_{r-1}) +
                      \delta) , \\
  \hat{h}^{d}_{t_r,\mm} & = 1-  \hat{Z}^{G^*}_{t_r,N^{a}(t_r),\mm} ( N^{d}(t_r) = 0)  .
\end{align*}
Now we can carry out the individual targeting update steps as
described in Sections \ref{sec:7:update:Z:L} and
\ref{sec:7:update:intensity}. This gives:
\begin{align*}
  &\hat{Z}^{G^*}_{t_r,L(t_r),k} &&\mapsto \hat{Z}^{G^*}_{t_r,L(t_r),k+1} \\
  & \hat{\Lambda}^{\ell}_{\mm} &&\mapsto \hat{\Lambda}^{\ell}_{\mm+1}  \qquad
                                  \qquad\qquad\qquad\qquad\qquad\qquad\qquad\qquad\qquad \\
  & \hat{\Lambda}^{a}_{\mm} &&\mapsto \hat{\Lambda}^{a}_{\mm+1}  \qquad
                               \qquad\qquad\qquad\qquad\qquad\qquad\qquad\qquad\qquad \\
  & \hat{\Lambda}^{d}_{\mm} &&\mapsto \hat{\Lambda}^{d}_{\mm+1}  \qquad
                               \qquad\qquad\qquad\qquad\qquad\qquad\qquad\qquad\qquad
\end{align*}
Starting with \(\hat{Z}^{G^*}_{t_r,L(t_r),\mm+1}\), we use
\(\hat{\Lambda}^{\ell}_{\mm+1}\) to integrate out \(N^{\ell}(t_r)\) to
obtain the updated
\(\hat{Z}^{G^*}_{t_r,N^{\ell}(t_r),\mm+1}\). Similarly, we use
\(\hat{\Lambda}^{a}_{\mm+1}\) to obtain the updated
\(\hat{Z}^{G^*}_{t_r,N^{a}(t_r),\mm+1}\) based on
\(\hat{Z}^{G^*}_{t_r,N^{\ell}(t_r),\mm+1}\) and lastly
\(\hat{\Lambda}^{d}_{\mm+1}\) to obtain
\(\hat{Z}^{G^*}_{t_r,N^{d}(t_r),\mm+1}\) based on
\(\hat{Z}^{G^*}_{t_r,N^{a}(t_r),\mm+1}\).

To proceed, recall that \(\hat{Z}^{G^*}_{t_r,N^{d}(t_r),\mm+1}\)
estimates \( \EE_{P^{G^*}} [ Y \, \vert \, \F_{t_{r-1}}] \), i.e.,
\begin{align*}
 \EE_{P^{G^*}} [ Y \, \vert \, N^{c}(t_{r-1}), A(t_{r-1}),
   L(t_{r-1}),
    N^{\ell}(t_{r-1}), N^{a}(t_{r-1}), N^{d}(t_{r-1}),  \F_{t_{r-2}}] . 
\end{align*}
The distributions of \(N^{c}(t_{r-1}), A(t_{r-1})\) are specified by
our intervention \(G^*\), so that we can now further obtain an updated
\(\hat{Z}^{G^*}_{t_{r-1},\mm+1}\) from
\(\hat{Z}^{G^*}_{t_r,N^{d}(t_r),\mm+1}\) by integrating out
\(N^{c}(t_{r-1}), A(t_{r-1})\) according to \(dG_t^*\) over
\((t_{r-2},t_{r-1}]\). For example, if our intervention imposes no
censoring and sets \(A(t)\) to \(a^*\) throughout (see Equation
\eqref{eq:static} in Section \ref{sec:interventions}), then we have:
\begin{align*}
  \hat{Z}^{G^*}_{t_{r-1},\mm+1} = \hat{Z}^{G^*}_{t_r,N^{d}(t_r),\mm+1}( N^{c}(t_{r-1})=0,
  A(t_{r-1})= a^*) .
\end{align*}
This means that we now have updated estimators:
\begin{align*}
  \hat{Z}^{G^*}_{t_r,\mm+1}, \hat{Z}^{G^*}_{t_r,L(t_r),\mm+1},
  \hat{Z}^{G^*}_{t_r,N^{\ell}(t_r),\mm+1},\hat{Z}^{G^*}_{t_r,N^{a}(t_r),\mm+1},
  \hat{Z}^{G^*}_{t_r,N^{d}(t_r),\mm+1},
\end{align*}
for the entire sequence
\eqref{eq:Z:tilde:1}--\eqref{eq:Z:tilde:d:1}. We can now proceed with
the next update \(\mm+1\) to \( \mm+2\) in the same way as outlined
above. \\

\newpage

\subsection{Overview: Super learning}
\label{app:sub:overview:super:learning}

In this section we briefly describe super learning
\citep{van2007super} adapted to our setting. The super learner is a
machine learning method also known as stacked regression
\citep{wolpert1992stacked,breiman1996stacked}. In the following, let
\(x\) run through \(\lbrace L, A, c, a, \ell, d \rbrace\). In Section
\ref{app:fk:loss} we outlined how all the nuisance parameters can be
represented by a function \(f^x\). Super learning for \(f^x\) involves
the following ingredients:
\begin{enumerate}
\item A collection of candidate estimators
  \(\lbrace \hat{f}^x_j \rbrace_{1 \le j\le J}\) for \(f^x\).
\item A loss function \((O, f^x) \mapsto \mathscr{L}_x(f^x) (O) \).
\end{enumerate}
In Section \ref{app:fk:loss} we defined log-likelihood loss functions
\((O, f^x) \mapsto \mathscr{L}_x(f^x) (O)\).  The super learner
identifies the best performing of the estimators in the collection
\(\lbrace \hat{f}^x_j\rbrace_{1 \le j\le J}\) by minimizing the
empirical risk obtained via \(V\)-fold cross-validation.  A \(V\)-fold
cross-validation scheme defines \(v=1,\ldots,V\) sample splits into a
training sample,
\( \lbrace 1, \ldots, n\rbrace \setminus \mathrm{Val}(v) \), used to
construct the estimator, and a validation sample,
\(\mathrm{Val}(v)\subset \lbrace 1, \ldots, n\rbrace \), used to
evaluate it.

Let \(\mathbb{P}^{v}_{n},\mathbb{P}^{-v}_{n}\) denote the empirical
distribution of the validation and the training sample,
respectively. Let \(\hat{f}^x_j ( \mathbb{P}^{-v}_{n})\) denote the
\(j\)th estimator obtained on the training sample. We define the loss
function based cross-validation selector as follows:
\begin{align*}
  \hat{j}^x_n = \underset{j \in \lbrace 1, \ldots, J\rbrace}{\mathrm{argmin}}
  \frac{1}{V} \sum_{v=1}^V \mathbb{P}^{v}_n \mathscr{L}_x ( \hat{f}^x_j ( \mathbb{P}^{-v}_{n})),
\end{align*}
which yields the super learner
\(\hat{f}^J_n = \hat{f}^x_{\hat{j}^x_n}\).

\newpage

\subsection{Preliminaries on HAL implementation}
\label{app:sub:overview:diagrams}

\quad

\begin{table}[!h]
\begin{center}
  \begin{tabular}{l l l}
    \hline
    \multicolumn{3}{l}{\textbf{HAL estimation using \texttt{R} software.}}\\ \hline\\[0.5em]
    \multicolumn{3}{l}{ \texttt{hal9001::fit\_hal(X, Y, family=family)}} \\[0.7em]
    & \multicolumn{2}{l}{generates a HAL design matrix consisting of basis functions corresponding}
    \\
    & \multicolumn{2}{l}{to covariates and interactions}
    \\[0.7em]
    & \multicolumn{2}{l}{makes a call to:}
    \\
    &\multicolumn{2}{l}{\texttt{glmnet::glmnet(x=X, y=Y, family=family)}} \\[0.7em]
    & \multicolumn{2}{l}{automatically selects a CV-optimal value of this regularization parameter:}
    \\
    &\multicolumn{2}{l}{\texttt{glmnet::cv.glmnet(x=X, y=Y, family=family)}} \\[0.7em]
    \\
    \multicolumn{3}{l}{\underline{Logistic regression:}}\\[0.7em]
    & \texttt{X} & HAL design matrix  \\
    & \texttt{Y} & outcome variable (factor with two levels) \\
    & \texttt{family} & \texttt{'binomial'} \\[0.7em]
    \\ 
    \multicolumn{3}{l}{\underline{Squared error loss:}} \\[0.7em]
    & \texttt{X} & HAL design matrix  \\
    & \texttt{Y} & outcome variable (real-valued) \\
    & \texttt{family} & \texttt{'gaussian'} \\[0.7em]
    \\
    \multicolumn{3}{l}{\underline{Intensity estimation with Poisson likelihood:}}\\[0.7em]
    & \texttt{X} & HAL design matrix  \\
    & \texttt{Y} & Number of events observed for each combination of
                   covariates  \\
    & \texttt{R} & The amount of risk time for each combination of covariates \\
    & &             \texttt{log(R)} is included as an offset in the regression formula   \\
    & \texttt{family} & \texttt{'poisson'} \\
    \\
    \multicolumn{3}{l}{\underline{Intensity estimation with partial likelihood:}}\\[0.7em]
    & \texttt{X} & HAL design matrix  \\
    & \texttt{Y} & outcome variable (two-column matrix with \texttt{'time'} and
                   \texttt{'status'}) \\
    & \texttt{family} & \texttt{'cox'} \\
    \\
    \hline
  \end{tabular}
  \end{center}\caption{}
  \label{table:hal:R}
\end{table}

\newpage

\section{Webappendix}

We here show the claim from Section \ref{ssec:tmle:cond:means} that:
\begin{equation*}
  \frac{d}{d\eps} \,\logloss{{\QQ}_{t}^{G^*}}\big({\QQ}_{t,L(t)}^{G^*}(\eps)\big)\,\Big\vert_{\eps=0} =
  {h}^{G^*}_t \, \big( {\QQ}^{G^*}_{t} - {\QQ}^{G^*}_{t,L(t)}  \big) , 
\end{equation*}
when
\begin{equation*}
  \logloss{{\QQ}_{t}^{G^*}}\big({\QQ}_{t,L(t)}^{G^*}\big) =
  {\QQ}^{G^*}_{t} \log {\QQ}^{G^*}_{t, L(t)} +
  \big(1- {\QQ}^{G^*}_{t} \big) \log \big( 1-  {\QQ}^{G^*}_{t, L(t)}\big) .
\end{equation*} 
and
\begin{equation*}
  \logit \,  {\QQ}^{G^*}_{t,L(t)}(\eps) =\logit\,
  {\QQ}^{G^*}_{t,L(t)} + \eps h_t^{G^*}. 
\end{equation*}

Recall that: 
\begin{align*}
  \expit (x ) = \frac{e^x}{1+e^x} = \frac{1}{1+e^{-x}},
  \qquad \logit (x) =  \log \left( \frac{x}{1-x} \right),
\end{align*}
such that
\begin{align*}
  \expit ( \logit(x)) = x,
\end{align*}
and
\begin{align*}
\expit ( - \logit (x) ) &= \expit(-\log(x) + \log(1-x)) \\
                          &= \expit( \log (1-x) - \log(x) ) \\
                          &= \expit (\logit (1-x)) = 1-x.
\end{align*}
Furthermore, it is easily seen that
\begin{align*}
  \log ( \expit ( x) ) &= - \log (1+ e^{-x}), \\
  \log ( 1-\expit ( x) ) &= - \log (1+ e^{x}), \\
\end{align*}
so that
\begin{align*}
  \frac{d}{d x}  \log ( \expit ( x) ) &= \frac{e^{-x}}{1+e^{-x}} =\expit(-x), \\
  \frac{d}{d x}  \log ( 1-\expit ( x) ) &= -\frac{e^{x}}{1+e^{x}} =- \expit(x). 
\end{align*}
Now we can differentiate the composite functions
\begin{align*}
  \frac{d}{d \eps}  \log( \expit (\logit ( x) + \eps h))
  & = h \, \expit (-(\logit ( x) + \eps h)) ,\\
  \frac{d}{d \eps}  \log(1- \expit (\logit ( x) + \eps h))
  & = -h \, \expit (\logit (x) + \eps h) , 
\end{align*}
where setting \(\eps = 0\) gives
\begin{align*}
  \frac{d}{d \eps} \log( \expit (\logit ( x) + \eps h))  \Big\vert_{\eps=0}
  & = h \, \expit (-(\logit ( x))) = 1-x ,\\
  \frac{d}{d \eps}   \log(1- \expit (\logit ( x) + \eps h))  \Big\vert_{\eps=0}
  & =- h \, \expit (\logit (x) ) = x . 
\end{align*}
Applying these steps to
\(\logloss{{\QQ}_{t}^{G^*}}\big({\QQ}_{t,L(t)}^{G^*}(\eps)\big)\) now
gives:
\begin{align*}
  \frac{d}{d\eps} \,\logloss{{\QQ}_{t}^{G^*}}\big({\QQ}_{t,L(t)}^{G^*}(\eps)\big)\,\Big\vert_{\eps=0}
  &=  {\QQ}^{G^*}_{t} h_t^{G^*} (1- {\QQ}^{G^*}_{t,L(t)})  - (1- {\QQ}^{G^*}_{t})
    h_t^{G^*}  {\QQ}^{G^*}_{t,L(t)} \\
  &=  {\QQ}^{G^*}_{t} h_t^{G^*}   -
    h_t^{G^*}  {\QQ}^{G^*}_{t,L(t)}  = h_t^{G^*} \big(  {\QQ}^{G^*}_{t}
    - {\QQ}^{G^*}_{t,L(t)} \big) , 
\end{align*}
which completes the proof. 
\newpage

\end{document}